\documentclass[12pt]{amsart}
\usepackage{amssymb}
\usepackage[all,cmtip]{xy}
\usepackage{fullpage}
\usepackage{graphicx}
\usepackage[usenames]{color}
\usepackage{etex,hyperref}
\usepackage{mathrsfs}
\usepackage{stmaryrd}
\usepackage{tikz}

\allowdisplaybreaks

\newcommand{\Cl}{\operatorname{\sf Cl}}
\newcommand{\da}{\downarrow}
\newcommand{\End}{\operatorname{\sf End}}
\newcommand{\ep}{\varepsilon}

\renewcommand{\ge}{\geqslant}
\newcommand{\Hom}{\operatorname{\sf Hom}}

\newcommand{\Ker}{\operatorname{\sf Ker}}
\renewcommand{\le}{\leqslant}
\newcommand{\Mat}{\mathsf{Mat}}

\newcommand{\Spec}{\operatorname{\sf Spec}}

\newcommand{\llb}{\llbracket}
\newcommand{\rrb}{\rrbracket}
\newcommand{\sym}[1]{\mathfrak{S}_{#1}}

\newcommand{\Pic}{\mathsf{Pic}}

\newcommand{\Inv}{\mathsf{Inv}}

\newcommand{\bA}{\mathbb{A}}

\newcommand{\bG}{\mathbb{G}}

\newcommand{\bX}{\mathbb{X}}
\newcommand{\bZ}{{\mathbb{Z}}}
\newcommand{\mcA}{\mathcal{A}}

\newcommand{\mcN}{\mathcal{N}}

\newcommand{\mfm}{\mathfrak{m}}
\newcommand{\mfp}{\mathfrak{p}}

\newcommand{\op}{{\mathrm{op}}}

\numberwithin{equation}{section}
\counterwithin{figure}{section}
\newtheorem{theorem}[equation]{Theorem}
\newtheorem{lemma}[equation]{Lemma}
\newtheorem{corollary}[equation]{Corollary}
\newtheorem{proposition}[equation]{Proposition}
\theoremstyle{definition}
\newtheorem{example}[equation]{Example}
\newtheorem{conjecture}[equation]{Conjecture}
\newtheorem{notation}[equation]{Notation}
\newtheorem{remark}[equation]{Remark}
\newtheorem{definition}[equation]{Definition}

\newif\ifGroebner
\Groebnerfalse

\title[Affine nilCoxeter algebras]{Simple modules for affine nilCoxeter algebras}
\author{David J. Benson}
\address[D. J. Benson]{Institute of Mathematics, University of Aberdeen, Aberdeen AB24 3UE,
United Kingdom.}
\email{d.j.benson@abdn.ac.uk}

\author{Kay Jin Lim}
\address[K. J. Lim]{Division of Mathematical Sciences, Nanyang
  Technological University, SPMS-04-01, 21 Nanyang Link, Singapore
  637371.} 
\email{limkj@ntu.edu.sg}

\begin{document}

\begin{abstract}
We study the representation theory of the affine nilCoxeter algebra
$A$ of type $\tilde A_{n-1}$, over a field $k$ of any characteristic. 
Our main theorem states that this is a
Noetherian prime affine PI algebra of PI degree $n!$. As a
consequence, the simple $A$-modules are all finite dimensional, and
the maximum dimension of a simple module is $n!$ over a suitable
finite extension of $k$.

To achieve this, we
investigate a large commutative subalgebra $C$ which is finitely
generated as an algebra and over which $A$ is finitely generated as a
module. We
show that the associated primes of $C$ are minimal primes, and there
are $n!$ of them, regularly permuted by $\sym{n}$. The algebra
$R=C^{\sym{n}}$ is equal to the centre of $A$, and isomorphic to $C/\mfp$ for each of the
minimal primes $\mfp$. 

We prove that the ring $R$ is isomorphic to
$k[X_1,\dots,X_{n-1}]^{\mu_n}$, where $\mu_n$ is the finite group
scheme of $n$th roots of unity, acting so that $X_i$ has degree $i$
modulo $n$. The ring $R$ is Cohen--Macaulay,
and is Gorenstein if and only if $n$ is odd or $n=2$. It is a toric ring, with
divisor class group $\Cl(R)\cong\bZ/n$, and every projective
$R$-module is free. 
\end{abstract}

\keywords{Affine nilCoxeter algebra, affine PI algebra, affine Weyl group, simple module,
symmetric group}

\subjclass[2020]{16S99 (primary), 05E10, 16R20, 20F55 (secondary)}
\maketitle

\section{Introduction}

The affine nilCoxeter algebra $\mcN(\tilde A_{n-1})$ has been studied
in~\cite{Bandlow/Schilling/Zabrocki:2011a,
Berg/Bergeron/Pon/Zabrocki:2012a,
Berg/Bergeron/Thomas/Zabrocki:2012a,
Berg/Saliola/Serrano:2014a,
Lam:2006a,
Lam:2008a,
Lee:2017a}. 
It is described by generators and relations
corresponding to the associated Coxeter group $W(\tilde
A_{n-1})\cong\bZ^{n-1}\rtimes\sym{n}$, except that instead of the
generators squaring to one, they square to zero (see
Section~\ref{se:N} for details). Our purpose here is to investigate the
representation theory of this algebra. In particular, we show that
the simple modules are all finite dimensional, and have central
characters associated with them. For each central character, there is
an associated finite dimensional algebra whose representation theory
is equivalent to the representations with that central character. For
a dense open subset of these, the algebra is simple.
Our main theorem is as follows.

\begin{theorem}
Let $A$ be an affine nilCoxeter algebra of type $\tilde A_{n-1}$. 
\begin{enumerate}
\item $A$ is a Noetherian affine PI algebra.
\item Every simple $A$-module is finite dimensional. 
\item $A$ is prime.
\item If $k$
is algebraically closed, then the 
maximum dimension of a simple module is $n!$. 
\end{enumerate}
\end{theorem}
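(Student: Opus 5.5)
The strategy is the one sketched in the abstract: find a large commutative subalgebra $C\subseteq A$ that is finitely generated as a $k$-algebra and over which $A$ is a finitely generated module. Concretely, $A$ has a basis $\{T_w\}$ indexed by $W(\tilde A_{n-1})\cong\bZ^{n-1}\rtimes\sym{n}$, with $T_wT_{w'}=T_{ww'}$ when lengths add and $0$ otherwise. I would take $C$ to be spanned by suitable elements attached to translations, for instance images of dominant translations or the cyclically decreasing elements $h_i$ from Lam's work. These are known to commute in the affine nilCoxeter algebra and generate an affine commutative algebra.

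To show that $A$ is finite over $C$, I would write each $w$ as a translation times a finite-group element. Then I would check that $T_w$ lies in the $C$-span of finitely many elements $T_x$, with $x$ running over bounded-length coset representatives, using a filtration by length. By the Artin--Tate/Noether argument, $A$ is then a finitely generated module over a finitely generated commutative algebra. This gives part (1): $C$ is Noetherian by Hilbert's basis theorem, so $A$ is Noetherian, and $A$ is PI because it is a finite module over a commutative ring. Part (2) is then the standard consequence for affine PI algebras (Kaplansky together with the Nullstellensatz for PI algebras): every simple module is finite dimensional over $k$.

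For primeness and the PI degree I would compute the centre. I would show that $\sym{n}$ acts on $C$, that the associated primes of $C$ are minimal, and that there are exactly $n!$ of them, permuted regularly by $\sym{n}$. I would also show $R=C^{\sym{n}}$ is the centre of $A$ and is a domain isomorphic to each $C/\mfp$. Localizing at $R\setminus\{0\}$ with fraction field $K$, I expect $A\otimes_R K\cong\Mat_{n!}(K)$. The reason is that $C\otimes_R K\cong K^{n!}$ is a product of fields, split by the $n!$ minimal primes into orthogonal idempotents permuted regularly by $\sym{n}$, and the $T_w$ for $w\in\sym{n}$ should act like a crossed product $K^{n!}\rtimes\sym{n}$, which is a full matrix algebra of size $n!$. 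I would then show that $A$ is torsion free over $R$, by checking that nonzero central elements are non-zero-divisors on the basis. With that, $A$ embeds in the simple algebra $\Mat_{n!}(K)$ as an $R$-order, hence is prime of PI degree $n!$, which is part (3).

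Part (4) then follows from Kaplansky's theorem together with Posner's theorem: over an algebraically closed field every simple module has dimension at most the PI degree $n!$. The bound is attained on the dense open Azumaya locus of $R$, which is nonempty because $A\otimes_R K$ is central simple of degree $n!$.

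The main obstacle is the structure of $C$: proving that there are exactly $n!$ minimal primes, that they are regularly permuted, and that $C/\mfp\cong R$. This requires a concrete presentation of $C$, presumably as a quotient of a polynomial ring by relations coming from $T_i^2=0$, and an explicit decomposition. I would attempt it by passing to fractions and exhibiting idempotents combinatorially, using the $\mu_n$-grading to identify $R$.
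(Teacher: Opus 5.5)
For (1) and (2) you follow the paper: $A$ is a finite module over a finitely generated commutative subalgebra (on both sides, via the word-reversing anti-automorphism), and McConnell--Robson does the rest. Three corrections, though.

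$C$ has to be the span of \emph{all} translation elements $T_t$, $t\in\bZ^{n-1}$. One chamber of translations spans a domain over which $A$ is not finite: for $n=2$ and $D=k[T_1T_0]$, no $(T_0T_1)^m$ lies in $T_1T_0A$ or in $AT_1T_0$. Lam's $h_i$ are not translation elements at all.

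Finiteness over $C$ does not follow from writing $w=t\pi$, because $T_tT_\pi=0$ unless lengths add. You need length-additive factorisations $w=w't$ with $w'$ from a finite set, as in Theorem~\ref{th:fg}.

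What you call the main obstacle is actually short. From $\ell(t)=\sum_{i<j}|r_i-r_j|$ one gets $T_tT_{t'}=T_{t+t'}$ when $t,t'$ admit a common weakly increasing ordering of coordinates, and $0$ otherwise (Corollary~\ref{co:ellZ}). So $C$ is the ring of the Weyl fan, and the $n!$ minimal primes, reducedness, the regular $\sym{n}$-action and $C/\mfp\cong R$ can be read off directly. No presentation or passage to fractions is needed.

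For (3) and the PI degree your route genuinely differs from the paper's. The paper proves primeness combinatorially. Lemma~\ref{le:primeC} pushes nonzero $x,y$ into $C$, nonzero modulo suitable minimal primes. Lemma~\ref{le:Aw''B}, the core of the new Theorem~\ref{th:ww''w'}, then supplies a single monomial $T_w$ with $xT_wy\ne0$. The PI degree comes from Artin--Procesi compared with the weight argument of Theorem~\ref{th:n!}, and $A\otimes_RK\cong\Mat_{n!}(K)$ appears only afterwards, via Posner. You instead compute the generic fibre $A_K=A\otimes_RK$ first, and get primeness from torsion-freeness plus simplicity of $A_K$. This works, and it bypasses Lemma~\ref{le:Aw''B}, Theorem~\ref{th:ww''w'} and the central-polynomial argument. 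The price is that you get no explicit witness $T_w$, and not the Weyl group result of independent interest. However, two of your stated mechanisms fail as written.

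First, the $T_w$ with $w\in\sym{n}$ cannot be the units of a crossed product. They are nilpotent ($T_{w_i}^2=0$), and $T_{w_i}e_\tau=0$ whenever the order $\tau$ puts $i$ before $i+1$ (Lemma~\ref{le:YiZv}). Here is what does work.
\begin{itemize}
\item With $C_K=C\otimes_RK$, one has $T_we_\tau=\bar w(e_\tau)T_w$, and $\sym{n}$ permutes the $e_\tau$ regularly. Hence $e_\sigma T_we_\sigma=0$ unless $\bar w=1$, so $e_\sigma A_Ke_\sigma=e_\sigma C_K=Ke_\sigma$.
\item Take $t$ strictly increasing in the order $\sigma$. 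Proposition~\ref{pr:w'w''} gives $T_{tw_i}T_{w_it}=T_{2t}=c\,e_\sigma$ and $T_{w_it}T_{tw_i}=T_{\bar w_i(2t)}=c\,\bar w_i(e_\sigma)$, where $c\in R\setminus\{0\}$ is the orbit sum of $T_{2t}$.
\item So all $e_\sigma$ are equivalent idempotents, and $A_K\cong\Mat_{n!}(K)$.
\end{itemize}

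Second, torsion-freeness cannot be checked on basis elements. The supports of $zT_w$ and $zT_{w'}$ can overlap when $w'w^{-1}$ is a translation. Instead, given $a\ne0$, do the following.
\begin{itemize}
\item Take $w_0$ in the support of $a$, and $x$ as in Corollary~\ref{co:w'''}, so that $w_0x\in\bZ^{n-1}$ with lengths adding.
\item Right multiplication by one monomial causes no cancellation (Lemma~\ref{le:cancel}). So the component $c'$ of $aT_x$ in $C$, for the grading by $\sym{n}$, is nonzero.
\item Hence $c'\notin\mfp$ for some minimal prime $\mfp$. Since $R\cap\mfp=0$, also $zc'\notin\mfp$. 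Thus $zaT_x\ne0$, so $za\ne0$.
\end{itemize}

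With these repairs, (3), the PI degree and (4) go through as you describe. The bound in (4) comes from the standard identity of degree $2\cdot n!$, and attainment comes from spreading out the Azumaya property of the generic fibre.
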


Part (1) is proved in Corollary~\ref{co:PI}, and part (2)
is a consequence, Theorem~\ref{th:fd}. Part (3) is proved in
Theorem~\ref{th:prime}.
We then investigate the representation theory of $A$ by introducing a
large commutative subalgebra $C$ that admits an action of
$\sym{n}$.

\begin{theorem}
The group $\sym{n}$ permutes the minimal primes of $C$ regularly, each
one corresponding to a total order of the set $\{1,\dots,n\}$.
The algebra $R=C^{\sym{n}}$ is the centre of $A$. For each minimal
prime $\mfp$ of $C$ there is a canonical isomorphism $C/\mfp\cong R$.
\end{theorem}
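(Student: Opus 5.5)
The plan is to identify $C$ with the span of the basis elements $T_{t_\lambda}$ of $A$ indexed by translations $t_\lambda\in W(\tilde A_{n-1})$, for $\lambda$ in the coroot lattice $\bZ^{n-1}$. I am assuming this is the subalgebra the paper has in mind; the argument below depends on it. The first step is the multiplication rule
\[ T_{t_\lambda}T_{t_\mu}=\begin{cases} T_{t_{\lambda+\mu}} & \text{if } \ell(t_\lambda)+\ell(t_\mu)=\ell(t_{\lambda+\mu}),\\ 0 & \text{otherwise,}\end{cases} \]
which is just the nilCoxeter rule $T_uT_v=T_{uv}$ or $0$. It then remains to check, from the standard formula $\ell(t_\lambda)=\sum_{\alpha>0}|\langle\lambda,\alpha\rangle|$, that lengths add exactly when $\lambda$ and $\mu$ lie in a common closed Weyl chamber. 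So $C$ is the semigroup ring of a fan. The fan is the union of the $n!$ chamber cones $\Lambda_\sigma$, one for each chamber $\sigma$, and each chamber corresponds to a total order of $\{1,\dots,n\}$ via the order of the coordinates of $\lambda$. Products of points not in a common cone are set to zero. The $\sym n$-action on $C$ is induced by the action on $\lambda$, and it permutes the chambers simply transitively.

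Next I would determine the minimal primes. For a chamber $\sigma$, let $\mfp_\sigma$ be the span of the $T_{t_\lambda}$ with $\lambda\notin\Lambda_\sigma$. This is an ideal: if $\lambda\notin\Lambda_\sigma$ and the product with $\mu$ is nonzero, then $\lambda+\mu\notin\Lambda_\sigma$, because a sum of two vectors in a common closed chamber lies in $\Lambda_\sigma$ only if both summands do. The quotient $C/\mfp_\sigma$ is the affine semigroup ring $k[\Lambda_\sigma]$, which is a domain, so $\mfp_\sigma$ is prime. Now let $\mfp$ be any prime. Choose a regular $\lambda_\sigma$ in the interior of each chamber. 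The product over all $\sigma$ of $T_{t_{\lambda_\sigma}}$ is $0$, since these vectors do not share a chamber, so some $T_{t_{\lambda_\sigma}}\in\mfp$. For any $\lambda\notin\Lambda_\sigma$ we have $T_{t_\lambda}T_{t_{\lambda_\sigma}}=0$, and hence $\mfp_\sigma\subseteq\mfp$. Therefore the minimal primes are exactly the $\mfp_\sigma$, and they are permuted regularly by $\sym n$. The intersection of the $\mfp_\sigma$ is $0$, which is the fact needed for the associated primes claimed in the abstract.

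For the canonical isomorphism I would use the fact that each $\sym n$-orbit in the lattice meets the closed chamber $\Lambda_\sigma$ in exactly one point. Since $T_{t_{w\lambda}}=w\cdot T_{t_\lambda}$, the invariants $R=C^{\sym n}$ have as a basis the orbit sums $m_\lambda=\sum_{\mu\in\sym n\lambda}T_{t_\mu}$. The composite $R\to C\to C/\mfp_\sigma$ sends $m_\lambda$ to $T_{t_{\lambda^\sigma}}$, where $\lambda^\sigma$ is the representative of the orbit in $\Lambda_\sigma$. This map is bijective on bases. To see that it is multiplicative, compute $m_\lambda m_\mu$: only pairs in a common chamber contribute. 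One checks that the coefficient of $T_{t_{\nu}}$ for $\nu\in\Lambda_\sigma$ is $1$ precisely when $\nu=\lambda^\sigma+\mu^\sigma$, using that points in a common closed chamber are simultaneously conjugate into $\Lambda_\sigma$. Canonicity and independence of $\sigma$ come from conjugating by the group.

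The hardest part is the identification of $R$ with the centre $Z(A)$. One inclusion is to show that each orbit sum $m_\lambda$ commutes with every generator $u_i$, including $u_0$. I would do this by expanding $u_im_\lambda$ and $m_\lambda u_i$ in the basis $T_w$. For $i\neq0$, left and right multiplication by $u_i$ pairs $t_\mu$ with $s_it_\mu=t_{s_i\mu}s_i$ in a length-compatible way. The affine generator $u_0$ needs separate care: I would either use the Dynkin diagram rotation automorphism, or write $s_0=t_{\theta^\vee}s_\theta$. For the converse, I would first show that the centralizer of $C$ in $A$ is $C$ itself. The idea is to write $a=\sum c_{w}T_{t_\lambda w}$ with $w\in\sym n$ and multiply by $T_{t_\mu}$ for $\mu$ deep in chambers, which forces $w=1$. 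A central element then lies in $C$ and commutes with each $u_i$. Comparing coefficients should force invariance under each $s_i$, giving $Z(A)\subseteq R$. I expect these coefficient comparisons, especially those involving $u_0$, to be the main technical obstacle.
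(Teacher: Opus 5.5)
Your step showing that every prime $\mfp$ contains some $\mfp_\sigma$ runs the implication backwards. From $\prod_\sigma T_{t_{\lambda_\sigma}}=0$ you correctly get some $T_{t_{\lambda_\sigma}}\in\mfp$. But then $T_{t_\lambda}T_{t_{\lambda_\sigma}}=0\in\mfp$ tells you nothing about $T_{t_\lambda}$. To conclude $T_{t_\lambda}\in\mfp$ you would need $T_{t_{\lambda_\sigma}}\notin\mfp$.

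The inference is genuinely invalid, not merely unjustified. For $\mfp=\mfp_\tau$ and any $\sigma\ne\tau$ one has $T_{t_{\lambda_\sigma}}\in\mfp_\tau$, so your reasoning would yield $\mfp_\sigma\subseteq\mfp_\tau$, which is false. Nor can you rescue it by choosing $\sigma$ with $T_{t_{\lambda_\sigma}}\notin\mfp$: for $\mfp=\mfm_0$ every $T_{t_{\lambda_\sigma}}$ lies in $\mfp$.

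The correct argument uses a fact you only mention afterwards. The $\mfp_\sigma$ are spanned by monomials, and every lattice point lies in some closed chamber, so $\bigcap_\sigma\mfp_\sigma=0$. Hence $\prod_\sigma\mfp_\sigma=0\subseteq\mfp$, and primality gives $\mfp_\sigma\subseteq\mfp$ for some $\sigma$. You should also record that the $\mfp_\sigma$ are pairwise incomparable, so that each is minimal: $\mfp_\sigma\subseteq\mfp_\tau$ forces $\Lambda_\tau\subseteq\Lambda_\sigma$, which is impossible for $\sigma\ne\tau$ by looking at a regular point of $\Lambda_\tau$.

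With these repairs, your treatment is a clean direct alternative to Proposition~\ref{pr:minimal-primes}. The paper instead realises the $\mfp_\sigma$ as annihilators of regular monomials $Z_v$ and appeals to associated-prime theory. Your ``face'' argument that $\mfp_\sigma$ is an ideal is correct and sidesteps that machinery.

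The rest follows the paper's route (Theorems~\ref{th:Zvhat} and~\ref{th:centre}) and goes through, with two simplifications.

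First, $R\to C\to C/\mfp_\sigma$ is a composite of ring homomorphisms, so only bijectivity needs checking; the computation of $m_\lambda m_\mu$ is superfluous.

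Second, $u_0$ needs no special treatment. For every $0\le i\le n-1$ one has $w_it_v=t_{\bar w_i(v)}w_i$ in the group and $\ell(t_{\bar w_i(v)})=\ell(t_v)$, so $Y_iZ_v=Z_{\bar w_i(v)}Y_i$ uniformly (Lemma~\ref{le:YiZv}). This gives $R\subseteq Z(A)$ at once. For $z=\sum_vc_vZ_v$ commuting with $Y_i$, it gives $0=\sum_v(c_{\bar w_i(v)}-c_v)Z_vY_i$. The nonzero products $Z_vY_i=Y_{vw_i}$ are distinct basis elements, so $c_{\bar w_i(v)}=c_v$ whenever $Z_vY_i\ne0$. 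Applying this to $\bar w_i(v)$ as well gives it for all $v$.

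Your centralizer step is also the paper's, and it works as follows. Take a term $T_{t_\lambda w}$ of $a$ with $w\ne1$, and write $t_\lambda w=[s_1,\dots,s_n]$. Choose $\mu$ with distinct coordinates such that $\mu_{s_i}<\mu_{s_j}$ (indices mod $n$) whenever $s_i<s_j$. Then $T_{t_\mu}T_{t_\lambda w}\ne0$ by Theorem~\ref{th:ell5}(c). Comparing coefficients of $T_{t_{\lambda+\mu}w}$ in $T_{t_\mu}a=aT_{t_\mu}$ shows that $T_{t_{\lambda+\mu-w(\mu)}w}$ also occurs in $a$. Since $w(\mu)\ne\mu$, replacing $\mu$ by $N\mu$ for $N=1,2,\dots$ contradicts finiteness of the support.
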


\begin{theorem}
The centre $R=C^{\sym{n}}$ of $A$ is isomorphic to
$k[X_1,\dots,X_{n-1}]^{\mu_n}$, the fixed points of the finite group
scheme $\mu_n$ on the polynomial ring. It is 
a normal integral domain of Krull
dimension $n-1$. It is Cohen--Macaulay, and it is Gorenstein if and
only if $n$ is odd or $n=2$. For $n\ge 3$, it is a
toric ring with divisor class group $\Cl(R)\cong \bZ/n$, every projective $R$-module is free, and Picard group is equal to zero.
\end{theorem}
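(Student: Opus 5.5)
Taking the earlier identification of $R$ with $C/\mfp$ for a minimal prime $\mfp$ (previous theorem) as given, my first step is an explicit monomial presentation of $R$. The structure of $C/\mfp$ for the total order $1<2<\dots<n$ should give $R$ as the subalgebra of $k[X_1,\dots,X_{n-1}]$ spanned by the monomials $X_1^{a_1}\cdots X_{n-1}^{a_{n-1}}$ with $\sum_i i a_i\equiv 0\pmod n$. This is exactly the invariant ring $k[X_1,\dots,X_{n-1}]^{\mu_n}$. Here $\mu_n=\Spec k[t]/(t^n-1)$ acts through the $\bZ/n$-grading with $X_i$ in degree $i$. Phrasing it with the group scheme $\mu_n$ rather than the group of roots of unity makes the statement valid in every characteristic, including $p\mid n$, since invariants of $\mu_n$ are just the degree-$0$ part of a $\bZ/n$-grading. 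I would establish this isomorphism by matching the explicit generators of $C/\mfp$ found earlier with the monomials of degree $\equiv 0$. I expect this to be the main obstacle, as it depends on the exact form of $C$ and of its minimal primes. Concretely, I would produce a $k$-basis of $C/\mfp$ indexed by lattice points of the cone $\sigma^\vee\cap M$ described below, and check that multiplication is addition of exponents.

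Everything else follows from toric and invariant theory. Let $M=\{a\in\bZ^{n-1}:\sum i a_i\equiv 0 \bmod n\}$, a lattice of index $n$ in $\bZ^{n-1}$. Then $R=k[\mathbb{N}^{n-1}\cap M]$ is the affine semigroup ring of a saturated semigroup, namely the intersection of the lattice $M$ with the cone $\mathbb{R}_{\ge0}^{n-1}$. By Hochster's theorem it is a normal Cohen--Macaulay domain. Its Krull dimension is $n-1$ because $R\subseteq k[X]$ is integral ($X_i^n\in R$) and has the same fraction field rank. Normality can also be seen directly, since $R$ is the degree-$0$ part of a graded normal domain.

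For the class group, the cone has $n-1$ rays, with primitive dual vectors $e_i^*$ restricted to $M$. The standard exact sequence $0\to M\to\bZ^{n-1}\to\Cl(R)\to 0$, valid when no ray is redundant, gives $\Cl(R)\cong\bZ^{n-1}/M\cong\bZ/n$. For $n\ge3$ the map $M\to\bZ^{n-1}$, $m\mapsto(\langle m,e_i\rangle)_i$, is the inclusion, and its cokernel is $\bZ/n$ via $a\mapsto\sum ia_i$. For $n=2$, $R=k[X_1^2]$ is a polynomial ring, which is why that case is excluded.

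For the Gorenstein criterion I would use the Stanley/Danilov description of the canonical module: $\omega_R$ is spanned by the monomials in the interior, i.e.\ all $a_i\ge1$, with degree $\equiv 0$. $R$ is Gorenstein iff $\omega_R$ is principal, iff the class of the canonical divisor in $\Cl(R)$ is trivial. That class is the image of $(1,\dots,1)$, namely $\sum_{i=1}^{n-1} i=n(n-1)/2 \bmod n$. This is $0$ exactly when $n$ is odd, and also for $n=2$ where $R$ is regular. For even $n\ge4$ it equals $n/2\ne0$.

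Finally, $R$ is positively graded, with $\deg X_i=i$ say, and $R_0=k$. Finitely generated projective graded modules over such a ring are free. For arbitrary projectives I would invoke Gubeladze's theorem that projective modules over normal affine monoid rings over a field are free. In particular $\Pic(R)=0$.
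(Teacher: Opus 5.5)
\textbf{Overall verdict.} Your proposal is correct. It follows the paper for the toric half but takes a different route for the Cohen--Macaulay and Gorenstein parts.

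\textbf{Where you agree with the paper.} The class group via the exact sequence $0\to M\to\bZ^{n-1}\to\Cl(R)\to 0$, and Gubeladze's theorem for projectives (hence $\Pic(R)=0$), are what the paper does, citing Cox--Little--Schenck and Bruns--Gubeladze. Your Krull dimension argument, integrality of $k[X]$ over $R$ because $X_i^n\in R$, is the paper's Noether normalisation by $X_1^n,\dots,X_{n-1}^n$ in other words. In the class group step you should say why $e_i^*|_M$ is primitive in $\Hom(M,\bZ)$. For $n\ge 3$ some $j\ne i$ in $\{1,\dots,n-1\}$ is a unit mod $n$, so $M$ contains a vector with $i$th coordinate $1$. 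This fails for $n=2$, which is exactly why $\Cl=0$ there.

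\textbf{Where you differ.} For Cohen--Macaulayness, the paper applies the Hochster--Roberts theorem to the diagonalisable group scheme $\mu_n$. For the Gorenstein criterion, the paper first assumes $k$ has $n$ distinct $n$th roots of unity and applies Watanabe's theorem: there are no pseudoreflections for $n\ge 3$, and the determinant is $(-1)^{n-1}$. It then transfers to arbitrary $k$ via Stanley's criterion, which reads Gorensteinness of a graded Cohen--Macaulay domain off its field-independent Hilbert series.

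You stay inside the theory of normal monoid rings instead. Hochster's theorem on normal semigroup rings gives Cohen--Macaulayness. The Danilov--Stanley description of $\omega_R$ by interior lattice points reduces Gorensteinness to whether $(1,\dots,1)\in M$, i.e.\ whether $n\mid n(n-1)/2$.

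\textbf{What each route buys.} Your route is uniform in the characteristic. It needs no roots of unity and no transfer step, and it avoids applying Hochster--Roberts to the non-reduced $\mu_n$ when $p\mid n$. It also explains the parity condition conceptually, as triviality of the canonical class in $\Cl(R)\cong\bZ/n$. The paper's route instead ties the result to classical invariant theory of $\bZ/n$.

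\textbf{The step you left as a plan.} You flag the identification of $R$ with the monomial ring as the main obstacle, but it is short. Since $C/\mfp_0$ has basis the $Z_v$ with $r_1\le\dots\le r_n$ and $Z_vZ_{v'}=Z_{v+v'}$, the paper simply sends $\hat Z_v$ to
\[ X_1^{r_2-r_1}\cdots X_{n-1}^{r_n-r_{n-1}}. \]
This map is multiplicative, and
\[ \sum_i i(r_{i+1}-r_i)=nr_n-\sum_i r_i=nr_n. \]
That identity shows the image has degree $0$ mod $n$. It also recovers $v$ from the differences: set $r_n=\frac1n\sum_i i m_i$ and subtract successively. So the map is a monoid isomorphism onto $\mathbb{N}^{n-1}\cap M$.

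Make sure you match bases, i.e.\ the two monoids, as in your ``concretely'' sentence. Matching generators alone, as your first phrasing suggests, would give no control over relations.
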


For the proof of this theorem, see Section~\ref{se:R}.\bigskip\pagebreak[3]

Given a simple $A$-module, by Schur's lemma the action of the centre
$R$ factors through a field $\tilde k=R/\mfm R$, where $\mfm$ is a
maximal ideal and $\tilde k$ is a finite extension of $k$.
We show that for a dense open subset of the closed points $\mfm$ in
$\Spec(R)$, there is just one simple module for $A/\mfm A$, and its
dimension is $n!$ over $\tilde k$. Thus we have 
\[ A/\mfm A\cong\Mat_{n!}(\tilde k). \]
This is the maximum dimension of a simple $A$-module.

We give more complete information in the cases of $\tilde A_1$ and
$\tilde A_2$. In the case of $\tilde A_1$ the simple modules have
dimension one and two over $\tilde k$, while in the case of $\tilde
A_2$ they have dimensions one, three and six. For details, see
Sections~\ref{se:A1tilde} and~\ref{se:A2tilde}.\bigskip

The structure of the paper is as follows. In Section~\ref{se:W}, we
discuss the affine Weyl group of type $\tilde A_{n-1}$. We prove 
Theorem~\ref{th:ww''w'},
which is used in the proof that $\mcN(\tilde
A_{n-1})$ is prime. This seems to be an interesting new theorem about
affine Weyl groups, and is as follows.

\begin{theorem}
Given $w$ and $w'\in W(\tilde A_{n-1})$, there exists $w''$ such that
the length of $ww''w'$ is the sum of the lengths of $w$, $w''$ and $w'$.
\end{theorem}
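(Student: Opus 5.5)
Our approach is to use the characterisation of length in an affine Weyl group by hyperplane separation. Realise $W=W(\tilde A_{n-1})$ as the group generated by reflections in the affine hyperplanes $H_{\alpha,m}=\{x:\langle\alpha,x\rangle=m\}$, for $\alpha$ a positive root of $A_{n-1}$ and $m\in\bZ$, acting simply transitively on alcoves. Then $\ell(w)$ is the number of hyperplanes separating the fundamental alcove $A_0$ from $wA_0$. Hence $\ell(xy)=\ell(x)+\ell(y)$ holds if and only if no hyperplane separating $A_0$ from $yA_0$ also separates $yA_0$ from $xyA_0$. Equivalently, the set of hyperplanes $N(y^{-1})$ is disjoint from $y\cdot N(x^{-1})$, suitably interpreted.

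The key steps are as follows.

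\begin{enumerate}
\item Reduce to a statement about "directions". For each positive root $\alpha$, the hyperplanes separating $A_0$ from $wA_0$ in the family parallel to $H_{\alpha,0}$ all lie on one side of $A_0$. They are counted with a sign, recording whether they lie on the positive or the negative side.

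\item Recall the standard fact that $\ell(xy)=\ell(x)+\ell(y)$ whenever, for every $\alpha$, the separating hyperplanes in the $\alpha$-family for the two pieces "go in the same direction". In other words, the walk from $A_0$ to $yA_0$ to $xyA_0$ never crosses any $\alpha$-family hyperplane twice.

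\item Choose $w''$ to be a translation $t_\lambda$ by a sufficiently large dominant-regular coweight, possibly conjugated or composed with a finite Weyl group element, so as to "straighten" the directions. Concretely, write $w'A_0$ and $w^{-1}A_0$ as alcoves. We want $w''$ to carry $w'A_0$ deep into a Weyl chamber $C$. We also want $w''$ to be such that $w''w'A_0$ lies, relative to the alcove $w''A_0$, in a position compatible with the first step, and such that going on to $ww''w'A_0$ only crosses hyperplanes moving further away.

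\item The cleanest formulation is two-sided. Pick a Weyl chamber $C_1$ such that the path $w'$ is compatible with moving into $C_1$. Pick a chamber $C_2$ such that $w^{-1}$ is compatible with moving into $C_2$. Then take $w''=u\,t_{N\lambda}\,v$ for a regular dominant $\lambda$, suitable finite Weyl group elements $u,v$, and $N\gg0$.
\end{enumerate}

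For the verification, we would use the known fact, from Iwahori--Matsumoto, that $\ell(t_\lambda v)$ and $\ell(u t_\lambda)$ are given by explicit formulas $\sum_{\alpha>0}|\langle\alpha,\lambda\rangle+\epsilon|$. For large $N$ every $\langle\alpha,N\lambda\rangle$ dominates the bounded contributions of $w$ and $w'$. Additivity then amounts to checking that the absolute values add without cancellation. This holds because all terms for each $\alpha$ have the same sign once the orientations $u,v$ are chosen to match the sign patterns of $w$ and $w'$.

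The main obstacle is choosing $u$ and $v$ so that, for every root $\alpha$ simultaneously, the signs coming from $w'$ agree with those of $w''$, and those of $w''$ agree with those of $w$. A priori, $w'$ might require the $\alpha$-walk to go in the positive direction for some roots and the negative direction for others, in a pattern not realised by a single chamber. We would first try the following. The hyperplanes crossed by $w'$, read as a walk starting at $A_0$, end at the alcove $w'A_0$. We take $v$ so that $v^{-1}$ sends $w'A_0$ into the chamber containing it, so that continuing straight outward in that chamber only increases the distance from $A_0$. We treat $w$ symmetrically, via $w^{-1}$. The real content is then that extending a reduced walk to infinity within the chamber of its endpoint stays reduced, which is a standard convexity property of alcoves.
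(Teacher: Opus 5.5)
Your argument is correct in substance, but it takes a different route from the paper. The paper works entirely in window notation for type $\tilde A_{n-1}$, in three steps:
\begin{enumerate}
\item Corollary~\ref{co:w'''} (via Theorem~\ref{th:ell5}\,(b)) extends $w$ on the right and $w'$ on the left to translations $wa,\ bw'\in\bZ^{n-1}$ with distinct coordinates. This is the combinatorial form of ``pushing deep into an open chamber'', with the chamber recorded as a total order on the coordinates.
\item Lemma~\ref{le:Aw''B} writes down, in window notation, an explicit element $c$ connecting two translations with prescribed orders.
\item It sets $w''=acb$ and finishes with a triangle-inequality sandwich.
\end{enumerate}
You instead build $w''=u\,t_{N\lambda}\,v$ in one step. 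You reduce additivity to monotonicity of the alcove walk $A_0\to wA_0\to ww''A_0\to ww''w'A_0$ in every $\alpha$-coordinate. Your ``main obstacle'' is dissolved by the fact you invoke: the crossing directions of a walk $A_0\to xA_0$ are the signs of the roots on the open chamber containing $xA_0$. Explicitly, let $D\ni wA_0$ and $D'\ni w'A_0$, with $\sigma_DC_+=D$ and $\sigma_{D'}C_+=D'$. Then $u=\bar w^{-1}\sigma_D$ and $v=\sigma_{D'}^{-1}$ work. The middle displacement is $N\bar wu\lambda+O(1)$, whose directions are those of $D$, and the transported $w'$-walk has the directions of $\bar wuvD'=D$. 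Your route never uses type $A$, so it proves the statement for every affine Weyl group and explains why it holds. The paper's route is elementary and explicit. Its Lemma~\ref{le:Aw''B} is also shaped to give one $w''$ valid simultaneously for whole families $\Omega,\Omega'$ of translations, together with the conjugation bookkeeping, and this is exactly what Theorem~\ref{th:prime} uses. Your construction would give this too, since $u,v$ depend only on the chambers and the required size of $N$ is uniform.

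A few details need fixing.
\begin{itemize}
\item With the left action, the criterion in your first paragraph should say that no hyperplane separating $A_0$ from $xA_0$ also separates $xA_0$ from $xyA_0$, i.e.\ $N(x)\cap xN(y)=\emptyset$. As written, with $yA_0$ in the middle, it characterises $\ell(xy)=\ell(y)+\ell(y^{-1}xy)$ instead.
\item The phrase ``$v^{-1}$ sends $w'A_0$ into the chamber containing it'' should read ``$v^{-1}$ sends $C_+$ to the chamber containing $w'A_0$''.
\item If you choose $u$ symmetrically via $w^{-1}$, remember that inverting $w''$ turns $t_{N\lambda}$ into $t_{-N\lambda}$. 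The right choice is then $u=\sigma_{D^*}\omega_0$, where $D^*\ni w^{-1}A_0$ and $\omega_0$ is the longest element of $\sym{n}$. The naive choice $u=\sigma_{D^*}$ fails as soon as $w\ne 1$.
\item Take $N\lambda$ in the sum-zero lattice $\bZ^{n-1}$ (e.g.\ $N$ divisible by $n$), so that $w''$ lies in $W(\tilde A_{n-1})$ rather than in the extended affine Weyl group.
\end{itemize}
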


In Section~\ref{se:N} we introduce the affine nilCoxeter algebra, and
in Section~\ref{se:C} we discuss a large commutative subalgebra $C$
whose characters will give us a theory of weights for $\mcN(\tilde
A_{n-1})$-modules. This admits an action of $\sym{n}$ that regularly
permutes its minimal primes, and in Section~\ref{se:centre} we show that
$R=C^{\sym{n}}$ is the centre of $\mcN(\tilde A_{n-1})$. The ring
theory for $R$ is discussed in Section~\ref{se:R}. In
Section~\ref{se:Amodules} we develop the theory of weights for
$\mcN(\tilde A_{n-1})$-modules, and use this to prove that it is a
prime affine PI algebra of PI degree $n!$. Finally, in the last few
sections we give some examples of small rank to illustrate the theory.
These computations lead us to the following conjecture,
which we verify in the cases of $\tilde A_{n-1}$ for $n=2$, $3$ and $4$.

\begin{conjecture}\label{conj:multinomial}
Let $A$ be a nilCoxeter algebra of type $\tilde A_{n-1}$ with $n\ge 2$. 
Over an algebraically closed
field of characteristic zero, the weight spaces of
simple $A$-modules are one dimensional. This implies that the
dimension of a simple module is a
multinomial coefficient 
$\binom{n}{n_1\dots n_s}=\frac{n!}{n_1!\dots n_s!}$, 
where the stabilisers of the 
corresponding weight spaces  are the conjugates of the Young subgroup
$\sym{n_1}\times\dots\times\sym{n_s}$, see Lemma~\ref{le:stab(p)}.
\end{conjecture}

\noindent
{\bf Acknowledgements.} We would like to thank Matthew Dyer and Toby
Stafford for helpful email exchanges related to this work. We would
also like to acknowledge that we performed many helpful computations using the
computational algebra system {\sc Magma}~\cite{Bosma/Cannon/Playoust:1997a}
in the course of this work. We also acknowledge ChatGPT Pro for finding (minor) inaccuracies in a previous version of the paper. The second author is supported by Singapore Ministry of Education AcRF Tier 2 grant MOE-T2EP20225-0003.

\section{\texorpdfstring{The affine Weyl group $W(\tilde A_{n-1})$}
{The affine Weyl group W(Ãₙ₋₁)}}\label{se:W}

In this section, we summarise the properties of the affine Weyl group
that we will be using.
Let $W(\tilde A_{n-1})$ be the affine Weyl group of type $\tilde
A_{n-1}$. 
This has generators
$w_i$, $0\le i\le n-1$, and relations $w_i^2=1$,
$w_iw_{i+1}w_i=w_{i+1}w_iw_{i+1}$, $w_iw_j=w_jw_i$ if $|i-j|\ge 2$,
and where the indices in these relations are to be read modulo
$n$. A reference for affine reflection groups is 
Humphreys~\cite{Humphreys:1990a}. 

Lusztig~\cite[\S3.6]{Lusztig:1983a} introduced the 
permutation representation of $W(\tilde A_{n-1})$ on $\bZ$, as
those permutations $w\colon\bZ\to\bZ$ with the properties that 
\begin{enumerate}
\item[(L.1)] $w(i+n)=w(i)+n$
\item[(L.2)] $\sum_{i=1}^{n}( w(i)-i)=0$.
\end{enumerate}
This permutation representation is also studied in
Shi~\cite[chapter~4]{Shi:1986a}, 
Bj\"orner and Brenti~\cite{Bjorner/Brenti:1996a}, 
Ehrenborg and Readdy~\cite[\S4]{Ehrenborg/Readdy:1996a}, 
Eriksson and Eriksson~\cite{Eriksson/Eriksson:1998a}, 
Xi~\cite[\S2.1]{Xi:2002a},
Clark and Ehrenborg~\cite{Clark/Ehrenborg:2011a}.
In this permutation representation, we have $w_i\mapsto (i+jn,i+jn+1)$ for  
$0\le i\le n-1$.  
\emph{Window notation} for an element $w\in W(\tilde A_{n-1})$ is
$w = [w(1),w(2),\dots,w(n)]$. So the generator
$w_i$ in window notation is $[1,2,\dots,i-1,i+1,i,i+2,\dots,n]$ for
$1\le i\le n-1$, and $w_0=[0,2,3,\dots,n-1,n+1]$.

The \emph{length} of an element $w\in W(\tilde A_{n-1})$ is the
shortest length of a word in the generators $w_i$ representing
$w$. 
A \emph{minimal word} for $w$ is a word of length $\ell(w)$
representing $w$.

\begin{definition}
For $1\le i\le n$, we define  $\Inv_i(w)$ to be the number of $j>i$ such  
that $w(j)<w(i)$ in the permutation representation on $\bZ$.  
\end{definition}

\begin{lemma}\label{le:ell1}
The length of $w\in W(\tilde A_{n-1})$ is equal to
\begin{align*} 
\ell(w)
&=\sum_{i=1}^n \Inv_i(w)
=|\{(i,j)\in\bZ^2\mid 1\le i\le  n,\ j>i\text{ and }
w(j)<w(i)\}|.
\end{align*}
\end{lemma}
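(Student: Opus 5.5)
We prove the formula by the standard argument for Coxeter groups with an explicit inversion statistic. Let $N(w)$ denote the right-hand side, the number of pairs $(i,j)$ with $1\le i\le n$, $j>i$ and $w(j)<w(i)$.

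First we check that $N(w)$ is finite and that both descriptions in the statement agree. By (L.1), $w(j)-j$ is periodic modulo $n$ and hence bounded. Therefore for fixed $i$ only finitely many $j>i$ satisfy $w(j)<w(i)$, so each $\Inv_i(w)$ is finite, and summing over $i=1,\dots,n$ gives the set count.

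The key step is to compute the effect of right multiplication by a generator. Take $s=w_r$ and compare $N(ws)$ with $N(w)$. Right multiplication by $s$ swaps the values in positions $r+tn$ and $r+tn+1$ for every $t$. Pairs of positions $\{p,q\}$ with $p<q$ are partitioned into orbits under translation by $n$, and $N(w)$ counts the translation classes of inverted pairs; each class has a unique representative with $1\le p\le n$.

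Swapping adjacent positions $r+tn,\,r+tn+1$ is a bijection on the other pairs. A pair $\{a,r+tn\}$ with $a$ outside the swapped pair is sent to $\{a,r+tn+1\}$, and since $a$ lies on the same side of both positions, its inversion status is preserved. The only pair whose status changes is the class of $\{r,r+1\}$ itself. Hence
\[
N(ws)=N(w)\pm1,
\]
with $+1$ exactly when $w(r)<w(r+1)$. We have to be careful with $r=0$, where the swapped positions are $0$ and $1$, so the representative pair is $(0,1)\sim(n,n+1)$. We also need that distinct translates $\{r+tn,r+tn+1\}$ are disjoint, which holds since $n\ge2$.

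From this step two consequences follow. Since $N(1)=0$, induction on word length gives $N(w)\le \ell(w)$. For the reverse inequality we show that if $N(w)>0$ then some $r$ has $w(r)>w(r+1)$ (a descent). Then $N(ww_r)=N(w)-1$, and induction yields a word of length $N(w)$ for $w$, so $\ell(w)\le N(w)$.

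The existence of a descent is the main obstacle. Suppose $w(r)<w(r+1)$ for all $r$. Then $w$ is increasing on $\bZ$. An increasing bijection of $\bZ$ commuting with $+n$ is a translation $i\mapsto i+c$, and (L.2) forces $c=0$, so $w=1$ and $N(w)=0$. This is the contrapositive we need, which completes the induction.

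Finally, the bijection $w_i\mapsto(i+jn,i+jn+1)$ from Lusztig's representation, which we take as known, ensures that words in the $w_i$ and these permutations correspond. This gives $\ell(w)=N(w)$ as claimed.
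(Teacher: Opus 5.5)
Your proof is correct, but the paper does not argue this lemma at all: it simply cites Eriksson--Eriksson (Theorem~15), Bj\"orner--Brenti (Proposition~4.1(ii)) and Xi (\S2.1.3). What you give is essentially the standard argument from that literature, made self-contained. It has three steps:
\begin{enumerate}
\item Show that $N(ww_r)=N(w)\pm1$, with the sign determined by whether $w(r)<w(r+1)$.
\item Deduce $N(w)\le\ell(w)$ by induction on word length.
\item Get $\ell(w)\le N(w)$ by repeatedly multiplying on the right by a generator $w_r$ at a descent $w(r)>w(r+1)$. This uses the fact that a descent-free element is an increasing bijection of $\bZ$, hence a translation, hence the identity by (L.2).
\end{enumerate}

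Compared with the citation, your route buys two things. It makes the lemma independent of the references. As a by-product, it shows that $w_0,\dots,w_{n-1}$ generate the whole group of permutations satisfying (L.1) and (L.2).

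It does not give the faithfulness of Lusztig's representation. Faithfulness is needed to identify the Coxeter length in the abstract group with word length in the permutation group. You correctly take it as known, exactly as the paper does, but it is a separate input. The base case ($N(w)=0$ forces $w=1$) is only implicit in what you wrote. It does follow, because a descent at $r$ is itself a counted inversion, represented by $(r,r+1)$, or by $(n,n+1)$ when $r=0$.

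One step is stated imprecisely. You say a pair $\{a,r+tn\}$ with $a$ outside the swapped pair is sent to $\{a,r+tn+1\}$. That is false when $a$ lies in a different translate $\{r+t'n,\,r+t'n+1\}$, since then $a$ moves too and the pair goes to $\{w_r(a),\,r+tn+1\}$.

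The correct justification is this. The blocks of $w_r$ are the swapped pairs $\{r+tn,r+tn+1\}$ and the fixed points, and all of them are intervals, disjoint because $n\ge 2$. So $w_r$ preserves the order of any two positions lying in different blocks. Hence for every pair $\{p,q\}$ other than a swapped pair, its inversion status for $ww_r$ equals the inversion status of $\{w_r(p),w_r(q)\}$ for $w$. This correspondence commutes with translation by $n$, so it induces a bijection on translation classes. Only the single class of $\{r,r+1\}$ changes status, and your conclusion $N(ww_r)=N(w)\pm1$ follows.
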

\begin{proof}
This is~\cite[Theorem~15]{Eriksson/Eriksson:1998a}, 
\cite[Proposition~4.1\,(ii)]{Bjorner/Brenti:1996a},
\cite[\S2.1.3]{Xi:2002a}.
\end{proof}

\begin{definition}
The \emph{affine inversion table} of $w\in W(\tilde A_n)$ is 
\[ \Inv(w) = \llb\Inv_1(w),\dots,\Inv_n(w)\rrb. \]
\end{definition}

Note that, taking the $i\in\{1,\dots,n\}$ for which $w(i)$ is
minimal, we have $\Inv_i(w)=0$. So $\Inv(w)$ has to have some 
entry equal to zero.

\begin{theorem}\label{th:bijection}
The map $w\mapsto \Inv(w)$ gives a bijection between $W(\tilde A_n)$
and $(\bZ_{\ge 0})^n\setminus(\bZ_{>0})^n$.
\end{theorem}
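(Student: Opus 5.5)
We argue directly in the window notation: $w$ is determined by $w(1),\dots,w(n)$, which are pairwise incongruent modulo $n$ by (L.1) and bijectivity, and satisfy (L.2). (The statement writes $\tilde A_n$, but we work with $W(\tilde A_{n-1})$ acting on $\bZ$ with period $n$, so tables have $n$ entries.) First, $\Inv_i(w)$ is finite: by (L.1), for $j=i+mn+r$ with $m$ large we have $w(j)=w(i+r)+mn>w(i)$. We already noted that some entry is zero. Hence $\Inv$ maps $W$ into $(\bZ_{\ge0})^n\setminus(\bZ_{>0})^n$. It remains to show injectivity and surjectivity.

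The key tool is an explicit formula for $\Inv_i(w)$ in terms of the window. For $j\in\{1,\dots,n\}$, $j\ne i$, the number of $m\ge0$ with $j+mn>i$ and $w(j)+mn<w(i)$ is determined by $d=w(i)-w(j)$. For $j>i$ it is $\lceil d/n\rceil$ if $d>0$ and $0$ otherwise. For $j<i$ it is $\lceil d/n\rceil-1$ if $d>0$ and $0$ otherwise. Equivalently, $\Inv_i(w)$ counts the integers $w(j)+mn$ lying in the interval $(-\infty,w(i))$ with $j+mn>i$. In other words, we count the values $w(j')$, $j'>i$, that are smaller than $w(i)$.

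For injectivity we reconstruct $w$ from $\Inv(w)$ via the ordering of the values. Sort the residues: the set $\{w(j'):j'>i\}$ is $w$ applied to $\{i+1,i+2,\dots\}$. Consider the sequence of values $w(1),w(2),\dots$ and build $w$ greedily, as one does for ordinary Lehmer codes. The set $S_i=\{w(j):j\ge i\}$ is the image of a "tail". The value $w(i)$ is the $(\Inv_i(w)+1)$-st smallest element of $S_i$, and $S_{i+1}=S_i\setminus\{w(i)\}$. Periodicity gives $S_{i+n}=S_i+n$. So $w$ is determined by $S_1$ together with the table, and $S_1$ is determined up to a shift by its "shape" modulo $n$. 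We then pin down the shape and shift using (L.2). Concretely, I would show that starting from $S=\{c,c+1,c+2,\dots\}$ (a one-sided infinite interval) and removing elements according to the table periodically reproduces $S+n$ after $n$ steps exactly when the table has a zero entry. The condition that the table is not all positive is precisely what forces the minimum of $S_1$ to be removed within each period. Then $c$ is fixed by (L.2).

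Rather than proving that $S_1$ is an interval outright, I would run the following cleaner route. Apply a shift to assume without loss of generality that $\Inv_n=0$; a cyclic rotation of indices (conjugation by the length-zero rotation, which is in the extended group but acts compatibly) permutes the table cyclically. The condition $\Inv_n(w)=0$ says $w(n)<w(j)$ for all $j>n$, so $S_n=\{w(n)\}\cup S_{n+1}$ has minimum $w(n)$. I expect this is the main obstacle: showing that the greedy reconstruction from an arbitrary nonnegative table with a zero gives a consistent periodic permutation (surjectivity), and showing that $S_1$ is an interval $\{c,c+1,\dots\}$. For the interval claim I would argue that every integer greater than $\min S_1$ lies in $S_1$. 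Every residue class appears in $S_1$ with all sufficiently large representatives, and gaps are impossible because a missing value $v>w(i)$ would have to equal $w(j)$ for some $j<i$, giving infinitely many inversions via periodicity.

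With the interval property, the bijection becomes explicit. Given a table $(a_1,\dots,a_n)$ with some $a_i=0$, set $S_1=\{0,1,2,\dots\}$. Define $w(i)$ as the $(a_i+1)$-st smallest element of $S_i$, and extend by periodicity. We check that $S_{n+1}=S_1+n$; this uses the zero entry, since after $n$ removals exactly $n$ elements are gone and the minimum has been removed. Finally, shift all values by the constant forced by (L.2). The resulting map is the inverse of $\Inv$, since the same recursion computes $\Inv$ of any $w$.
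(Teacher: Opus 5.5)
\paragraph{The interval claim is false.}
Your tail bookkeeping is correct: $w(i)$ is the $(\Inv_i(w)+1)$-st smallest element of $S_i$, $S_{i+1}=S_i\setminus\{w(i)\}$, and $S_{i+n}=S_i+n$. But both injectivity and surjectivity then rest on knowing $S_1$, and your claim that $S_1$ is an interval $\{c,c+1,\dots\}$ is false, even after normalising $\Inv_n(w)=0$. In general $S_1=\{w(1),\dots,w(n)\}+n\bZ_{\ge0}$, whose complement is $w(\{\dots,-1,0\})$. So $S_1$ is an interval only when no inversion $(j,j')$ has $j\le 0<j'$.

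For $n=2$ take the translation $w=[3,0]$. Then $\Inv(w)=\llb 2,0\rrb$, so your normalisation holds, yet $S_1=\{0,2,3,4,\dots\}$ omits $1=w(-1)$. Your explicit inverse fails on exactly this table. Starting from $\{0,1,2,\dots\}$ it gives $w(1)=2$ and $w(2)=0$, which are congruent mod $2$. Hence $S_3=\{1,3,4,\dots\}\ne S_1+2$, and (L.2) would require a shift by $\tfrac12$. Similarly, $w_0=[0,2,\dots,n-1,n+1]$ has $\Inv(w_0)=\llb 0,\dots,0,1\rrb$ and $S_1=\{0,2,3,\dots\}$, and your recipe returns $[0,1,\dots,n-2,n]$, which repeats a residue.

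Your argument that gaps are impossible does not work either. A gap does give an inversion $(j,j')$ with $j\le 0<j'$, and its translates give infinitely many inversion pairs in $\bZ^2$. But every non-identity affine permutation has infinitely many such pairs; only those with first entry in $\{1,\dots,n\}$ are finite in number, so there is no contradiction. As written, neither injectivity (which needs $S_1$ to be recoverable from the table) nor surjectivity is established.

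\paragraph{How to repair it.}
The paper gives no argument of its own here; it simply quotes \cite[Theorem~4.6]{Bjorner/Brenti:1996a}. For a self-contained proof, drop the tail sets and induct on $\sum_i a_i$ using right multiplication by generators.

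Read table indices cyclically, so $\Inv_{i+n}=\Inv_i$. Read the subscript of $w_i$ mod $n$, so that $ww_i$ swaps the values in positions $i+jn$ and $i+1+jn$. The following facts follow directly from the definition:
\begin{itemize}
\item $w(i)>w(i+1)$ if and only if $\Inv_i(w)>\Inv_{i+1}(w)$.
\item At such a descent, $\Inv(ww_i)$ is obtained from $\Inv(w)$ by replacing $(a_i,a_{i+1})$ with $(a_{i+1},a_i-1)$, all other entries unchanged.
\item At an ascent, the replacement is instead $(a_i,a_{i+1})\mapsto(a_{i+1}+1,a_i)$.
\end{itemize}

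For the base case, an all-zero table comes from an increasing $w$, so $w(i)=i+c$, and (L.2) forces $c=0$.

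For injectivity, suppose $\Inv(w)=\Inv(w')=a\neq 0$. Since $a$ has a zero entry, its entries are not all equal, so $a_i>a_{i+1}$ for some cyclic $i$. Then $w$ and $w'$ both have a descent at $i$, and $ww_i$, $w'w_i$ have the same table with smaller sum, so induction gives $w=w'$.

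For surjectivity, take the same $i$ and form the table with $(a_{i+1},a_i-1)$ in positions $i,i+1$. It still has a zero entry (if $a_{i+1}=0$, the zero moves to position $i$) and has smaller sum, so by induction it equals $\Inv(v)$ for some $v$. Since $a_{i+1}\le a_i-1$, the element $v$ has an ascent at $i$, and therefore $\Inv(vw_i)=a$.
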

\begin{proof}
This is~\cite[Theorem~4.6]{Bjorner/Brenti:1996a}.
\end{proof}

\begin{corollary}\label{co:Poincareseries}
We have 
\[ \sum_{w\in W(\tilde A_{n-1})}t^{\ell(w)}=\frac{1-t^n}{(1-t)^n}. \]
\end{corollary}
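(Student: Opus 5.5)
The plan is to combine Lemma~\ref{le:ell1} with Theorem~\ref{th:bijection}, applied to $W(\tilde A_{n-1})$ so that inversion tables have $n$ entries. By Lemma~\ref{le:ell1}, the length of $w$ is the sum of the entries of its affine inversion table $\Inv(w)=\llb\Inv_1(w),\dots,\Inv_n(w)\rrb$. By Theorem~\ref{th:bijection}, $w\mapsto\Inv(w)$ is a bijection from $W(\tilde A_{n-1})$ onto $(\bZ_{\ge0})^n\setminus(\bZ_{>0})^n$, the set of $n$-tuples of nonnegative integers with at least one entry equal to zero. Hence
\[ \sum_{w\in W(\tilde A_{n-1})}t^{\ell(w)}=\sum_{(a_1,\dots,a_n)\in(\bZ_{\ge0})^n\setminus(\bZ_{>0})^n}t^{a_1+\dots+a_n}, \]
as an identity of formal power series. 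This makes sense because each fibre of the length function is finite: there are only finitely many tuples with a given sum.

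The remaining step is a generating-function computation, done by subtracting the strictly positive tuples from all nonnegative tuples. Summing over all of $(\bZ_{\ge0})^n$ factorises as
\[ \prod_{i=1}^n\sum_{a_i\ge0}t^{a_i}=\frac{1}{(1-t)^n}. \]
Summing over $(\bZ_{>0})^n$ gives $\bigl(t/(1-t)\bigr)^n=t^n/(1-t)^n$. The difference is
\[ \frac{1-t^n}{(1-t)^n}, \]
which is the claimed formula.

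There is no real obstacle here. The only point needing care is the indexing mismatch in the paper: Theorem~\ref{th:bijection} is stated for $W(\tilde A_n)$, but the inversion table of an element of $W(\tilde A_{n-1})$, acting via window notation on $\{1,\dots,n\}$, has $n$ entries. So the bijection is to be read as mapping $W(\tilde A_{n-1})$ onto $n$-tuples, and that is the version used above.
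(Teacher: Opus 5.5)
Your proposal is correct and follows the same route as the paper, which proves the corollary by combining Lemma~\ref{le:ell1} with Theorem~\ref{th:bijection} (citing Bj\"orner--Brenti); you have simply written out the generating-function subtraction, all nonnegative $n$-tuples minus the strictly positive ones, that the paper leaves implicit. You are also right to read Theorem~\ref{th:bijection} as a statement about $W(\tilde A_{n-1})$ and $n$-tuples, since the $\tilde A_n$ there is an indexing slip.
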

\begin{proof}
This follows by combining Theorem~\ref{th:bijection} and Lemma~\ref{le:ell1},
see~\cite[Corollary~4.7]{Bjorner/Brenti:1996a},
\cite[Corollary~4.3]{Ehrenborg/Readdy:1996a}.
\end{proof}

\begin{lemma}\label{le:ell3}
The length of $w\in W(\tilde A_{n-1})$ is given by
\begin{equation*}
\ell(w) = \sum_{1\le i<j\le
    n}\left|\left\lfloor\frac{w(j)-w(i)}{n}\right\rfloor\right|, 
\end{equation*}
Here, 
$\lfloor\ \rfloor$ denotes integer part, so for example 
$\lfloor-\frac{5}{4}\rfloor=-2$.
\end{lemma}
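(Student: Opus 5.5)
We derive the formula from Lemma~\ref{le:ell1} by regrouping the inversions by residue classes. Every inversion in Lemma~\ref{le:ell1} is a pair $(i,j)$ with $1\le i\le n$, $j>i$ and $w(j)<w(i)$. Write $j=j_0+mn$ with $1\le j_0\le n$. By (L.1), $w(j)=w(j_0)+mn$. Since $j\ne i$ and $j>i$, either $j_0\ne i$, or $j_0=i$ and $m\ge1$. In the second case $w(j)=w(i)+mn>w(i)$, so no such pair is an inversion. Hence we only need to count, for each unordered pair $\{a,b\}$ of distinct elements of $\{1,\dots,n\}$, the inversions whose two positions lie in the classes $a$ and $b$.

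Fix $1\le a<b\le n$ and set $d=w(b)-w(a)$; note $d\not\equiv 0\pmod n$, since $w$ is a bijection. There are two kinds of inversion to count.
\begin{itemize}
\item[(i)] Pairs $(i,j)=(a,b+mn)$ with $b+mn>a$, i.e.\ $m\ge0$, and $d+mn<0$. The number of such $m$ is the number of integers $m\ge0$ with $m<-d/n$, which is $\max(0,\lceil -d/n\rceil)$.
\item[(ii)] Pairs $(i,j)=(b,a+mn)$ with $a+mn>b$, i.e.\ $m\ge1$, and $w(a)+mn<w(b)$, i.e.\ $m<d/n$. The number of such $m$ is $\max(0,\lceil d/n\rceil-1)$.
\end{itemize}

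Now add the two counts. If $d>0$, count (i) is $0$ and count (ii) is $\lceil d/n\rceil-1$. Since $n\nmid d$, this equals $\lfloor d/n\rfloor=|\lfloor d/n\rfloor|$. If $d<0$, count (ii) is $0$ because $\lceil d/n\rceil\le0$. Count (i) is $\lceil -d/n\rceil$, and again since $n\nmid d$, this equals $-\lfloor d/n\rfloor=|\lfloor d/n\rfloor|$. So each pair $a<b$ contributes exactly $\bigl|\lfloor (w(b)-w(a))/n\rfloor\bigr|$. Summing over all pairs and applying Lemma~\ref{le:ell1} gives the stated formula.

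The one delicate point is keeping the boundary conditions straight: $m\ge0$ in case (i) versus $m\ge1$ in case (ii), which comes from comparing the positions $a<b$. That asymmetry is exactly what makes the two cases produce the floor in both signs. It relies on $n\nmid d$, which we get from injectivity of $w$ together with (L.1). Condition (L.2) is not needed.
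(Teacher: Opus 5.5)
Your proof is correct, and it takes a different route from the paper. The paper gives no argument of its own for Lemma~\ref{le:ell3}: it quotes the formula from Shi and from Bj\"orner--Brenti. It quotes the inversion formula of Lemma~\ref{le:ell1} separately, as an independent fact.

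You instead deduce Lemma~\ref{le:ell3} from Lemma~\ref{le:ell1} by sorting the inversions according to the residue classes of their two positions, and your counts check out. For $a<b$ and $d=w(b)-w(a)$, the pairs $(a,b+mn)$ require $m\ge 0$, while the pairs $(b,a+mn)$ require $m\ge 1$. Combined with $n\nmid d$ (which is where injectivity and (L.1) enter), this asymmetry gives exactly $|\lfloor d/n\rfloor|$ for both signs of $d$.

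What your route buys:
\begin{itemize}
\item an elementary, self-contained link, so the paper relies on one fewer external result;
\item a clear explanation of why the floor function appears and why (L.2) plays no role.
\end{itemize}

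What it does not buy is any Coxeter-theoretic content. Your identity holds for every bijection of $\bZ$ satisfying (L.1), so it only shows that the two expressions agree. The fact that they equal $\ell(w)$ is still imported entirely through Lemma~\ref{le:ell1}, whereas the paper's citation treats the floor formula as independently established. A fully self-contained treatment would need a direct proof of one of the two formulas, for instance by checking how the inversion count changes under right multiplication by each $w_i$. That proof should not itself go through the other formula.
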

\begin{proof}
See~\cite[Lemma~4.2.2]{Shi:1986a},
\cite[Proposition~3.1]{Bjorner/Brenti:1996a}. 
\end{proof}

If we ignore condition (L.2) at the beginning of this section, and
just use (L.1),
the group of permutations of $\bZ$ defined this way is the extended
affine Weyl group 
$\tilde W(\tilde A_{n-1})$, which sits in a short exact sequence
\[ 1 \to W(\tilde A_{n-1})\to \tilde W(\tilde A_{n-1}) \to\bZ\to 1, \]
where the map to $\bZ$ is 
\[ w\mapsto \frac{1}{n}\sum_{i=1}^n(w(i)-i)= 
\frac{1}{n}\sum_{i=1}^{n} w(i)\ -\ \frac{n+1}{2}. \]
The group $\tilde W(\tilde A_{n-1})$ is a semidirect product
$\bZ^{n}\rtimes\sym{n}$ where 
\[ \sym{n}=\langle w_1,\dots,w_{n-1}\rangle=W(A_{n-1}), \] 
and $\bZ^n$ is the
natural permutation module for $\sym{n}$, acting via
\begin{equation}\label{eq:action}
\sigma(r_1,\dots,r_n)=(r_{\sigma^{-1}(1)},\dots,r_{\sigma^{-1}(n)}). 
\end{equation} The map to the quotient
$\sym{n}$ of $\bZ^n\rtimes\sym{n}$ sends $w_i$ to $(i,i+1)$ for $1\le i\le n-1$ and $w_0$ to $(1,n)$. 
In terms of the above permutation representation, this amounts to   
identifying $i$ with $i+n$ so that the action is on $\bZ/n$.  
 The $j$th coordinate in $\bZ^n$ corresponds to the permutation sending $i\in\bZ$ to $i+n$ if
$i-j$ is divisible by $n$ and fixing $i$ otherwise. This is the vector
$(0,\dots,0,1,0,\dots,0)\in\bZ^n$ with the $1$ in the $i$th place.

The
subgroup $W(\tilde A_{n-1})$ of $\tilde W(\tilde A_{n-1})$ sits in a
split short exact sequence
\begin{equation}\label{eq:ZWS}
1\to \bZ^{n-1} \to W(\tilde A_{n-1}) \to \sym{n} \to 1
\end{equation}
so that we have $W(\tilde A_{n-1})\cong\bZ^{n-1}\rtimes\sym{n}$. Here  
$\bZ^{n-1}$ is the subgroup of $\bZ^n$ given by $n$-tuples with sum
zero, and the action of $\sym{n}$ is by permutations of the
coordinates. There is a splitting which sends $\sym{n}$ to
\begin{equation}\label{eq:WAn-1}
 W(A_{n-1})=\langle w_1,\dots,w_{n-1}\rangle\subseteq W(\tilde
  A_{n-1}). 
\end{equation}
In terms of this semidirect product, we have
\begin{align*} 
w_i& \mapsto (0,0,\dots,0).(i,i+1)& (1\le i\le n-1),\\ 
w_0&\mapsto (1,0,\dots,0,-1).(1,n). 
\end{align*}

\begin{definition}\label{def:wbar}
For $w\in W(\tilde A_{n-1})$, we write $\bar w$ for its image in
$\sym{n}$ under the map~\eqref{eq:ZWS}. So for example $\bar
w_0=(1,n)$, and for $1\le i\le n-1$ we have $\bar w_i=(i,i+1)$.
\end{definition}

\begin{lemma}\label{le:power}
Given $w\in W(\tilde A_{n-1})$, there is a positive integer $d$ such
that $w^d\in\bZ^{n-1}$.
\end{lemma}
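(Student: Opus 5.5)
The plan is to use the split short exact sequence~\eqref{eq:ZWS}, $1\to\bZ^{n-1}\to W(\tilde A_{n-1})\to\sym{n}\to 1$. Given $w\in W(\tilde A_{n-1})$, its image $\bar w$ (Definition~\ref{def:wbar}) lies in the finite group $\sym{n}$, so it has finite order. We may take $d$ to be the order of $\bar w$, or simply $d=n!$. Since the map $w\mapsto\bar w$ is a group homomorphism, we get $\overline{w^d}=\bar w^{\,d}=1$. Hence $w^d$ lies in the kernel of the map to $\sym{n}$, and by exactness of~\eqref{eq:ZWS} this kernel is exactly the subgroup $\bZ^{n-1}$. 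Moreover $d\ge 1$, as required.

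If one wants the statement made explicit in the permutation representation, one can also argue as follows. Write $w=(r,\sigma)$ in the semidirect product $\bZ^{n-1}\rtimes\sym{n}$. Then
\[ w^d=\bigl(r+\sigma r+\dots+\sigma^{d-1}r,\ \sigma^d\bigr), \]
which has trivial $\sym{n}$-component once $\sigma^d=1$. This gives the same conclusion directly.

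There is no real obstacle here. The only point to check is that~\eqref{eq:ZWS} is genuinely exact, with the kernel of $w\mapsto\bar w$ equal to $\bZ^{n-1}$, and this was stated earlier in the section.
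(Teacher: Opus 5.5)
Your proposal is correct and matches the paper's proof: both take $d$ to be the order of $\bar w$ in $\sym{n}$ and use the exact sequence~\eqref{eq:ZWS} to conclude that $w^d$ lies in the kernel $\bZ^{n-1}$. Your explicit semidirect-product computation of $w^d$ is a fine optional supplement but not needed.
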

\begin{proof}
Recall from~\eqref{eq:ZWS} that $W(\tilde A_{n-1})$ is a semidirect product
$\bZ^{n-1}\rtimes\sym{n}$. So if $d$ is the order of $\bar w$ then
$w^d\in\bZ^{n-1}$. 
\end{proof}

\begin{lemma}\label{le:ell4}
For elements of the subgroup $\bZ^{n-1}$, the vector $(r_1,\dots,r_n)$
is 
\[ [nr_1+1,nr_2+2\dots,nr_n+n] \]
in window notation. The length function is given by  
\begin{equation*}
 \ell(r_1,\dots,r_n)=\sum_{1\le i<j\le n}|r_j-r_i|. 
\end{equation*}
\end{lemma}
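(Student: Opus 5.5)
The plan is to identify the window notation of a translation directly from the description of $\bZ^n$ inside $\tilde W(\tilde A_{n-1})$, and then read off the length from Lemma~\ref{le:ell3}.

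For the window notation, recall that the $j$th coordinate vector of $\bZ^n$ is the permutation of $\bZ$ sending $i\mapsto i+n$ when $i\equiv j \pmod n$ and fixing $i$ otherwise. These permutations commute, so the vector $(r_1,\dots,r_n)$ acts as the composite of their $r_j$th powers. Hence it sends $i\mapsto i+nr_j$ whenever $i\equiv j\pmod n$. In particular $j\mapsto nr_j+j$ for $1\le j\le n$, which is the window $[nr_1+1,\dots,nr_n+n]$. One checks that (L.1) holds. When $\sum r_j=0$, (L.2) also holds, since $\sum_j (w(j)-j)=n\sum_j r_j=0$, so the element lies in $W(\tilde A_{n-1})$.

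For the length, apply Lemma~\ref{le:ell3} with $w(j)-w(i)=n(r_j-r_i)+(j-i)$ for $1\le i<j\le n$. Since $0<j-i<n$, we get
\[
\left\lfloor\frac{n(r_j-r_i)+(j-i)}{n}\right\rfloor=r_j-r_i+\left\lfloor\frac{j-i}{n}\right\rfloor=r_j-r_i.
\]
Taking absolute values and summing over $i<j$ gives $\ell(r_1,\dots,r_n)=\sum_{i<j}|r_j-r_i|$.

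No step is a real obstacle. The only point needing care is the convention for which coordinate vector corresponds to which residue class. This is fixed by the paper's statement that the $j$th coordinate shifts the residue class of $j$, and the formula is symmetric in how the $r_j$ are indexed in any case. As a sanity check, $w_0w_{n-1}\cdots w_1\cdots$ style elements are unnecessary; it suffices to check $n=2$. There $(1,-1)$ has window $[3,0]$ and length $2$, which matches $|{-1}-1|=2$, since $w_0w_1=[0,3]\cdot$-type products have length $2$.
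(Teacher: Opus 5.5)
Your proposal is correct and is essentially the paper's proof: substitute $w(j)-w(i)=n(r_j-r_i)+(j-i)$ into Lemma~\ref{le:ell3} and use $0<j-i<n$ to remove the fractional part. Your derivation of the window notation from the coordinate-shift description is a small step the paper leaves implicit, and the closing sanity check is garbled ($[0,3]$ is $w_0$ itself, whereas $w_0w_1=[3,0]=(1,-1)$), but it plays no role in the argument.
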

\begin{proof}
It follows from Lemma~\ref{le:ell3} that
\begin{align*}
\ell(r_1,\dots,r_n)&=\ell[nr_1+1,\dots,nr_n+n]\\
&=\sum_{1\le i<j\le n}
\left|\left\lfloor\frac{(nr_j+j)-(nr_i+i)}{n}\right\rfloor\right| \\
&=\sum_{1\le i<j\le n}
\left|\left\lfloor (r_j-r_i) + \frac{j-i}{n}\right\rfloor\right| \\
&=\sum_{1\le i<j\le n}|r_j-r_i|,
\end{align*}
since $r_j-r_i$ is an integer and $0< \frac{j-i}{n}<1$.
\end{proof}

\begin{theorem}\label{th:ell5}
Let 
\[ w=[s_1,\dots,s_n]\in  W(\tilde A_{n-1}) \] 
in window notation, and 
\[ w'=(r_1,\dots,r_n)=[nr_1+1,\dots,nr_n+n]\in \bZ^{n-1} \] 
in vector and window notation.
Then
\begin{align*}
ww'&=[nr_1+s_1,\dots,nr_n+s_n], \\
w'w&=[nr_{s_1}+s_1,\dots,nr_{s_n}+s_n]=w(w^{-1}w'w)
\end{align*}
where
$w^{-1}w'w=(r_{s_1},\dots,r_{s_n})=[nr_{s_1}+1,\dots,nr_{s_n}+n]$. In
the expression $r_{s_i}$, we read the subscript $s_i$ modulo $n$, as the
conjugation action of $w$ on $w'$ only depends on the image $\bar w$
of $w$ in $\sym{n}$. 
\begin{enumerate}
\item[(a)]
The following are equivalent.
\begin{enumerate}
\item[(1)] $\ell(ww')<\ell(w)+\ell(w')$,
\item[(2)] There is a pair of coordinates $1\le i<j\le n$ with either
  $r_i<r_j$ and $s_i>s_j$, or $r_i>r_j$ and $s_i+n<s_j$.
\end{enumerate}
\item[(a$'$)] The following are equivalent.
\begin{enumerate}
\item[(1)] $\ell(ww')=\ell(w)+\ell(w')$,
\item[(2)] For all $1\le i<j\le n$, either $r_i\ge r_j$ and $s_i+n>
  s_j$, or $r_i\le r_j$ and $s_i< s_j$.
\end{enumerate}
\item[(b)] The following are equivalent.
\begin{enumerate}
\item[(1)] $\ell(ww')=\ell(w')-\ell(w)$,
\item[(2)]
For all $1\le i<j\le n$ we have either 
\[
r_i-r_j\le \left\lfloor\frac{s_j-s_i}{n}\right\rfloor\le 
0,\quad\text{\rm or }\quad
r_i-r_j\ge \left\lfloor\frac{s_j-s_i}{n}\right\rfloor\ge 0.
\]
\end{enumerate}
\item [(c)] The following are equivalent.
\begin{enumerate}
  \item [(1)] $\ell(w'w)<\ell(w')+\ell(w)$,
  \item [(2)] There is a pair of coordinates $1\le i<j\le n$ with
    either $r_{s_i}<r_{s_j}$ and $s_i>s_j$, or $r_{s_i}>r_{s_j}$ and
    $s_i+n<s_j$. 
\end{enumerate}
\end{enumerate}
\end{theorem}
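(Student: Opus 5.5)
The two product formulas come first, directly from composition of permutations of $\bZ$ together with (L.1). For $ww'$ we have $ww'(i)=w(nr_i+i)=nr_i+w(i)=nr_i+s_i$. For $w'w$ we have $w'w(i)=w'(s_i)$. Writing $s_i=nq+t$ with $t\in\{1,\dots,n\}$, we get $w'(s_i)=nq+nr_t+t=s_i+nr_t$, which is exactly $nr_{s_i}+s_i$ with the subscript read modulo $n$. Comparing window notation with the first formula, applied to the translation $(r_{s_1},\dots,r_{s_n})$ in place of $w'$, gives $w'w=w(w^{-1}w'w)$ with $w^{-1}w'w=(r_{s_1},\dots,r_{s_n})$. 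This vector still has coordinate sum zero, since $i\mapsto s_i$ mod $n$ is a bijection.

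For (a) and (a$'$) I will compute all three lengths with Lemma~\ref{le:ell3}. Lemma~\ref{le:ell4} gives $\ell(w')=\sum_{i<j}|r_j-r_i|$. Setting $a_{ij}=\lfloor (s_j-s_i)/n\rfloor$, we have $\ell(w)=\sum_{i<j}|a_{ij}|$, and since $r_j-r_i$ is an integer,
\[ \ell(ww')=\sum_{i<j}\bigl|\,r_j-r_i+a_{ij}\,\bigr|. \]
The triangle inequality holds termwise:
\[ |r_j-r_i+a_{ij}|\le|r_j-r_i|+|a_{ij}|, \]
with equality exactly when $r_j-r_i$ and $a_{ij}$ do not have strictly opposite signs. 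Hence equality $\ell(ww')=\ell(w)+\ell(w')$ holds if and only if no pair $i<j$ has opposite strict signs.

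I then translate the sign of $a_{ij}$ into inequalities on the $s$'s. Because $s_i\not\equiv s_j$ mod $n$, we have $a_{ij}<0$ iff $s_j<s_i$, and $a_{ij}>0$ iff $s_j>s_i+n$. An offending pair is therefore one with $r_j>r_i$ and $s_i>s_j$, or with $r_j<r_i$ and $s_i+n<s_j$. This is (a)(2). The statement (a$'$) is its logical negation, after checking that "not offending" for a pair is the same as "$r_i\ge r_j$ and $s_i+n>s_j$, or $r_i\le r_j$ and $s_i<s_j$". The boundary cases $r_i=r_j$ and $s_i<s_j<s_i+n$ need a quick check here.

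For (c) I will use $w'w=w\,w''$ with $w''=w^{-1}w'w$, which has the same length as $w'$ because $\ell$ in Lemma~\ref{le:ell4} is invariant under permuting coordinates. Applying (a) with $r$ replaced by $(r_{s_1},\dots,r_{s_n})$ then gives (c) immediately.

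For (b), the reverse triangle inequality gives termwise
\[ |r_j-r_i+a_{ij}|\ge |r_j-r_i|-|a_{ij}|. \]
Equality holds iff $a_{ij}$ and $r_j-r_i$ have opposite (weak) signs and $|a_{ij}|\le|r_j-r_i|$. This means either $r_i-r_j\le a_{ij}\le 0$ or $r_i-r_j\ge a_{ij}\ge 0$, where the case $a_{ij}=0$ is covered by one of the two alternatives. Summing over pairs, $\ell(ww')=\ell(w')-\ell(w)$ holds iff every term attains equality, which is (b)(2).

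The main point needing care is the floor computation in Lemma~\ref{le:ell3} for $ww'$: one must check that $\lfloor (n(r_j-r_i)+s_j-s_i)/n\rfloor=r_j-r_i+a_{ij}$, which holds because $n(r_j-r_i)$ is an integer multiple of $n$. One must also track the sign conventions for $a_{ij}$ carefully, including its strict versus weak cases.
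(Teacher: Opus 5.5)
Your proposal is correct and follows the paper's proof essentially step for step. The product formulas come from composing the permutations of $\bZ$ using (L.1); parts (a), (a$'$) and (b) then follow from Lemmas~\ref{le:ell3} and~\ref{le:ell4} with the termwise triangle inequality (and its reverse for (b)), together with the sign analysis of $\lfloor (s_j-s_i)/n\rfloor$ using $s_j\not\equiv s_i \pmod n$. For (c), you apply (a) to $w$ and $w^{-1}w'w=(r_{s_1},\dots,r_{s_n})$, whose length equals $\ell(w')$ because the formula in Lemma~\ref{le:ell4} is symmetric in the coordinates; this is just an explicit version of what the paper means by ``similar''.
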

\begin{proof}
We have
\begin{align*}
 w&=[s_1,\dots,s_n] \colon jn+i \to jn+s_i\qquad (1\le i\le n), \\
 w'&= (r_1,\dots,r_n)=[nr_1+1,\dots,nr_n+n] \colon jn+i \to
     (j+r_i)n+i\qquad (1\le i\le n).
\end{align*}
Then the formulae for $ww'$ and $w'w$ come from composing these. Given
this, using Lemmas~\ref{le:ell3} and~\ref{le:ell4} we have
\begin{align*} 
\ell(ww')&=\sum_{1\le i<j\le n}
\left|\left\lfloor\frac{(nr_j+s_j)-(nr_i+s_i)}{n}\right\rfloor\right|
=\sum_{1\le i<j\le n}\left|(r_j-r_i)+\left\lfloor\frac{s_j-s_i}{n}\right\rfloor\right|.
\end{align*}
By the triangle inequality this is less than or equal to
\[ \ell(w)+\ell(w')=\sum_{1\le i<j\le n} |r_j-r_i|+\sum_{1\le i<j\le n}
\left|\left\lfloor\frac{s_j-s_i}{n}\right\rfloor\right|.
\]
The expression $s_j-s_i$ is never equal to $0$ or $n$; the expression
$\lfloor\frac{s_j-s_i}{n}\rfloor$ is therefore negative if
$s_j-s_i<0$, 
zero if $0<s_j-s_i<n$, and positive if $s_j-s_i>n$.
So we have equality
unless $r_i<r_j$ and $s_i>s_j$, or $r_i>r_j$ and $s_i+n<s_j$.
Thus in part (a), (1) and (2) are equivalent. Part (a$'$) is the
contrapositive of part (a).
For part (b), $\ell(ww')$ is equal to
\[ \ell(w')-\ell(w)=\sum_{1\le i<j\le n} |r_j-r_i|-\sum_{1\le i<j\le n}
\left|\left\lfloor\frac{s_j-s_i}{n}\right\rfloor\right|.
\]
if and only if each $-a\le b\le 0$ or $-a\ge b\ge 0$, where $a=r_j-r_i$
and $b=\lfloor \frac{s_j-s_i}{n}\rfloor$.

The proof of part (c) is similar to part (a$'$).
\end{proof}

\begin{corollary}\label{co:ellww'=w''w} 
Let $w=[s_1,\ldots,s_n]$, $w'=(r_1,\ldots,r_n)\in\bZ^{n-1}$ and
$w''=ww'w^{-1}=(t_1,\ldots,t_n)\in\bZ^{n-1}$ so that
$ww'=w''w$. Assume that $r_{i_1}\le\cdots\le r_{i_n}$ and
$s_{i_k}+n<s_{i_{k+1}}$ for all $1\le k\le n-1$. Then
$\ell(ww')=\ell(w)+\ell(w')$, $\ell(w''w)=\ell(w'')+\ell(w)$ and
$t_{s_k}=r_k$ for each $1\le k\le n$. 
\end{corollary}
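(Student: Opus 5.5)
Corollary~\ref{co:ellww'=w''w} follows from Theorem~\ref{th:ell5}, parts (a$'$) and (c), together with the conjugation formula stated there. The plan is first to identify $w''$ explicitly, and then to verify the criterion of (a$'$) for $ww'$ and the criterion of (c) for $w''w$.

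\emph{Computing $t$.} By Theorem~\ref{th:ell5}, for any $u\in\bZ^{n-1}$ we have $w^{-1}uw=(u_{s_1},\dots,u_{s_n})$, with subscripts read modulo $n$. Applying this to $u=w''$ and using $w^{-1}w''w=w'$ gives $(t_{s_1},\dots,t_{s_n})=(r_1,\dots,r_n)$, that is, $t_{s_k}=r_k$ with $s_k$ read modulo $n$. Since $\bar w$ is a permutation, the residues of $s_1,\dots,s_n$ modulo $n$ are distinct, so $t$ is well defined by this rule.

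\emph{Additivity for $ww'$.} We check condition (2) of (a$'$). Take $1\le i<j\le n$. If $r_i=r_j$, the two alternatives reduce to $s_i+n>s_j$ or $s_i<s_j$, and one of these always holds since $s_i<s_j$ or $s_i>s_j$. So assume $r_i\ne r_j$. Write $i=i_a$ and $j=i_b$ with respect to the given ordering.
\begin{itemize}
\item If $r_i>r_j$, the weak monotonicity of $r_{i_1}\le\dots\le r_{i_n}$ forces $a>b$. Chaining the hypothesis $s_{i_k}+n<s_{i_{k+1}}$ gives $s_j+n<s_i$, hence $s_i+n>s_j$, which is the first alternative.
\item If $r_i<r_j$, then $a<b$, and chaining gives $s_i+n<s_j$, in particular $s_i<s_j$, which is the second alternative.
\end{itemize}
So $\ell(ww')=\ell(w)+\ell(w')$.

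\emph{Additivity for $w''w$.} We apply (c) with $w''$ in the role of $w'$. Its failure condition requires a pair $i<j$ with either $t_{s_i}<t_{s_j}$ and $s_i>s_j$, or $t_{s_i}>t_{s_j}$ and $s_i+n<s_j$. By the first step $t_{s_i}=r_i$, so these conditions are exactly the failure conditions in (a), which we have just excluded. Hence $\ell(w''w)=\ell(w'')+\ell(w)$.

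The only delicate point is the chaining step in the second paragraph. Equal values among the $r$'s make the ordering $i_1,\dots,i_n$ non-unique, but the $s$-inequalities are hypothesised along the chosen ordering, so the chaining argument is unaffected by ties. Alternatively, once $t_{s_k}=r_k$ is established, one could check directly that $\ell(w'')=\ell(w')$, since conjugation permutes coordinates and the length formula of Lemma~\ref{le:ell4} is symmetric in the coordinates. Then $w''w=ww'$ gives the second equality at once from the first.
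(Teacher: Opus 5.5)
Your proof is correct and follows essentially the same route as the paper: (a$'$) of Theorem~\ref{th:ell5} for $\ell(ww')$, the conjugation formula there to get $t_{s_k}=r_k$, and (c) for $\ell(w''w)$. Your observation that the failure conditions in (c) become exactly those of (a) is a neat repackaging of the paper's direct case check, and your closing shortcut is also valid and quicker: $t$ permutes the coordinates of $r$, so $\ell(w'')=\ell(w')$, and then $w''w=ww'$ gives the second equality immediately.
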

\begin{proof} The fact $\ell(ww')=\ell(w)+\ell(w')$ follows from
  Theorem \ref{th:ell5}(a$'$). Let $w''=(t_1,\ldots,t_n)$ so that
  $r_j=t_{s_j}$. Let $i<j$, $i=i_a$ and $j=i_b$. If $a<b$, then
  $s_{i_a}+n<s_{i_b}$ but $t_{s_i}=r_{i_a}\le r_{i_b}= t_{s_j}$. If
  $a>b$, then $s_i>s_j$ but $t_{s_i}=r_{i_a}\ge r_{i_b}=t_{s_j}$. So
  $\ell(w''w)=\ell(w'')+\ell(w)$ by Theorem~\ref{th:ell5}(c). 
\end{proof}

\begin{corollary}\label{co:ell6}
For $w=(r_1,\dots,r_n)\in\bZ^{n-1}$ in vector notation, we have
\[ \ell(w_iw)<\ell(w) \Longleftrightarrow 
\begin{cases} 
r_i<r_{i+1}& 1\le i\le n-1,\\
r_n<r_1 & i=0
\end{cases} \]
while
\[ \ell(ww_i)<\ell(w)\Longleftrightarrow
\begin{cases}
r_i>r_{i+1}&1\le i\le n-1,\\
r_n>r_1&i=0.
\end{cases} \]
\end{corollary}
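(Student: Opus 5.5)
The idea is to apply Theorem~\ref{th:ell5} with one of the two factors being a simple reflection $w_i$, written in window notation. Since $\ell(w_i)=1$ and lengths change by exactly $\pm 1$ under multiplication by a generator, we have
\[
\ell(ww_i)<\ell(w)\iff \ell(ww_i)\ne\ell(w)+\ell(w_i),
\]
and similarly for $w_iw$. So it suffices to determine when the length is additive, and the equivalences (a) and (c) of Theorem~\ref{th:ell5} do exactly that.

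For the right-multiplication statement, the relevant product is $ww_i$ with $w\in\bZ^{n-1}$. Theorem~\ref{th:ell5} is phrased with the translation on the right, so first invert: $\ell(ww_i)=\ell(w_iw^{-1})$, and $w^{-1}=(-r_1,\dots,-r_n)$. Apply Theorem~\ref{th:ell5}(a) to the pair $w_i$ and $w^{-1}$.
\begin{itemize}
\item For $1\le i\le n-1$, $w_i$ has window $s=[1,\dots,i+1,i,\dots,n]$. The only pair with $s_a>s_b$ is $(a,b)=(i,i+1)$, and no pair satisfies $s_a+n<s_b$. The criterion then reads $-r_i<-r_{i+1}$, that is, $r_i>r_{i+1}$.
\item For $i=0$, $w_0$ has window $[0,2,\dots,n-1,n+1]$. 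No pair has $s_a>s_b$, and the only pair with $s_a+n<s_b$ is $(1,n)$, since $0+n<n+1$. The criterion is $-r_1>-r_n$, that is, $r_n>r_1$.
\end{itemize}

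For the left-multiplication statement, use Theorem~\ref{th:ell5}(c) directly, with $w'=w$ a translation and the non-translation factor being $w_i$. The same pairs are the only relevant ones, but now the comparison is between $r_{s_a}$ and $r_{s_b}$.
\begin{itemize}
\item For $1\le i\le n-1$, the pair $(i,i+1)$ has $s_i=i+1$ and $s_{i+1}=i$. The condition $r_{s_i}<r_{s_{i+1}}$ becomes $r_{i+1}<r_i$. Hmm — this is reversed from the stated criterion, so the convention must be checked.
\item For $i=0$, the pair $(1,n)$ has $s_1=0\equiv n$ and $s_n=n+1\equiv 1$. The condition $r_{s_1}>r_{s_n}$ becomes $r_n>r_1$, again the opposite of the stated criterion.
\end{itemize}

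The main obstacle is keeping these conventions straight: the order of composition in Theorem~\ref{th:ell5}, the reading of subscripts modulo $n$, and the inversion step. To settle the signs I will recompute directly from Lemma~\ref{le:ell4} together with the formulas $ww'=[nr_k+s_k]$ and $w'w=[nr_{s_k}+s_k]$.

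Take $w_iw$ with $1\le i\le n-1$, computed as the composite $w_i\circ w$ acting on $\bZ$. It sends $jn+k\mapsto (j+r_k)n+k$ and then applies $w_i$, which swaps residues $i$ and $i+1$. Its window is therefore $[nr_1+1,\dots,nr_{i+1}+i,\,nr_i+i+1,\dots]$. By Lemma~\ref{le:ell3}, the pair $(i,i+1)$ contributes $|\lfloor (n(r_i-r_{i+1})+1)/n\rfloor|=|r_i-r_{i+1}|$, where the original contributed the same quantity. The change in length comes from the other pairs involving positions $i$ and $i+1$. Since these contributions are sums that are merely exchanged, the net change must be computed from the pair itself. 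So I will carefully redo it using the formula $\ell(w_iw)=\ell(w^{-1}w_i)$ and Theorem~\ref{th:ell5}(a).

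That formula gives the condition $-r_i<-r_{i+1}$, i.e.\ $r_i>r_{i+1}$, under which $\ell(w^{-1}w_i)<\ell(w)$, i.e.\ $\ell(w_iw)<\ell(w)$. This conflicts with the statement, so at this point the intended composition convention (whether $w_iw$ means $w_i$ applied first) has to be fixed. The statement is consistent with products being read as composites acting on the right. Once that convention is fixed, the two displayed equivalences follow by the pair-by-pair check above, and the case $i=0$ is handled identically via the pair $(1,n)$.
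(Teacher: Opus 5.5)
Your right-multiplication half is correct. Inverting to $w_iw^{-1}$ and applying Theorem~\ref{th:ell5}(a) with $s$ the window of $w_i$ and $-r$ the vector of $w^{-1}$ gives $r_i>r_{i+1}$ (resp.\ $r_n>r_1$), and with $\ell(ww_i)=\ell(w)\pm1$ this is the second equivalence.

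The left-multiplication half is not proved. The ``conflict'' with the statement comes from your own bookkeeping, not from the paper's conventions.
\begin{itemize}
\item In Theorem~\ref{th:ell5}(c) the product $w'w$ has the translation on the \emph{left}. With the translation being your $w$ and the other factor $w_i$, part~(c) describes $ww_i$, not $w_iw$. So the criterion $r_{i+1}<r_i$ (resp.\ $r_n>r_1$) you extracted from (c) is the right-multiplication criterion again, consistent with what you already had.
\item The same slip recurs with $\ell(w_iw)=\ell(w^{-1}w_i)$. The product $w^{-1}w_i$ has the translation on the left, so (a) does not apply to it. Applying (a) to the pair $w_i$, $w^{-1}$ computes $\ell(w_iw^{-1})=\ell(ww_i)$. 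Part~(c) is what applies to $w^{-1}w_i$, and it gives $-r_{i+1}<-r_i$, i.e.\ $r_i<r_{i+1}$, exactly as stated.
\item Your direct window computation is also wrong. The composite $w_i\circ w$ has $nr_i+i+1$ in position $i$ and $nr_{i+1}+i$ in position $i+1$. The window you wrote has trivial image in $\sym{n}$: it is the window of the translation $w_iww_i=\bar w_i(w)$. That is why you saw no change in length.
\end{itemize}

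The repair is short. The product $w_iw$ already has the form $ww'$ of Theorem~\ref{th:ell5}(a): non-translation $w_i=[s_1,\dots,s_n]$ on the left, translation $(r_1,\dots,r_n)$ on the right. No inversion is needed. For $1\le i\le n-1$ the only pair with $s_a>s_b$ is $(i,i+1)$, which gives the condition $r_i<r_{i+1}$. For $i=0$ the only pair with $s_a+n<s_b$ is $(1,n)$, which gives $r_1>r_n$.

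Equivalently, with the correct window, Lemma~\ref{le:ell3} shows that only the pair $(i,i+1)$ changes its contribution. It goes from $|r_{i+1}-r_i|$ to $|r_{i+1}-r_i-1|$, which is smaller if and only if $r_{i+1}>r_i$.

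No change of convention is needed. The formula $ww'=[nr_1+s_1,\dots,nr_n+s_n]$ in Theorem~\ref{th:ell5} fixes ordinary composition, and the statement holds as written under it. Reading products as right actions would swap the two displayed criteria and make the statement false, including the half you proved. The paper's one-line proof is exactly this: (a) for $w_iw$ and (c) for $ww_i$, plus the fact that multiplying by a generator changes length by $\pm1$.
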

\begin{proof}
This follows immediately from Theorem~\ref{th:ell5}.
\end{proof}

\begin{corollary}\label{co:ellZ}
Let $w=(r_1,\dots,r_n)$ and $w'=(r'_1,\dots,r'_n)$ be elements of
$\bZ^{n-1}$. Then $ww'=w'w=(r_1+r'_1,\dots,r_n+r'_n)$, and the
following are equivalent:
\begin{enumerate}
\item $\ell(w)+\ell(w')=\ell(ww')$,
\item there do not exist coordinates $i$, $j$ such that $r_i<r_j$ but
  $r'_i>r'_j$, 
\item there exists an ordering on the coordinates for which both the
  $r_i$ and the $r'_i$ are non-decreasing.
\end{enumerate}
\end{corollary}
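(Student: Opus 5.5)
The commutativity and the formula $ww'=w'w=(r_1+r'_1,\dots,r_n+r'_n)$ hold because $\bZ^{n-1}$ is an abelian subgroup of $W(\tilde A_{n-1})$ with vector addition as its group law. One can also read this off from the formula $ww'=[nr_1+s_1,\dots,nr_n+s_n]$ of Theorem~\ref{th:ell5}: take $s_i=nr'_i+i$, and the result is $[n(r_1+r'_1)+1,\dots,n(r_n+r'_n)+n]$. This window corresponds to the vector $(r_1+r'_1,\dots,r_n+r'_n)$ by Lemma~\ref{le:ell4}, and the expression is symmetric in $w,w'$.

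For the equivalences I will use the length formula of Lemma~\ref{le:ell4} directly instead of going through Theorem~\ref{th:ell5}(a). It gives
\[ \ell(ww')=\sum_{1\le i<j\le n}|(r_j-r_i)+(r'_j-r'_i)|\le \sum_{1\le i<j\le n}|r_j-r_i|+\sum_{1\le i<j\le n}|r'_j-r'_i|=\ell(w)+\ell(w'). \]
Equality holds overall exactly when it holds termwise in the triangle inequality. For a fixed pair this means $r_j-r_i$ and $r'_j-r'_i$ do not have strictly opposite signs. So (1) says that for no pair do $r_i<r_j$ and $r'_i>r'_j$ hold simultaneously. Since the condition in (2) is symmetric under swapping the roles of $i$ and $j$, this is exactly (2). (Alternatively one can cite Theorem~\ref{th:ell5}(a) with $s_i=nr'_i+i$. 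There $s_i>s_j$ with $i<j$ is equivalent to $r'_i>r'_j$, and $s_i+n<s_j$ with $i<j$ is equivalent to $r'_i<r'_j$. This recovers the same condition.)

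For (2)$\Rightarrow$(3), I order the coordinates lexicographically: $i\prec j$ if $r_i<r_j$, or if $r_i=r_j$ and $r'_i\le r'_j$, with ties broken by index. Along this order the $r_i$ are plainly non-decreasing. Now suppose $i$ precedes $j$ but $r'_i>r'_j$. Since $i$ precedes $j$ we have $r_i\le r_j$. If $r_i=r_j$, the tie-breaking rule would have forced $r'_i\le r'_j$, a contradiction. So $r_i<r_j$, and together with $r'_i>r'_j$ this contradicts (2).

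For (3)$\Rightarrow$(2), suppose $r_i<r_j$ but $r'_i>r'_j$. In any ordering where the $r$'s are non-decreasing, $i$ must come before $j$. In any ordering where the $r'$'s are non-decreasing, $j$ must come before $i$. So no single ordering works, contradicting (3). None of these steps is a real obstacle; the only point needing care is handling ties in the construction of the ordering.
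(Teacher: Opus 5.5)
Your proof is correct and follows essentially the paper's route. The paper obtains (1)$\Leftrightarrow$(2) by citing Theorem~\ref{th:ell5}(a), whose proof is the same termwise triangle inequality you run directly through Lemma~\ref{le:ell4}; it calls (2)$\Leftrightarrow$(3) straightforward, and your lexicographic-ordering argument, including the handling of ties, fills that in correctly.
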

\begin{proof}
The equivalence of (1) and (2) follows from
Theorem~\ref{th:ell5}\,(a). The equivalence of (2) and (3) is straightforward.
\end{proof}

\begin{corollary}
The subalgebra $(k\bZ^{n-1})^{\sym{n}}$ is central in the group
algebra $kW(\tilde A_{n-1})$.\hfill\qed
\end{corollary}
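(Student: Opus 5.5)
The plan is to show that every element of $(k\bZ^{n-1})^{\sym{n}}$ commutes with each generator $w_i$ ($0\le i\le n-1$) of $W(\tilde A_{n-1})$. Since $k\bZ^{n-1}$ is commutative, such an element automatically commutes with the subgroup $\bZ^{n-1}$. As $W(\tilde A_{n-1})\cong\bZ^{n-1}\rtimes\sym{n}$ by \eqref{eq:ZWS}, it then suffices to check that it commutes with elements mapping onto $\sym{n}$. Concretely, I would check it on the generators $w_1,\dots,w_{n-1}$ of the splitting \eqref{eq:WAn-1}, together with the translations.

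\emph{Step 1.} Compute the conjugation action on $\bZ^{n-1}$. By Theorem~\ref{th:ell5}, for $w=[s_1,\dots,s_n]$ and $w'=(r_1,\dots,r_n)$ we have
\[ w^{-1}w'w=(r_{s_1},\dots,r_{s_n}), \]
with subscripts read modulo $n$. So conjugation by $w$ depends only on $\bar w\in\sym{n}$: it acts on $\bZ^{n-1}$ by permuting coordinates, and this is the permutation action \eqref{eq:action} up to replacing $\bar w$ by its inverse. As $\bar w$ ranges over $\sym{n}$, this gives exactly the $\sym{n}$-action used to define the fixed points.

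\emph{Step 2.} Let $x=\sum_v a_v v\in(k\bZ^{n-1})^{\sym{n}}$, so that $a_{\sigma v}=a_v$ for all $\sigma\in\sym{n}$. For any $w\in W(\tilde A_{n-1})$, Step 1 gives
\[ w^{-1}xw=\sum_v a_v\,(w^{-1}vw)=\sum_v a_v\,(\bar w^{\pm1}\cdot v)=x, \]
using the invariance of the coefficients. Hence $xw=wx$ for every group element $w$. By linearity, $x$ is then central in $kW(\tilde A_{n-1})$.

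There is no real obstacle here. The only point needing care is that conjugation by elements of $\bZ^{n-1}$ is trivial (the subgroup is abelian), and that conjugation by a general element factors through $\bar w$. Both facts are already contained in the formula for $w^{-1}w'w$ in Theorem~\ref{th:ell5}, which is why this result can be stated as an immediate corollary.
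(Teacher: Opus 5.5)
Your proof is correct and is essentially the argument the paper intends. The corollary is stated without proof because the formula $w^{-1}w'w=(r_{s_1},\dots,r_{s_n})$ in Theorem~\ref{th:ell5} shows that conjugation by any $w\in W(\tilde A_{n-1})$ acts on $\bZ^{n-1}$ through the coordinate permutation $\bar w^{-1}$, so $\sym{n}$-invariant elements of $k\bZ^{n-1}$ commute with every group element, exactly as in your Step~2.
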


\begin{corollary}\label{co:w'''}
Given $w\in W(\tilde A_{n-1})$, there exists $w'$ such that
$ww'$ is an element of $\bZ^{n-1}$ with no two coordinates equal and
$\ell(ww')=\ell(w)+\ell(w')$. Similarly, there exists 
$w''$ such that $w''w$ is an element of $\bZ^{n-1}$ with no  two
coordinates equal and $\ell(w''w)=\ell(w'')+\ell(w)$.
\end{corollary}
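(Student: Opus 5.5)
The plan is to look for $w'$ of the form $w'=w^{-1}z$ with $z\in\bZ^{n-1}$, so that $ww'=z$ lies in $\bZ^{n-1}$ automatically. The length condition $\ell(ww')=\ell(w)+\ell(w')$ then reads $\ell(z)=\ell(w)+\ell(w^{-1}z)$, that is, $\ell(w^{-1}z)=\ell(z)-\ell(w^{-1})$, using $\ell(w^{-1})=\ell(w)$. This is exactly the situation of Theorem~\ref{th:ell5}\,(b), applied with $w^{-1}$ in the role of $w$ and $z$ in the role of $w'$. So the whole problem reduces to choosing the coordinates of $z$ so that condition (b)(2) holds and the coordinates are pairwise distinct.

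Write $w^{-1}=[s_1,\dots,s_n]$ in window notation. The $s_i$ are distinct integers, and since $w^{-1}$ satisfies (L.2) we have $\sum_i s_i=n(n+1)/2$. I propose the explicit choice
\[ z=(r_1,\dots,r_n),\qquad r_i=-ns_i+\tfrac{n(n+1)}{2}. \]
This is an integer vector because $n(n+1)/2\in\bZ$, and its coordinates sum to $-n\cdot\frac{n(n+1)}{2}+n\cdot\frac{n(n+1)}{2}=0$, so $z\in\bZ^{n-1}$. Its coordinates are pairwise distinct because the $s_i$ are.

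Next I check condition (b)(2). Fix $i<j$ and put $x=s_j-s_i$, so that $r_i-r_j=nx$ and $c=\lfloor x/n\rfloor$.
\begin{itemize}
\item If $x>0$, then $c\ge 0$, and $nx\ge x\ge x/n\ge c$. Hence $r_i-r_j\ge c\ge 0$, which is the second alternative.
\item If $x<0$, then $c<0$. Since $x$ is a negative integer, $x\le x/n$, and because $x$ is an integer this gives $x\le\lfloor x/n\rfloor=c$. Hence $nx\le x\le c\le 0$, which is the first alternative.
\end{itemize}
The case $x=0$ cannot occur. Theorem~\ref{th:ell5}\,(b) now gives $\ell(w^{-1}z)=\ell(z)-\ell(w^{-1})$, so $w'=w^{-1}z$ satisfies $\ell(ww')=\ell(w)+\ell(w')$, as required.

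For the second statement, I apply the first part to $w^{-1}$. This produces $v$ with $w^{-1}v=z\in\bZ^{n-1}$ having distinct coordinates and $\ell(z)=\ell(w^{-1})+\ell(v)$. Inverting, set $w''=v^{-1}$; then $w''w=z^{-1}=-z$, which lies in $\bZ^{n-1}$ with distinct coordinates. Since lengths are invariant under inversion, $\ell(w''w)=\ell(w'')+\ell(w)$.

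The only delicate step is matching the inequalities in Theorem~\ref{th:ell5}\,(b), with its floor conventions and its roles of $w$ and $w'$, to the chosen $r_i$. The scaling factor $n$ is what makes both sign cases work simultaneously.
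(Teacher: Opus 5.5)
Your proof is correct and follows essentially the paper's route: apply Theorem~\ref{th:ell5}\,(b) to $w^{-1}=[s_1,\dots,s_n]$ and a vector $z\in\bZ^{n-1}$ with widely separated coordinates, set $w'=w^{-1}z$, and get the second claim by symmetry (you make this precise via inversion, where the paper just says it is similar). Your explicit choice $r_i=-ns_i+\frac{n(n+1)}{2}$ replaces the paper's ``far enough apart, then adjust the sum'' step, and it orders the $r_i$ opposite to the $s_i$, as condition (b)(2) requires; the inequality printed in the paper's proof orders them the same way, which fails already for $n=2$, $w=w_1$.
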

\begin{proof}
For this, we apply Theorem~\ref{th:ell5}\,(b). Write
$w^{-1}=[s_1,\dots,s_n]$. Let $\{i_1,\dots,i_n\}=\{1,\dots,n\}$ be
chosen so that $s_{i_1}\le \dots\le s_{i_n}$. Then
choose $(r_1,\dots,r_n)$ so that for $1\le j\le n-1$ we have
\[ r_{i_{j+1}}-r_{i_j} >
  \left\lfloor\frac{s_{i_{j+1}}-s_{i_j}}{n}\right\rfloor \]
and $\sum_ir_i=0$. This is clearly possible, by choosing the $r_{i_j}$
in ascending order and far enough apart, and then adjusting one end or
the other to make the sum equal to zero. Then condition (b)\,(2) holds.
Setting $w'''=(r_1,\dots,r_n)$, we have
$\ell(w^{-1}w''')=\ell(w''')-\ell(w^{-1})$. We now set $w'=w^{-1}w'''$, so
that we have $\ell(w')=\ell(w''')-\ell(w^{-1})$. 
We get $ww'=w'''\in\bZ^{n-1}$ and since $\ell(w^{-1})=\ell(w)$, we have
\[  \ell(ww')=\ell(w''')=\ell(w^{-1})+\ell(w')=\ell(w)+\ell(w'). \] 
The other case is similar.
\end{proof}

\begin{corollary}\label{co:ww'w}
Let $w\in W(\tilde A_{n-1})$ and $w'=(r'_1,\dots,r'_n)\in\bZ^{n-1}$ and
suppose that 
\[ \{1,\dots,n\}=\{i_1,\dots,i_n\}\quad \text{with} \quad
r'_{i_1}<\dots<r'_{i_n}. \] 
If $\ell(w'ww')=\ell(w)+2\ell(w')$
then $w=(r_1,\dots,r_n)\in\bZ^{n-1}$ with $r_{i_1}\le\cdots\le r_{i_n}$.
\end{corollary}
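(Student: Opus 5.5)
The plan is to split the length hypothesis into two one-sided additivity statements, show these force $w$ into $\bZ^{n-1}$, and then read off the ordering from Corollary~\ref{co:ellZ}.

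\textbf{Step 1: reduce to two additivity conditions.} Since length is subadditive,
\[ \ell(w'ww')\le \ell(w'w)+\ell(w')\le \ell(w')+\ell(w)+\ell(w'). \]
The hypothesis $\ell(w'ww')=\ell(w)+2\ell(w')$ forces equality throughout, so $\ell(w'w)=\ell(w')+\ell(w)$. The same argument with $ww'$ in the middle gives $\ell(ww')=\ell(w)+\ell(w')$.

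\textbf{Step 2: translate into inequalities.} Write $w=[s_1,\dots,s_n]$ and let $\sigma=\bar w$, so $s_i\equiv\sigma(i)$ modulo $n$. Since the $r'_i$ are pairwise distinct, Theorem~\ref{th:ell5}\,(a$'$) turns $\ell(ww')=\ell(w)+\ell(w')$ into strict conditions. For $i<j$:
\begin{itemize}
\item[(A)] if $r'_i<r'_j$ then $s_i<s_j$, and if $r'_i>r'_j$ then $s_j<s_i+n$.
\end{itemize}
Theorem~\ref{th:ell5}\,(c) turns $\ell(w'w)=\ell(w')+\ell(w)$ into the analogous conditions with $r'_i$ replaced by $r'_{\sigma(i)}$. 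For $i<j$:
\begin{itemize}
\item[(C)] if $r'_{\sigma(i)}<r'_{\sigma(j)}$ then $s_i<s_j$, and if $r'_{\sigma(i)}>r'_{\sigma(j)}$ then $s_j<s_i+n$.
\end{itemize}
Condition (A) says that along the order $i_1,\dots,i_n$ the values $s_{i_1},\dots,s_{i_n}$ are "increasing up to a wrap of less than $n$". Condition (C) says the same along the order given by $\sigma^{-1}(i_1),\dots,\sigma^{-1}(i_n)$.

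\textbf{Step 3 (main obstacle): show $\sigma=1$.} I would first show that $\sigma$ preserves the ranking $\rho(i)$, the position of $r'_i$ in the increasing order. Since the $r'_i$ are distinct, this gives $\sigma=1$. I would argue by induction on rank, starting from the index $i_n$ with largest $r'$.
\begin{itemize}
\item By (A), $s_{i_n}$ exceeds every $s_p$ with $p<i_n$, and exceeds every $s_p-n$ with $p>i_n$. So $s_{i_n}$ is the largest value "modulo the wrap".
\item By (C), the index $j$ with $\sigma(j)=i_n$ satisfies the same extremal property.
\item The extremal element should be unique. To show this I would compare the two orderings on a pair where they first disagree. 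That pair should produce a chain of inequalities $s_a<s_b<s_a$, or $s_a<s_b+n<s_a+n$, which is impossible.
\item Then peel off $i_n$ and repeat.
\end{itemize}
If this direct approach is messy, I would fall back on (L.2), $\sum_i s_i=\sum_i i$, to rule out a nontrivial cyclic shift. This is the delicate case: the inequalities alone allow one wrap by $n$, and the sum condition should exclude it.

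\textbf{Step 4: conclude.} Once $\sigma=1$, we have $w\in\bZ^{n-1}$, say $w=(r_1,\dots,r_n)$, with $s_i=nr_i+i$. Now $\ell(ww')=\ell(w)+\ell(w')$ together with Corollary~\ref{co:ellZ}\,(2) says there are no $i,j$ with $r'_i<r'_j$ but $r_i>r_j$. Since $r'_{i_1}<\dots<r'_{i_n}$, this gives $r_{i_1}\le\dots\le r_{i_n}$, as required.
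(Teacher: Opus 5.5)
\textbf{There is a genuine gap.} Your Step~1 discards exactly the information the proof needs, and as a result Step~3 is false, not merely messy. The two one-sided conditions $\ell(ww')=\ell(w)+\ell(w')$ and $\ell(w'w)=\ell(w')+\ell(w)$ do not force $\bar w=1$.

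\textbf{Counterexample to Step~3.} Take $n=3$, $w'=(-1,0,1)=[-2,2,6]$ and $w=[-1,1,6]$.
\begin{itemize}
\item Here $\bar w=(1\,2)$, and (L.2) holds.
\item Condition (A) holds, since $s_1<s_2<s_3$.
\item Condition (C) holds. The only pair with $r'_{\sigma(i)}>r'_{\sigma(j)}$ is $(1,2)$, and there $s_2=1<s_1+3$.
\item Lemma~\ref{le:ell3} gives $\ell(w)=3$ and $\ell(w')=4$. Both $ww'=[-4,1,9]$ and $w'w=[-1,-2,9]$ have length $7$.
\item However $w'ww'=[-4,-2,12]$ has length $9<11$.
\end{itemize}
In terms of your induction: after peeling off index $3$, both indices $1$ and $2$ have your ``largest modulo the wrap'' property. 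So the uniqueness claim fails, and (L.2) cannot rescue it.

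\textbf{The missing idea.} You must use the stronger consequence of the hypothesis, $\ell(w'\cdot ww')=\ell(w')+\ell(ww')$. Apply it to $t=ww'=[s_1+nr'_1,\dots,s_n+nr'_n]$ rather than to $w$.
\begin{itemize}
\item From (A) one gets $t_{i_1}<\dots<t_{i_n}$.
\item Moreover $t_{i_a}+n<t_{i_b}$ whenever $a<b$ and $i_a<i_b$. This holds because then $s_{i_a}<s_{i_b}$ and $r'_{i_b}\ge r'_{i_a}+1$.
\end{itemize}
Now apply Theorem~\ref{th:ell5}\,(c) to the product $w'\cdot t$, where $\bar t(i)\equiv t_i \pmod n$ and $\bar t=\bar w$. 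Take $a<b$.
\begin{itemize}
\item If $i_a>i_b$, the pair $(i_b,i_a)$ has $t_{i_b}>t_{i_a}$. Condition (c) then forces $r'_{\bar t(i_a)}<r'_{\bar t(i_b)}$.
\item If $i_a<i_b$, the gap $t_{i_b}-t_{i_a}$ exceeds $n$. This forces the same inequality.
\end{itemize}
Hence $r'_{\bar t(i_1)}<\dots<r'_{\bar t(i_n)}$, so $\bar t(i_k)=i_k$ for all $k$ and $\bar w=1$. Your Step~4 then finishes correctly.

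This is the paper's argument. The translation $w'$ spreads the window entries of $w$ far apart, and that is what gives condition (c) its force. Applied to $w$ itself, condition (c) is too weak, as the example shows.
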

\begin{proof}
Let $w=[s_1,\ldots,s_n]$ and 
\[t:=[t_1,\ldots,t_n]=ww'=[s_1+nr'_1,\ldots,s_n+nr'_n].\] 
We claim that $t_{i_1}<\cdots<t_{i_n}$. Let $1\le a<b\le n$. We have
$r'_{i_a}<r'_{i_b}$. There are two cases, namely $i_a>i_b$ and
$i_a<i_b$. In the first case, since $\ell(ww')=\ell(w)+\ell(w')$, by
Theorem \ref{th:ell5}, we have $s_{i_b}+n\ge
s_{i_a}$. So 
\[t_{i_b}=s_{i_b}+nr'_{i_b}\ge s_{i_a}+n(r'_{i_b}-1)\ge
  s_{i_a}+nr'_{i_a}=t_{i_a}.\] 
In the second case, we have $s_{i_a}\le s_{i_b}$ and hence  
\begin{equation}\label{Eq:t}
t_{i_a}<t_{i_a}+n= s_{i_a}+nr'_{i_a}+n \le s_{i_b}+nr'_{i_b}=t_{i_b}.
\end{equation} 
We conclude that $t_{i_a}\le t_{i_b}$ in both cases. Since the
coordinates of $t$ must be all distinct, we have 
the desired claim. 

Next, we prove that $t_j\equiv j$ for each $1\le j\le n$. For each
$1\le k\le n$, let $t_{i_k}\equiv j_k$. It suffices to prove that
$j_k=i_k$. Let $1\le a<b\le n$. We have $t_{i_a}<t_{i_b}$. We now use
the other version of Theorem \ref{th:ell5} for
$\ell(w't)=\ell(w')+\ell(t)$. Again, we consider the two cases,
 $i_a>i_b$ and $i_a<i_b$. In the first case, we must have
$r'_{t_{i_b}}>r'_{t_{i_a}}$, i.e., $r'_{j_b}>r'_{j_a}$. In the second
case, we already have $t_{i_a}+n<t_{i_b}$ (see Equation \ref{Eq:t};
$t_{i_a}$ and $t_{i_b}$ cannot be congruent modulo $n$)
and hence we must have $r'_{t_{i_a}}<r'_{t_{i_b}}$, i.e.,
$r'_{j_a}<r'_{j_b}$. Therefore, in both cases, we have
$r'_{j_a}<r'_{j_b}$. This shows that $j_k=i_k$ for all $k$ and hence
$t_j\equiv j$ for all $1\le j\le n$.

Since $s_j\equiv t_j\equiv j$, we conclude that
$w=[s_1,\ldots,s_n]\in \bZ^{n-1}$. Let $w=(r_1,\ldots,r_n)$ where
$s_j=j+nr_j$. By Corollary \ref{co:ellZ}, we must have $r_{i_1}\le
\cdots\le r_{i_n}$.  
\end{proof}

\begin{lemma}\label{le:Aw''B} 
Let $\{i_1,\dots,i_n\}=\{j_1,\dots,j_n\}=\{1,\dots,n\}$, and 
let $\Omega$ and $\Omega'$ be subsets of $\bZ^{n-1}$ such that 
$r_{i_1}\le\cdots\le r_{i_n}$ and
$r'_{j_1}\le\cdots\le r'_{j_n}$ for all
$w=(r_1,\dots,r_n)\in\Omega$ 
and $w'=(r'_1,\dots,r'_n)\in\Omega'$.
Then there exists $w''=[c_1,\ldots,c_n]$ such that
\begin{enumerate}
\item
  $c_{j_k}+n<c_{j_{k+1}}$ for all $1\le k\le n-1$, 
\item
$c_{j_k}\equiv
  i_k\pmod n$ for all $1\le k\le n$, and
\item
  $\ell(ww''w')=\ell(w)+\ell(w'')+\ell(w')$ for all $w\in \Omega$ and
  all $w'\in \Omega'$. 
\end{enumerate}
Furthermore, for each $w'=(r_1',\ldots,r_n')\in\Omega'$, 
if we write $w''w'=w'''w''$ with $w'''=(t_1,\ldots,t_n)$, 
then $\ell(w''w')=\ell(w'')+\ell(w')$,
$\ell(w'''w'')=\ell(w''')+\ell(w'')$, $t_{c_k}=r'_k$ for
each $1\le k\le n$, and $t_{i_1}\le \cdots\le t_{i_n}$.
\end{lemma}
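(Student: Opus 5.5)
The plan is to write down $w''$ explicitly in window notation, read off the ``furthermore'' statements from Corollary~\ref{co:ellww'=w''w}, and then obtain the three-term additivity (3) by moving $w'$ past $w''$ and using Theorem~\ref{th:ell5}(c).

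\emph{Construction of $w''$.} For $1\le k\le n$ put $m_k=2k-n-1$ and define $c_{j_k}=i_k+nm_k$.
\begin{itemize}
\item Condition (2) holds by construction, and the $c_i$ are pairwise incongruent mod $n$.
\item Since $\sum_k i_k=\sum_k j_k$, we get $\sum_i(c_i-i)=n\sum_k m_k=0$. So (L.1) and (L.2) hold and $w''=[c_1,\dots,c_n]\in W(\tilde A_{n-1})$.
\item We have $c_{j_{k+1}}-c_{j_k}=(i_{k+1}-i_k)+2n>n$, because $|i_{k+1}-i_k|<n$. This gives (1).
\end{itemize}

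\emph{The ``furthermore'' part.} Fix $w'\in\Omega'$. Its coordinates satisfy $r'_{j_1}\le\dots\le r'_{j_n}$, and (1) says $c_{j_k}+n<c_{j_{k+1}}$. These are exactly the hypotheses of Corollary~\ref{co:ellww'=w''w}, with $w''$ in the role of $w$ there. It gives $\ell(w''w')=\ell(w'')+\ell(w')$, $\ell(w'''w'')=\ell(w''')+\ell(w'')$ and $t_{c_k}=r'_k$. Reading subscripts mod $n$ and using (2), we get $t_{i_k}=t_{c_{j_k}}=r'_{j_k}$. Hence $t_{i_1}\le\dots\le t_{i_n}$.

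\emph{Proof of (3).} This is the main step. Write $ww''w'=ww'''w''=(ww''')w''$.
\begin{itemize}
\item Both $w$ and $w'''$ lie in $\bZ^{n-1}$, and both are non-decreasing in the order $i_1,\dots,i_n$. By Corollary~\ref{co:ellZ}, $\ell(ww''')=\ell(w)+\ell(w''')$.
\item The product $u=ww'''=(u_1,\dots,u_n)$ also satisfies $u_{i_1}\le\dots\le u_{i_n}$.
\item Apply Theorem~\ref{th:ell5}(c) to $u\,w''$. It suffices to rule out a pair $a<b$ with $u_{c_a}<u_{c_b}$ and $c_a>c_b$, or with $u_{c_a}>u_{c_b}$ and $c_a+n<c_b$.
\item By (1), the $c$'s are ordered $c_{j_1}<\dots<c_{j_n}$ with consecutive gaps exceeding $n$. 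By (2), $u_{c_{j_k}}=u_{i_k}$, which is non-decreasing in $k$.
\item So if $c_a>c_b$ then $u_{c_a}\ge u_{c_b}$. If $c_a<c_b$ then automatically $c_a+n<c_b$ and $u_{c_a}\le u_{c_b}$. Neither forbidden configuration can occur, so $\ell(uw'')=\ell(u)+\ell(w'')$.
\end{itemize}

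Combining these,
\[ \ell(ww''w')=\ell(w)+\ell(w''')+\ell(w'')=\ell(w)+\ell(w'''w'')=\ell(w)+\ell(w''w')=\ell(w)+\ell(w'')+\ell(w'). \]
Here the middle equalities use the ``furthermore'' identities and $w'''w''=w''w'$.

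The only delicate point is keeping the subscript conventions consistent: $r_{s_i}$ in Theorem~\ref{th:ell5}(c) is read mod $n$, which is what turns (2) into $u_{c_{j_k}}=u_{i_k}$. The choice of $w''$ does not depend on $w\in\Omega$ or $w'\in\Omega'$, so the conclusion holds uniformly.
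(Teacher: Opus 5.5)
Your proof is correct and follows the paper's route. It uses the same $w''$ with $c_{j_u}=n(2u-n-1)+i_u$, and the same appeal to Corollary~\ref{co:ellww'=w''w} together with $t_{i_k}=t_{c_{j_k}}=r'_{j_k}$ for the ``furthermore'' part. The one difference is in (3): the paper checks $w\cdot(w''w')$ directly, using that $s=w''w'$ has $s_{j_u}$ increasing in $u$ and $s_{j_u}\equiv i_u$, whereas you first move $w'$ across $w''$ and apply Corollary~\ref{co:ellZ} to $ww'''$. Both arguments finish with the same Theorem~\ref{th:ell5}(c) check on the same window, and your citation of part (c) is the accurate one for a translation on the left, where the paper cites (a$'$).
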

\begin{proof} 
Suppose that
$w=(r_1,\ldots,r_n)\in \Omega$ and $w'=(r'_1,\ldots,r'_n)\in \Omega'$. We define
$w''=[c_1,\ldots,c_n]$ in window notation, with 
\[ c_{j_u}=n(2u-n-1)+i_u. \]
Notice that these $c_{j_u}$ sum to $n(n+1)/2$ because the $i_u$ do,
and the additional terms add to zero. 
Let 
\[ s=[s_1,\ldots,s_n]=[c_1+nr'_1,\ldots,c_n+nr'_n]=w''w'. \]
If $1\le
u<v\le n$, then $r'_{j_u}\le r'_{j_v}$, and since $i_u$ and $i_v$ lie
between $1$ and $n$, and we have
\[ c_{j_u}=(2u-n-1)n+i_u<(2v-n-1)n+i_v=c_{j_v},\] 
and $s_{j_u}<s_{j_v}$.
By Theorem
\ref{th:ell5}\,(a$'$), it follows that $\ell(w''w')=\ell(w'')+\ell(w')$. 

We have $s_{j_u}=c_{j_u}+nr'_{j_u}\equiv c_{j_u}\equiv i_u \pmod{n}$,
and so the $j_u$th coordinate of $w(w''w')$ is 
\[ nr_{i_u}+s_{j_u}=nr_{i_u}+c_{j_u}+nr'_{j_u}. \]
If $1\le u< v\le n$ then $r_{i_u}\le r_{i_v}$ and $s_{j_u}<s_{j_v}$, and
so again using Theorem~\ref{th:ell5}\,(a$'$), we have
\[ \ell(w(w''w'))=\ell(w)+\ell(w''w')=\ell(w)+\ell(w'')+\ell(w'). \] 
Since $w''$ depends only on the $i_u$ and the $j_u$ (not on any
specific element 
in $\Omega$ nor $\Omega'$), we have the desired result for our first
assertion.  

We now prove the second assertion. This follows from
Corollary~\ref{co:ellww'=w''w}, except for 
  the inequality $t_{i_1}\le \cdots\le t_{i_n}$. However, since
  $c_{j_k}\equiv i_k$, we 
  have $t_{i_k}=t_{c_{j_k}}=r'_{j_k}$, and so $t_{i_k}=r'_{j_k}\le
  r'_{j_{k+1}}=t_{i_{k+1}}$. 
\end{proof}

\begin{theorem}\label{th:ww''w'}
Given $w$ and $w'\in W(\tilde A_{n-1})$, there exists $w''$ such that 
\[ \ell(ww''w')=\ell(w)+\ell(w'')+\ell(w'). \]
\end{theorem}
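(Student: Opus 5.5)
Our approach is to reduce to the case where both outer elements lie in the translation subgroup $\bZ^{n-1}$, and then to apply Lemma~\ref{le:Aw''B}.

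First, by Corollary~\ref{co:w'''} we choose $v$ such that $wv\in\bZ^{n-1}$ has distinct coordinates and $\ell(wv)=\ell(w)+\ell(v)$. Likewise we choose $v'$ such that $v'w'\in\bZ^{n-1}$ has distinct coordinates and $\ell(v'w')=\ell(v')+\ell(w')$. Write $x=wv=(r_1,\dots,r_n)$ and $x'=v'w'=(r'_1,\dots,r'_n)$. Since their coordinates are distinct, there are orderings $\{i_1,\dots,i_n\}$ and $\{j_1,\dots,j_n\}$ of $\{1,\dots,n\}$ with $r_{i_1}<\dots<r_{i_n}$ and $r'_{j_1}<\dots<r'_{j_n}$. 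We then apply Lemma~\ref{le:Aw''B} with $\Omega=\{x\}$ and $\Omega'=\{x'\}$. This produces $u$ with
\[ \ell(xux')=\ell(x)+\ell(u)+\ell(x'). \]
Now set $w''=vuv'$. Then $ww''w'=xux'$, and we compute
\[ \ell(ww''w')=\ell(x)+\ell(u)+\ell(x')=\ell(w)+\ell(v)+\ell(u)+\ell(v')+\ell(w'). \]

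It remains to compare this with $\ell(w)+\ell(w'')+\ell(w')$. By subadditivity, $\ell(w'')\le \ell(v)+\ell(u)+\ell(v')$. Conversely,
\[ \ell(ww''w')\le \ell(w)+\ell(w'')+\ell(w'), \]
again by subadditivity of length. Combining the two inequalities with the displayed equality forces $\ell(w'')=\ell(v)+\ell(u)+\ell(v')$, and hence $\ell(ww''w')=\ell(w)+\ell(w'')+\ell(w')$.

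The main obstacle is making sure the hypotheses of Lemma~\ref{le:Aw''B} are met exactly. Its conditions are non-strict orderings, which our strict orderings certainly satisfy. Beyond that, the reduction step is routine: it relies only on Corollary~\ref{co:w'''}, which supplies the translation elements with distinct coordinates, and on the fact that length is subadditive and multiplication is associative. So once the reduction is set up, the argument is essentially bookkeeping of lengths. Note that the distinctness of coordinates is not even strictly needed, since any element of $\bZ^{n-1}$ admits a non-decreasing ordering, but using Corollary~\ref{co:w'''} is the natural way to land in $\bZ^{n-1}$ additively in the first place.
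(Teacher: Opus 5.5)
Your proposal is correct and is essentially the paper's own proof. Like the paper, you use Corollary~\ref{co:w'''} to extend $w$ on the right and $w'$ on the left additively into $\bZ^{n-1}$, apply Lemma~\ref{le:Aw''B} to the two resulting translations, and take $w''$ to be the product of the three connecting elements, with subadditivity of length forcing both inequalities to be equalities.
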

\begin{proof}
The special case when $w,w'\in\bZ^{n-1}$ has been dealt with in Lemma
\ref{le:Aw''B}. For the general case, given $w$ and $w'$, using
Corollary~\ref{co:w'''} we find elements $a$ and $b$ such that $wa$
and $bw'$ are in $\bZ^{n-1}$, $\ell(wa)=\ell(w)+\ell(a)$, and
$\ell(bw')=\ell(b)+\ell(w')$. Then using the special case proved
above, we may find $c$ such that 
\begin{align*}
\ell(w(acb)w')&=\ell((wa)c(bw'))\\
&=\ell(wa)+\ell(c)+\ell(bw')\\
&=\ell(w)+\ell(a)+\ell(c)+\ell(b)+\ell(w')\\
&\ge\ell(w)+\ell(acb)+\ell(w')\\
&\ge\ell(w(acb)w').
\end{align*}
It follows that both inequalities are equalities, and the theorem is
proved with $w''=acb$.
\end{proof}

\begin{proposition}\label{pr:w'w''}
Given $i$ with $0\le i\le n-1$ and $v=(r_1,\dots,r_n)\in\bZ^{n-1}$ in
vector notation, with $\bar w_i(v)\ne v$ (cf.\ 
Definition~\ref{def:wbar}), i.e.,
\[ \begin{cases} r_i\ne r_{i+1} & 1\le i\le n-1 \\ r_n\ne r_1 &
    i=0, \end{cases} \] 
the elements $w'_i=vw_i$,
$w''_i=w_iv$ in 
$W(\tilde A_{n-1})$ both have image $\bar w_i$ under the
map~\eqref{eq:ZWS}, and we have
$\ell(w'_iw''_i)=\ell(w'_i)+\ell(w''_i)$, and
$w'_iw''_i=v^2=(2r_1,\dots,2r_n)$.
\end{proposition}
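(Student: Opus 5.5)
The plan is to reduce everything to the length formula on $\bZ^{n-1}$ together with Corollary~\ref{co:ell6}.

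First, the images. Since $v\in\bZ^{n-1}$ lies in the kernel of the map~\eqref{eq:ZWS}, both $w'_i=vw_i$ and $w''_i=w_iv$ map to $\bar v\,\bar w_i=\bar w_i$ and $\bar w_i\,\bar v=\bar w_i$ respectively. For the product, $w_i^2=1$ gives
\[ w'_iw''_i=vw_iw_iv=v^2. \]
By Corollary~\ref{co:ellZ}, $v^2=(2r_1,\dots,2r_n)$ in vector notation. Then Lemma~\ref{le:ell4} gives
\[ \ell(v^2)=\sum_{1\le a<b\le n}|2r_b-2r_a|=2\ell(v). \]

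So it remains to show $\ell(vw_i)+\ell(w_iv)=2\ell(v)$. I will use the standard Coxeter group fact that multiplying by a simple reflection changes the length by exactly $\pm1$. Hence $\ell(vw_i)=\ell(v)\pm1$ and $\ell(w_iv)=\ell(v)\pm1$, and the task is to show the signs are opposite.

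This is exactly what Corollary~\ref{co:ell6} provides, using the hypothesis $\bar w_i(v)\ne v$. Take $1\le i\le n-1$. If $r_i<r_{i+1}$, then $\ell(w_iv)<\ell(v)$, so $\ell(w_iv)=\ell(v)-1$. Also $r_i>r_{i+1}$ fails, so $\ell(vw_i)\not<\ell(v)$, forcing $\ell(vw_i)=\ell(v)+1$. The case $r_i>r_{i+1}$ is symmetric. For $i=0$ the same argument applies with the pair $(r_n,r_1)$ in place of $(r_i,r_{i+1})$. In every case the sum is $2\ell(v)=\ell(v^2)=\ell(w'_iw''_i)$, as required.

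The only step needing care is excluding a length-preserving multiplication, that is, confirming $\ell(vw_i)\ne\ell(v)$. The $\pm1$ parity property of Coxeter groups handles this, so I do not expect a genuine obstacle.
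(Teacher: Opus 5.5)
Your proof is correct and follows essentially the paper's argument: the paper also computes $w'_iw''_i=vw_iw_iv=v^2$ with $\ell(v^2)=2\ell(v)$, and uses Theorem~\ref{th:ell5} (of which Corollary~\ref{co:ell6} is the specialisation you invoke) to see that one of $\ell(vw_i)$, $\ell(w_iv)$ equals $\ell(v)+1$ and the other $\ell(v)-1$. Your explicit appeal to the $\pm1$ property of Coxeter length, and your case check using $r_i\ne r_{i+1}$, spell out a step the paper leaves implicit.
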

\begin{proof}
By Theorem~\ref{th:ell5} we have
$\ell(w'_i)=\ell(v)\pm 1$, $\ell(w''_i)=\ell(v)\mp 1$. 
We also have $w'_iw''_i=vw_iw_iv=v^2$, so
$\ell(w'_iw''_i)=2\ell(v)=\ell(w'_i)+\ell(w''_i)$. 
\end{proof}

\section{The affine nilCoxeter algebra}\label{se:N}

Let $A=\mcN(\tilde A_{n-1})$ be the nilCoxeter algebra of type $\tilde
A_{n-1}$ over a field $k$. This is generated by elements $Y_0$, $Y_1$, \dots, 
$Y_{n-1}$, and with relations 
$Y_i^2=0$, $Y_iY_{i+1}Y_i=Y_{i+1}Y_iY_{i+1}$, and $Y_iY_j=Y_jY_i$, if 
$|i-j|\ge 2$,
where the indices in the relations are to be read modulo $n$. 
These relations are obtained from the relations for
$W(\tilde A_{n-1})$ by replacing $w$ with $Y$ throughout, except that 
the relation $w_i^2=1$ is replaced with $Y_i^2=0$. 
If $w=w_{i_1}\dots w_{i_m}$ is a reduced word in $W(\tilde A_{n-1})$,
we write $Y_w$ for the corresponding word $Y_{i_1}\dots Y_{i_m}$ in
$A$. Reduced words for the same $w$ give equal $Y_w$, so this notation
is not ambiguous. These reduced words $Y_w$ form a basis for $A$. Thus $A$ is
the semigroup algebra of the semigroup generated by 
$Y_0,\dots,Y_{n-1}$ and an absorbing element $0$, with these 
relations. 
The length of $Y_w$ is defined to be $\ell(Y_w)=\ell(w)$;
we do not assign a length to zero,
nor to elements that are not monomials, but we do assign length zero
to the identity element. The
relations are all homogeneous with respect to length, so it is
possible to use length to regard $A$ as an algebra graded by the
non-negative integers. The multiplication of words $Y_w$ and $Y_{w'}$
is given by
\begin{equation}\label{eq:YwYw'}
Y_wY_{w'}=\begin{cases} Y_{ww'} & \ell(ww')=\ell(w)+\ell(w') \\ 0 &
    \ell(ww') < \ell(w)+\ell(w'). \end{cases} 
\end{equation}

\begin{remark}\label{rk:anti-aut}
There is an anti-automorphism of $A$ that fixes each $Y_i$ and
reverses every word in the $Y_i$. Thus we have $A\cong A^\op$.
\end{remark}

One obvious consequence of the definition is the following.

\begin{lemma}\label{le:cancel}
Let $w$, $w'$ and $w''$ be elements of $W(\tilde A_{n-1})$. In the
algebra $A$, if
$Y_wY_{w''}=Y_{w'}Y_{w''}\ne 0$, or if
$Y_{w''}Y_w=Y_{w''}Y_{w'}\ne 0$,  then $w=w'$ and $Y_w=Y_{w'}$. 
\end{lemma}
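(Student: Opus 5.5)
The proof reads everything off the multiplication rule \eqref{eq:YwYw'}. I treat the first hypothesis, $Y_wY_{w''}=Y_{w'}Y_{w''}\ne 0$; the second follows by the same argument on the other side, or by applying the anti-automorphism of Remark~\ref{rk:anti-aut}, which reverses words and so swaps left and right multiplication.

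First, since $Y_wY_{w''}\ne 0$, the rule \eqref{eq:YwYw'} forces $\ell(ww'')=\ell(w)+\ell(w'')$ and $Y_wY_{w''}=Y_{ww''}$. Likewise $Y_{w'}Y_{w''}$ is nonzero, because it equals the same element, so $Y_{w'}Y_{w''}=Y_{w'w''}$. Thus $Y_{ww''}=Y_{w'w''}$.

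Next, the elements $Y_u$ for $u\in W(\tilde A_{n-1})$ form a basis of $A$, so in particular distinct group elements give distinct basis vectors. Hence $ww''=w'w''$ in the group $W(\tilde A_{n-1})$. Cancelling $w''$ on the right in the group gives $w=w'$, and therefore $Y_w=Y_{w'}$.

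There is no real obstacle here. The only point needing care is that the products are nonzero, which is what excludes the second case of \eqref{eq:YwYw'}.
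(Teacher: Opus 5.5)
Your proposal is correct and is essentially the paper's argument: a nonzero product $Y_wY_{w''}$ equals $Y_{ww''}$ by \eqref{eq:YwYw'}, so $ww''=w'w''$, and cancelling $w''$ in the group gives $w=w'$. You also make explicit the linear independence of the basis $\{Y_u\}$, which the paper uses implicitly, and you handle the other case via the anti-automorphism (which sends $Y_u$ to $Y_{u^{-1}}$) rather than by repeating the argument.
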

\begin{proof}
If $Y_wY_{w''}\ne 0$ then by \eqref{eq:YwYw'} it is equal to $Y_{ww''}$. So if
$Y_wY_{w''}=Y_{w'}Y_{w''}\ne 0$ then $ww''=w'w''$ in 
$W(\tilde A_{n-1})$, and so $w=w'$. Similarly for the other case.
\end{proof}

\begin{proposition}\label{pr:genfn}
Let $J$ be the ideal in $A$ generated by
$Y_0,\dots,Y_{n-1}$. Then the generating function for the dimensions
of the quotients $J^i/J^{i+1}$ (where $J^0=A$) is given by
\begin{equation*}
\sum_{i=0}^\infty t^i\dim_k J^i/J^{i+1}=\frac{1-t^n}{(1-t)^n}.
\end{equation*}
\end{proposition}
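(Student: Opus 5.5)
The plan is to identify $J^i$ explicitly as the span of the basis elements $Y_w$ with $\ell(w)\ge i$. After that, the generating function is exactly the Poincar\'e series of $W(\tilde A_{n-1})$ from Corollary~\ref{co:Poincareseries}.

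First, recall that the $Y_w$ for $w\in W(\tilde A_{n-1})$ form a basis of $A$. Since the defining relations are homogeneous in length, $A=\bigoplus_{m\ge0}A_m$ is graded, with $A_m$ spanned by the $Y_w$ with $\ell(w)=m$. The ideal $J$ is generated by the degree-one elements $Y_0,\dots,Y_{n-1}$, so it is a graded ideal. I claim that
\[ J^i=\bigoplus_{m\ge i}A_m . \]
For the inclusion $\subseteq$, note that $J^i$ is spanned by products $aY_{j_1}b_1Y_{j_2}\cdots Y_{j_i}b_i$ with $a,b_1,\dots,b_i\in A$. Expanding each factor in the monomial basis and using \eqref{eq:YwYw'}, every product of monomials is either zero or a single $Y_{w}$. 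Its length is the sum of the lengths of the factors, which is at least $i$.

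For the inclusion $\supseteq$, take $\ell(w)=m\ge i$ and choose a reduced word $w=w_{j_1}\cdots w_{j_m}$. Then $Y_w=Y_{j_1}\cdots Y_{j_m}$ is a product of $m\ge i$ generators, each lying in $J$, so $Y_w\in J^m\subseteq J^i$.

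Consequently $J^i/J^{i+1}\cong A_i$, which has basis $\{Y_w:\ell(w)=i\}$. Its dimension is therefore $|\{w\in W(\tilde A_{n-1}):\ell(w)=i\}|$, and this set is finite by Corollary~\ref{co:Poincareseries}. Summing over $i$ gives
\[ \sum_{i\ge0}t^i\dim_kJ^i/J^{i+1}=\sum_{w\in W(\tilde A_{n-1})}t^{\ell(w)}=\frac{1-t^n}{(1-t)^n}, \]
again by Corollary~\ref{co:Poincareseries}.

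The only point needing care is the inclusion $J^i\subseteq\bigoplus_{m\ge i}A_m$. This relies on the fact that each product of basis monomials is $0$ or a monomial of additive length, which is precisely \eqref{eq:YwYw'}. Everything else is bookkeeping, so I do not expect a genuine obstacle here.
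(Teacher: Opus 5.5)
Your proposal is correct and follows the same approach as the paper. The paper's proof is the one-line remark that the result follows from Corollary~\ref{co:Poincareseries}. You supply the routine step it leaves implicit: since the relations are length-homogeneous, $J^i=\bigoplus_{m\ge i}A_m$, so $\dim_k J^i/J^{i+1}$ is the number of $w$ with $\ell(w)=i$.
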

\begin{proof}
This follows from Corollary~\ref{co:Poincareseries}.
\end{proof}

\begin{remark}
The expression in Proposition~\ref{pr:genfn} is
the generating function for the number of distinct non-zero
monomial elements of $A$, counted by length. 
\end{remark}

\begin{lemma}\label{le:prime}
Given words $w$ and $w'$ there exists a word $w''$ such that
$Y_wY_{w''}Y_{w'}\ne 0$.
\end{lemma}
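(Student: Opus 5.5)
This follows from Theorem~\ref{th:ww''w'} together with the multiplication rule~\eqref{eq:YwYw'}. Here ``words'' $w$, $w'$ means elements of $W(\tilde A_{n-1})$, with $Y_w$, $Y_{w'}$ the corresponding basis monomials of $A$. By Theorem~\ref{th:ww''w'} we choose $w''\in W(\tilde A_{n-1})$ with
\[ \ell(ww''w')=\ell(w)+\ell(w'')+\ell(w'). \]

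The key step is to show that the intermediate products are also length-additive. By subadditivity of length,
\[ \ell(ww''w')\le \ell(ww'')+\ell(w')\le \ell(w)+\ell(w'')+\ell(w'). \]
Since the two ends are equal, both inequalities are equalities. Hence $\ell(ww'')=\ell(w)+\ell(w'')$, and then also $\ell((ww'')w')=\ell(ww'')+\ell(w')$.

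Applying~\eqref{eq:YwYw'} twice gives $Y_wY_{w''}=Y_{ww''}$, and then $Y_{ww''}Y_{w'}=Y_{ww''w'}$. This is a basis element of $A$, so it is nonzero.

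There is no real obstacle here: all the combinatorial work is in Theorem~\ref{th:ww''w'}. The only point needing care is the extraction of the intermediate length additivity from the total one, which the sandwich of inequalities above handles.
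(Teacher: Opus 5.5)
Your proposal is correct and follows the paper's approach: the paper's proof is the one-line appeal to Theorem~\ref{th:ww''w'}, and you combine that theorem with the multiplication rule~\eqref{eq:YwYw'}. Your sandwich-of-inequalities argument correctly supplies the length additivity of the intermediate product $ww''$, which the paper leaves implicit.
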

\begin{proof}
This follows from Theorem~\ref{th:ww''w'}.
\end{proof}

\begin{notation}\label{not:Y}
We shall sometimes write $Y_{i_1\dots i_m}$ for $Y_{i_1}\dots Y_{i_m}$
for convenience, especially in the examples at the end of the
paper. So for example $Y_1Y_2Y_1$ is abbreviated to $Y_{121}$. 
\end{notation}

\section{A large commutative subalgebra}\label{se:C}

The algebra $A$ has a large commutative subalgebra $C$.
This is the subalgebra
spanned by the words corresponding to the words in $W(\tilde A_{n-1})$
for elements of the normal subgroup $\bZ^{n-1}$. 
Then $C$ has a basis corresponding to the integer vectors
$(r_1,\dots,r_n)$ with $\sum_i r_i=0$, and we refer to this as 
\emph{vector notation} for elements of $C$. If $v$ is such a vector, let us
write $Z_v$ for the corresponding basis element of $C$. 

\begin{lemma}\label{le:ZvZv'}
If $v=(r_1,\dots,r_n)$ is a vector in $\bZ^{n-1}$, define the
\emph{length} of $v$ to be 
\[ \ell(v)=\sum_{i<j}|r_i-r_j|, \]
and the length of $Z_v$ to be $\ell(Z_v)$ as defined at the beginning
of Section~\ref{se:N}.
Then the length function on basis elements of $C$ is given by
$\ell(Z_v)=\ell(v)$.

The multiplication on $C$ is given as follows.
Let $v=(r_1,\dots,r_n)$, $v'=(r'_1,\dots,r'_n)\in\bZ^{n-1}$. If there
are coordinates $i$, $j$ with $r_i<r_j$ and $r'_i>r'_j$ then
$Z_vZ_{v'}=0$. Otherwise we have
$Z_vZ_{v'}=Z_{v+v'}$.
\end{lemma}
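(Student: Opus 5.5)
The plan is to reduce both assertions to facts already established about lengths in $W(\tilde A_{n-1})$, using the defining multiplication rule \eqref{eq:YwYw'} for basis words of $A$.

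For the length statement, $Z_v$ is by definition the basis element $Y_w$ of $A$ for $w=v\in\bZ^{n-1}\subseteq W(\tilde A_{n-1})$. Its length is defined as $\ell(Y_w)=\ell(w)$. Lemma~\ref{le:ell4} gives
\[ \ell(r_1,\dots,r_n)=\sum_{1\le i<j\le n}|r_j-r_i|, \]
which is exactly the formula $\ell(v)$ in the statement. So $\ell(Z_v)=\ell(v)$ with nothing further to check.

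For the multiplication, I apply \eqref{eq:YwYw'} to $w=v$ and $w'=v'$. This gives $Z_vZ_{v'}=Y_{vv'}$ if $\ell(vv')=\ell(v)+\ell(v')$, and $Z_vZ_{v'}=0$ otherwise. By Corollary~\ref{co:ellZ}, the product in the group is $vv'=(r_1+r'_1,\dots,r_n+r'_n)=v+v'$. This lies in $\bZ^{n-1}$, so $Y_{vv'}=Z_{v+v'}$. The same corollary, via the equivalence of (1) and (2), says that length is additive exactly when there are no coordinates $i,j$ with $r_i<r_j$ and $r'_i>r'_j$. When such a pair exists we therefore get $Z_vZ_{v'}=0$, and otherwise $Z_vZ_{v'}=Z_{v+v'}$, as claimed.

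There is no real obstacle here. The only point needing care is that the condition in the statement is exactly Corollary~\ref{co:ellZ}(2). That condition is symmetric under swapping $v$ and $v'$ (relabel $i\leftrightarrow j$), which is consistent with $C$ being commutative.
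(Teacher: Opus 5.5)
Your proposal is correct and follows essentially the same route as the paper: the length formula comes directly from Lemma~\ref{le:ell4}, and the multiplication rule from combining \eqref{eq:YwYw'} with Corollary~\ref{co:ellZ}, where condition (2) of that corollary is exactly the stated non-vanishing criterion. You have simply written out the details that the paper's two-line proof leaves implicit.
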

\begin{proof}
The first statement follows from Lemma~\ref{le:ell4}.
The second statement then follows from Corollary~\ref{co:ellZ}.
\end{proof}

The algebra $C$ is $\bZ^{n-1}$-graded, with each graded piece one
dimensional, spanned by the corresponding $Z_v$. The group
$\sym{n}$ acts as algebra automorphisms on $C$ by permuting the $n$
coordinates in vector notation: for $\sigma\in \sym{n}$ and
$v=(r_1,\dots,r_n)$ we have $\sigma(v)=(r_{\sigma^{-1}(1)},\dots,r_{\sigma^{-1}(n)})$.

\begin{lemma}\label{le:YiZv}
Let $v=(r_1,\dots,r_n)\in\bZ^{n-1}$
and let $Z_v$ be the corresponding basis
vector in $C$. Then for $0\le i\le n-1$ we have 
$Y_iZ_v=Z_{\bar w_i(v)}Y_i$ (cf.~Definition~\ref{def:wbar}), and
\[ Y_iZ_v=0 \Longleftrightarrow\begin{cases}
r_i<r_{i+1}& 1\le i\le n-1\\ r_n<r_1 &i=0\end{cases}, \] 
while
\[ Z_vY_i=0\Longleftrightarrow\begin{cases}
r_i>r_{i+1}&1\le i\le n-1\\r_n>r_1&i=0.\end{cases} \]
\end{lemma}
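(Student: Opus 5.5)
We need $Y_iZ_v = Z_{\bar w_i(v)}Y_i$ together with the vanishing criteria.

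\emph{Vanishing criteria.} Write $Z_v=Y_v$, where $v$ is regarded as an element of $\bZ^{n-1}\subseteq W(\tilde A_{n-1})$. By \eqref{eq:YwYw'}, $Y_iZ_v=Y_{w_i}Y_v$ is zero exactly when $\ell(w_iv)<\ell(v)+1$. Since $\ell(w_iv)=\ell(v)\pm1$, this is the same as $\ell(w_iv)<\ell(v)$. Corollary~\ref{co:ell6} says this happens iff $r_i<r_{i+1}$ (for $1\le i\le n-1$), or $r_n<r_1$ (for $i=0$). In the same way, $Z_vY_i=0$ iff $\ell(vw_i)<\ell(v)$. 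By the second half of Corollary~\ref{co:ell6}, this is iff $r_i>r_{i+1}$, respectively $r_n>r_1$.

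\emph{The commutation relation.} In the group we have $w_iv=(w_ivw_i^{-1})w_i$. The conjugation action of $w_i$ on $\bZ^{n-1}$ only depends on $\bar w_i$. Hence $w_ivw_i^{-1}=\bar w_i(v)$, using the action \eqref{eq:action} as recorded in Theorem~\ref{th:ell5}. For $i=0$ one should check this against the translation part $(1,0,\dots,0,-1)$ of $w_0$. That part is abelian-conjugated away, so it does not matter. Set $v'=\bar w_i(v)$, so that $w_iv=v'w_i$ in $W(\tilde A_{n-1})$.

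The two sides of $Y_iZ_v=Z_{v'}Y_i$ are nonzero under conditions on the same group element $w_iv=v'w_i$. The left side is nonzero iff $\ell(w_iv)=\ell(v)+1$. The right side is nonzero iff $\ell(v'w_i)=\ell(v')+1$, and $\ell(v')=\ell(v)$ because the length formula in Lemma~\ref{le:ZvZv'} is symmetric in the coordinates. So both sides are nonzero iff $\ell(w_iv)=\ell(v)+1$, and in that case both equal $Y_{w_iv}$. Otherwise both vanish.

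One can also check this directly with the criteria above. The left side is nonzero iff $r_i\ge r_{i+1}$. The coordinates of $v'$ at positions $i,i+1$ are $r_{i+1},r_i$, so $Z_{v'}Y_i\ne0$ iff $r_{i+1}\le r_i$. These agree, including the case $r_i=r_{i+1}$, where $v'=v$ and both sides equal $Y_{w_iv}$.

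\emph{Main point to take care over.} The subtle step is confirming that $w_ivw_i^{-1}$ is exactly $\bar w_i(v)$ with the paper's convention $\sigma(v)_j=r_{\sigma^{-1}(j)}$. I would verify this directly in the permutation representation on $\bZ$ using Theorem~\ref{th:ell5}. Since $\bar w_i$ is an involution, the direction of the convention is harmless here. The $i=0$ case is where the window computation needs care.
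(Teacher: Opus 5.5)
Your proof is correct and takes essentially the same route as the paper, which simply cites Theorem~\ref{th:ell5}, Corollary~\ref{co:ell6} and \eqref{eq:YwYw'}: the vanishing criteria come from \eqref{eq:YwYw'} with Corollary~\ref{co:ell6}, and the commutation relation comes from the group identity $w_iv=\bar w_i(v)\,w_i$ (the conjugation formula in Theorem~\ref{th:ell5}) together with $\ell(\bar w_i(v))=\ell(v)$. The convention issue you flag is harmless, as you note, because $\bar w_i$ is an involution.
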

\begin{proof}
This follows from Theorem~\ref{th:ell5}, Corollary~\ref{co:ell6},
and Equation~\eqref{eq:YwYw'}.
\end{proof}

\begin{lemma}\label{le:Ywd}
Given $w\in W(\tilde A_{n-1})$, there is a positive integer $d$ such
that $Y_w^d\in C$.
\end{lemma}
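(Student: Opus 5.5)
Take $d$ to be the order of $\bar w$ in $\sym{n}$, exactly as in Lemma~\ref{le:power}, so that $w^d\in\bZ^{n-1}$. By \eqref{eq:YwYw'}, the product $Y_w^d=Y_w\cdots Y_w$ is either $0$ or equal to $Y_{w^d}$, and the latter case occurs precisely when $\ell(w^d)=d\,\ell(w)$. In either case $Y_w^d$ lies in $C$: the zero element trivially, and $Y_{w^d}$ because $w^d\in\bZ^{n-1}$, so that $Y_{w^d}=Z_v$ for the vector $v$ representing $w^d$.

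To make the second case rigorous, I would argue by induction on the number of factors, using \eqref{eq:YwYw'} repeatedly. The product $Y_w^m$ is either $0$ or $Y_{w^m}$ with $\ell(w^m)=m\ell(w)$. If it is $Y_{w^m}$, then multiplying by $Y_w$ once more gives, again by \eqref{eq:YwYw'}, either $0$ or $Y_{w^{m+1}}$. Hence $Y_w^d\in\{0,Y_{w^d}\}$, and since $C$ is by definition the span of the $Y_u$ with $u\in\bZ^{n-1}$, both possibilities lie in $C$.

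There is essentially no obstacle here. The only point needing care is that $C$ contains $0$, which it does since it is a subspace, so nothing needs to be checked about whether the power actually vanishes. If one wanted the stronger statement with $Y_w^d\ne0$, that would need a genuine length argument and would be false in general; for example, $Y_i^2=0$ for every $i$. The statement as given does not ask for it.
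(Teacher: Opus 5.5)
Your proposal is correct and follows the paper's own proof: take $d$ from Lemma~\ref{le:power} so that $w^d\in\bZ^{n-1}$, then use \eqref{eq:YwYw'} to see that $Y_w^d$ is either $Y_{w^d}\in C$ (when $\ell(w^d)=d\,\ell(w)$) or $0\in C$. Your inductive spelling-out of the repeated use of \eqref{eq:YwYw'} just makes explicit a step the paper leaves implicit.
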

\begin{proof}
By Lemma~\ref{le:power}, there is a positive integer $d$ such that
$w^d\in\bZ^{n-1}$. Using~\eqref{eq:YwYw'}, 
if $\ell(w^d)=d\ell(w)$, then $Y_w^d=Y_{w^d}\in
C$, and if not then we have $Y_w^d=0\in C$.
\end{proof}

\begin{lemma}\label{le:Cfg/k}
The algebra $C$ is finitely generated over $k$, and hence Noetherian
as a commutative ring.
\end{lemma}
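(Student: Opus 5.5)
We decompose $\bZ^{n-1}$ into finitely many cones on which multiplication in $C$ is monoid multiplication, and then use Gordan's lemma. For each total order on $\{1,\dots,n\}$, that is, each $\sigma\in\sym{n}$, let
\[ M_\sigma=\{v=(r_1,\dots,r_n)\in\bZ^{n-1}\mid r_{\sigma(1)}\le r_{\sigma(2)}\le\dots\le r_{\sigma(n)}\}. \]
Every vector lies in at least one $M_\sigma$: sort its coordinates. So $\bZ^{n-1}=\bigcup_\sigma M_\sigma$, and $C$ is spanned by the $Z_v$ with $v$ in some $M_\sigma$. If $v,v'\in M_\sigma$, there are no coordinates $i,j$ with $r_i<r_j$ and $r'_i>r'_j$. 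Indeed, $r_i<r_j$ forces $i$ to precede $j$ in the order $\sigma$, and then $r'_i\le r'_j$. Hence, by Lemma~\ref{le:ZvZv'}, $Z_vZ_{v'}=Z_{v+v'}$. The span $C_\sigma$ of $\{Z_v\mid v\in M_\sigma\}$ is therefore a subalgebra isomorphic to the monoid algebra $kM_\sigma$.

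Next we show each $M_\sigma$ is a finitely generated monoid. It is the set of lattice points of a rational polyhedral cone in the lattice $\bZ^{n-1}$, cut out by finitely many linear inequalities, so Gordan's lemma applies. Explicitly, writing $e_{\sigma}^{(m)}$ for the vector with $-(n-m)$ in the positions $\sigma(1),\dots,\sigma(m)$ and $m$ in the positions $\sigma(m+1),\dots,\sigma(n)$, we have $e_\sigma^{(m)}\in M_\sigma$. An element of $M_\sigma$ is determined by its consecutive differences $d_m=r_{\sigma(m+1)}-r_{\sigma(m)}\ge0$. A short check shows $v$ is a non-negative integer combination of the $e_\sigma^{(m)}$ when $n$ divides the appropriate quantities. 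In general one adds finitely many further lattice points of the fundamental parallelepiped, as in the standard proof of Gordan's lemma. Either way $M_\sigma$ is generated by a finite set $G_\sigma$.

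Then $C$ is generated as a $k$-algebra by the finite set $\{Z_g\mid g\in\bigcup_\sigma G_\sigma\}$. Each basis element $Z_v$ lies in some $C_\sigma$, and so it is a product of the $Z_g$ with $g\in G_\sigma$; products inside $C_\sigma$ never vanish. Noetherianity follows from the Hilbert basis theorem, since $C$ is commutative (Corollary~\ref{co:ellZ}) and finitely generated, hence a quotient of a polynomial ring in finitely many variables.

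The main point needing care is the finite generation of the cone monoid $M_\sigma$, because the lattice is the sum-zero sublattice rather than $\bZ^n$. I would simply invoke Gordan's lemma for the saturated monoid of lattice points in a rational cone, rather than producing an explicit Hilbert basis.
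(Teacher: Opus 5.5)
Your proof is correct and essentially follows the paper's argument. The paper likewise works in one chamber at a time (assuming $r_1\le\dots\le r_n$ without loss of generality) and mentions Gordan's lemma as an option. Its explicit version uses the factorisation $Z_v=Z_{v-v_i}Z_{v_i}$ whenever a consecutive gap $r_{i+1}-r_i$ exceeds $n$, where $v_i$ is your $e^{(i)}_\sigma$ for $\sigma=1$. This is exactly the fundamental-parallelepiped reduction you sketch, and it produces the finite generating set of $Z_v$ whose consecutive sorted gaps are all at most $n$.
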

\begin{proof}
We may either refer to Gordan~\cite{Gordan:1873a}, or
Dickson~\cite{Dickson:1913a}, or argue directly as follows. Suppose
without loss of generality that $v=(r_1,\dots,r_n)$ with 
$r_1\le r_2\le \dots \le r_n$. Consider
the vectors $v_1=(1-n,1,\dots,1)$, $v_2=(2-n,2-n,2,2,\dots,2)$, \dots,
$v_{n-1}=(-1,\dots,-1,n-1)$. If any $r_{i+1}-r_i$ is greater than $n$
then $\ell(v)=\ell(v-v_i)+\ell(v_i)$ and so by
Corollary~\ref{co:ellZ}, $Z_v=Z_{v-v_i}Z_{v_i}$. This shows that all
$Z_v$ with $v$ of
this form are generated by those with $r_{i+1}-r_i$ at most
$n$. There are only finitely many of these, since $\sum_i r_i=0$.
\end{proof}

\begin{proposition}\label{pr:minimal-primes}
Every associated prime in $C$ is minimal.
There are exactly $n!$ minimal primes in $C$, corresponding to the
total orders on the coordinates. The prime $\mfp$ corresponding to an
order is generated by all the $Z_v$ such that the coordinates of $v$ are
not ordered this way, and the quotient $C/\mfp$ has a basis consisting
of the $Z_v$ that are non-descending with respect to this order. The
action of $\sym{n}$ on the minimal primes is transitive, with trivial stabiliser.
The algebra $C$ is a reduced ring.
\end{proposition}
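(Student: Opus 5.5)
For a total order $\prec$ on $\{1,\dots,n\}$, let $D_\prec\subseteq\bZ^{n-1}$ be the cone of vectors $v=(r_1,\dots,r_n)$ that are non-descending for $\prec$, meaning $r_i\le r_j$ whenever $i\prec j$. Let $\mfp_\prec$ be the span of the $Z_v$ with $v\notin D_\prec$. The plan is to show that the $\mfp_\prec$ are primes, that they are all the minimal primes, that their intersection is zero, and then to read off the remaining statements.

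\textbf{Step 1: each $\mfp_\prec$ is a prime ideal.} By Lemma~\ref{le:ZvZv'}, a product $Z_vZ_{v'}$ is either $0$ or $Z_{v+v'}$. If $v\notin D_\prec$, there are $i\prec j$ with $r_i>r_j$. Then either $r'_i\ge r'_j$, in which case $v+v'\notin D_\prec$, or $r'_i<r'_j$, in which case $Z_vZ_{v'}=0$. So $\mfp_\prec$ is an ideal, and it is spanned by monomials.

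Any two vectors in $D_\prec$ are simultaneously non-descending for $\prec$. So by Corollary~\ref{co:ellZ}, $Z_vZ_{v'}=Z_{v+v'}$, and $v+v'\in D_\prec$. Hence $C/\mfp_\prec$ is the semigroup algebra $k[D_\prec]$ of the cancellative, torsion-free commutative monoid $D_\prec$. This monoid embeds in the group $\bZ^{n-1}$, so $k[D_\prec]$ is an integral domain (for instance, order $\bZ^{n-1}$ lexicographically and compare leading terms). This shows $\mfp_\prec$ is prime, and it gives the stated basis of $C/\mfp_\prec$, namely the $Z_v$ with $v\in D_\prec$.

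\textbf{Step 2: $\bigcap_\prec\mfp_\prec=0$.} Each $\mfp_\prec$ is spanned by monomials, so the intersection is spanned by the $Z_v$ that lie in every $\mfp_\prec$. But any $v$ is non-descending for some total order: sort its coordinates and break ties arbitrarily. So no $Z_v$ lies in every $\mfp_\prec$, and the intersection is $0$.

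\textbf{Step 3: consequences.} From Step 2, $C$ is reduced and the minimal primes lie among the $\mfp_\prec$. These primes are pairwise incomparable: take a strictly increasing $v$ for $\prec$, so $Z_v\notin\mfp_\prec$ but $Z_v\in\mfp_{\prec'}$ for every $\prec'\ne\prec$. Hence they are exactly the $n!$ minimal primes.

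For the associated primes, the key point is that $0=\bigcap\mfp_\prec$ is an intersection of primes. Each associated prime has the form $\operatorname{Ann}(x)$, and
\[ \operatorname{Ann}(x)=\bigcap_{\mfp_\prec\not\ni x}\mfp_\prec, \]
because $yx\in\bigcap\mfp_\prec$ forces $y\in\mfp_\prec$ whenever $x\notin\mfp_\prec$. A prime that equals a finite intersection of primes must equal one of them, so every associated prime is one of the $\mfp_\prec$, hence minimal.

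Finally, $\sigma\in\sym{n}$ maps $D_\prec$ to $D_{\sigma\prec}$, where $\sigma\prec$ is the transported order. Since $\sym{n}$ acts simply transitively on the $n!$ total orders, its action on the minimal primes is transitive with trivial stabiliser.

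\textbf{Main obstacle.} I expect the main care to be needed in Step 1's ideal check, which depends on the exact form of the vanishing criterion in Lemma~\ref{le:ZvZv'}, and in Step 2's argument that the intersection of monomial ideals is again spanned by monomials. The latter holds because $C$ is $\bZ^{n-1}$-graded with each graded piece one dimensional.
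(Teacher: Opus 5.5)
Your proof is correct, but it is organised differently from the paper's. The paper starts from the associated primes. It reduces to annihilators of monomials and computes $\operatorname{Ann}(Z_v)$ from Lemma~\ref{le:ZvZv'}. It notes that this annihilator is not prime when $v$ has two equal coordinates, and that it is your $\mfp_\prec$ when the coordinates of $v$ are distinct, so being an ideal is automatic there. Minimality then comes from incomparability, together with the fact that in a Noetherian ring the minimal primes are the minimal associated primes. Reducedness is read off at the end.

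You instead construct the $n!$ primes directly. You check the ideal property by hand, prove primality via $k[D_\prec]\subseteq k[\bZ^{n-1}]$, and prove $\bigcap_\prec\mfp_\prec=0$ first. Everything else follows from that intersection. Every prime contains $\prod_\prec\mfp_\prec$ and hence some $\mfp_\prec$. The formula $\operatorname{Ann}(x)=\bigcap_{\mfp_\prec\not\ni x}\mfp_\prec$ pins down the associated primes.

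Your ordering needs no Noetherian hypothesis and no separate reduction to monomials. That reduction is the delicate point in the paper's version. The paper asserts that the annihilator of a sum of monomials is the intersection of the monomial annihilators. This is not a general feature of rings spanned by monomials: in $k[x,y]/(x^2,y^2)$ the element $x-y$ kills $x+y$ but not $x$. In $C$ it holds precisely because of your formula for $\operatorname{Ann}(x)$, combined with the fact that each $\mfp_\prec$ is spanned by monomials.

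In exchange, the paper's route gives the explicit annihilator of every monomial and identifies which of them are prime. Your argument does not need that information.
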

\begin{proof}
An associated prime of $C$ is a prime ideal that is the annihilator of some
element, see for example Matsumura~\cite[\S6]{Matsumura:1986a}
(applied to the $C$-module $C$). The
minimal elements of the set of associated primes are exactly the
minimal primes~\cite[Theorem~6.5\,(iii)]{Matsumura:1986a}.

Since $C$ is graded by length, and is the semigroup ring of the
semigroup of monomials in it, the annihilator of a sum of monomials is
the intersection of the annihilators. If a finite intersection of ideals is
prime, then it is one of those ideals, so we can restrict our
attention to annihilators of monomials. If $v=(r_1,\dots,r_n)$, 
by Lemma~\ref{le:ZvZv'}, the
annihilator of $Z_v$ is spanned by the $Z_{v'}$ where
$v'=(r'_1,\dots,r'_n)$ and there exist coordinates $i$ and $j$ with
$r_i<r_j$ and $r'_i>r'_j$. If $v$ has two equal entries $r_i=r_j$
then there are two elements $Z_{v'}$ and
$Z_{v''}$ that do not annihilate $Z_v$, but such that $Z_{v'}Z_{v''}=0$, and
so the annihilator is not prime. On the other hand, if all the
coordinates of $v$ are distinct, then the coordinates define a total
order of the coordinates, and the annihilator is spanned by
those $Z_{v'}$ whose coordinates are not compatible with this total
order. The quotient by this annihilator has a basis consisting of the
$Z_{v'}$ where the coordinates are compatible with this total order.
These satisfy $Z_{v'}Z_{v''}=Z_{v'+v''}$, and so this quotient is an
integral domain. 

It follows that there are $n!$ prime annihilators in $C$,
corresponding to the $n!$ total orders on the coordinates. None of
these contains any other, so they are all minimal primes.
Since the intersection of the minimal primes is zero, $C$ is a reduced ring.
\end{proof}

\begin{definition}\label{def:p0}
We write $\mfp_0$ for the minimal prime ideal in $C$ corresponding to the order 
\[ 1<2<\dots<n. \] 
as in Proposition~\ref{pr:minimal-primes}. Thus $\mfp_0$ is spanned by
those $Z_v$ where $v=(r_1,\dots,r_n)$, such that $v$ does not
satisfy $r_1\le r_2\le \dots \le r_n$.
\end{definition}

The action of 
$\sym{n}$ on $C$ by permuting the $n$ coordinates in vector
notation induces 
an action on the set of ideals of $C$. By the theorem,
it acts regularly on the minimal primes. So the orbit of any
prime ideal has a representative that contains $\mfp_0$.

\begin{theorem}\label{th:fg}
The algebra $A$ is finitely generated as a right module and as a left
module over the commutative 
subalgebra $C$.
\end{theorem}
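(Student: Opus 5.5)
The plan is to show that $A$ is generated as a right $C$-module by the finitely many $Y_u$ with $u$ in an explicit finite subset of $W(\tilde A_{n-1})$. The left-module statement then follows from Remark~\ref{rk:anti-aut}. The reversal anti-automorphism sends $Y_w$ to $Y_{w^{-1}}$, and for $v\in\bZ^{n-1}$ this is $Z_{-v}$, so it maps $C$ to itself and interchanges left and right $C$-module structures. We argue by induction on $\ell(w)$ that each basis element $Y_w$ lies in $\sum_u Y_uC$. The key step is to find, whenever $w$ is ``spread out'', a nonzero $v\in\bZ^{n-1}$ and $u$ with $w=uv$ and $\ell(w)=\ell(u)+\ell(v)$. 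Then \eqref{eq:YwYw'} gives $Y_w=Y_uZ_v$ with $\ell(u)<\ell(w)$, and the induction proceeds.

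Write $w=[s_1,\dots,s_n]$ in window notation, and order the indices as $i_1,\dots,i_n$ so that $s_{i_1}<\dots<s_{i_n}$. Suppose some gap satisfies $s_{i_{k+1}}-s_{i_k}>n^2$. Take $v=(r_1,\dots,r_n)$ with $r_{i_a}=k-n$ for $a\le k$ and $r_{i_a}=k$ for $a>k$; this has sum zero and is nonzero. Put $u=[c_1,\dots,c_n]$ with $c_i=s_i-nr_i$. By Theorem~\ref{th:ell5}, $uv=[c_1+nr_1,\dots]=w$, and $u$ still satisfies (L.1), (L.2), so $u\in W(\tilde A_{n-1})$.

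To check $\ell(uv)=\ell(u)+\ell(v)$, we use the triangle-inequality computation in the proof of Theorem~\ref{th:ell5}. Length is additive precisely when, for each $i<j$, the numbers $r_j-r_i$ and $\lfloor (c_j-c_i)/n\rfloor$ do not have strictly opposite signs. Pairs in the same block have $r_i=r_j$, so they impose nothing. For a pair with $i$ in the lower block and $j$ in the upper block, we have
\[ c_j-c_i=(s_j-s_i)-n^2, \]
where $s_j-s_i>n^2$ because at least the big gap separates them. So $c_j-c_i>0$, matching $r_j-r_i=n>0$. Whether $i<j$ or $i>j$, this gives the required sign agreement: either $c_i<c_j$ with $r_i<r_j$, or, for the reversed index order, $c_j-c_i<0<n$ as needed in (a$'$).

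It remains to see that only finitely many $w$ have all consecutive gaps $s_{i_{k+1}}-s_{i_k}\le n^2$. In that case all $s_i$ lie within $(n-1)n^2$ of each other. Together with $\sum_i s_i=n(n+1)/2$, this bounds every $s_i$, so only finitely many windows occur. These $Y_u$ generate $A$ as a right $C$-module.

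The main obstacle is the sign bookkeeping in the length additivity. The stated criterion (a$'$) is phrased with non-strict inequalities that need care when $r_i=r_j$; I would redo that step directly from the formula $\sum|(r_j-r_i)+\lfloor (c_j-c_i)/n\rfloor|$ rather than quote (a$'$) verbatim. I would also choose the gap threshold ($n^2$, or larger for safety) to absorb the shift by $n\cdot n$.
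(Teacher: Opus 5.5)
Your proposal is correct and is essentially the paper's proof: the same vector $v$ (a coordinate permutation of $v_k$), the same factor $u$ with $c_i=s_i-nr_i$, the same gap threshold $n^2$, the same finiteness count via $\sum_i s_i=n(n+1)/2$, and the same anti-automorphism argument for the left-module statement. Your direct sign check of length additivity is valid, and your worry about (a$'$) when $r_i=r_j$ is unnecessary, since one of its two alternatives always holds in that case.
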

\begin{proof}
We shall prove that the algebra $A$ is finitely generated as a right
$C$-module. Since the anti-auto\-morph\-ism of $A$ described in 
Remark~\ref{rk:anti-aut} restricts to an automorphism of the
commutative subalgebra $C$,  it will follow that $A$ is
also finitely generated as a left $C$-module.

As in the proof of Lemma~\ref{le:Cfg/k}, we consider the vectors
\[ v_1=(1-n,1,\dots,1),\ v_2=(2-n,2-n,2,2,\dots,2),\ \dots,\
v_{n-1}=(-1,\dots,-1,n-1). \]

Consider a word $w$ in window notation $[s_1,\dots,s_n]$. Let
$\sigma\in \sym{n}$ satisfy $s_{\sigma(1)}\le s_{\sigma(2)}\le
\cdots \le s_{\sigma(n)}$, and let $v'_1,\dots,v'_{n-1}$ be defined by
applying $\sigma^{-1}$ to the coordinates of $v_1,\dots,v_{n-1}$ in
vector notation.
Then the $j$th entry of $v'_i$ in window notation is either $-n^2+ni+j$
or $ni+j$.

If some $s_{\sigma(i+1)}-s_{\sigma(i)}>n^2$, then let $v=v_i'$ and
  $w'$ be the word whose window notation is $[s'_1,\ldots,s'_n]$ where
  $s'_j$ is equal to $s_j+n^2-in$ if $\sigma^{-1}(j)\le i$ and
  $s_j-ni$ if $\sigma^{-1}(j)>i$. We have $w'v_i'=w$ and claim that
  $\ell(w'v)=\ell(w')+\ell(v)$. Notice
  that \[v=((v_i)_{\sigma^{-1}(1)},\ldots,
    (v_i)_{\sigma^{-1}(n)})=[1+n(v_i)_{\sigma^{-1}(1)},\ldots,
    n+n(v_i)_{\sigma^{-1}(n)}].\] Suppose on the contrary that
  $\ell(w'v)<\ell(w')+\ell(v)$. By Theorem~\ref{th:ell5}, we consider
  the case (the other case is similar) where $1\le j<k\le n$ with
  $v_j<v_k$ and $s'_j>s'_k$. The only possibility for  $v_j<v_k$ is
  when $v_j=i-n$ and $v_k=i$. So $\sigma^{-1}(j)\le i$ and
  $\sigma^{-1}(k)> i$. So we have $s_j+n^2>s_k$. But, by the choice of
  $\sigma$, we also have \[s_j\le s_{\sigma(i)}\le
    s_{\sigma(i+1)}-n^2\le s_k-n^2.\] A contradiction. It follows that
  $Y_{w'}Z_v=Y_w$.

It follows that $A$ is generated as a right module over $C$ by elements
$Y_w$ where the window notation for $w$ is $[s_1,\dots,s_n]$, and
each $s_{\sigma(i+1)}-s_{\sigma(i)}\le n^2$, where $\sigma$ is defined
as above in terms of $s_1,\dots,s_n$, and $i$ runs from $1$ to $n-1$.
 Since $\sum_{i=1}^n s_i=n(n+1)/2$, there are only a finite number of
such $w$.
\end{proof}

\begin{corollary}\label{co:PI}
$A$ is an affine PI algebra.
\end{corollary}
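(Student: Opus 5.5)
The argument has three steps: $A$ is finitely generated as a $k$-algebra, it embeds in a ring that is plainly PI, and PI passes to subalgebras. Noetherianity comes along for free from module-finiteness over $C$.

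\textbf{Affine.} $A$ is generated as a $k$-algebra by $Y_0,\dots,Y_{n-1}$, since the reduced words $Y_w$ span $A$. Alternatively, Lemma~\ref{le:Cfg/k} and Theorem~\ref{th:fg} together show that $A$ is generated by finitely many generators of $C$ together with finitely many right $C$-module generators.

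\textbf{PI.} By Theorem~\ref{th:fg}, $A$ is a finitely generated right module over the commutative subalgebra $C$. Choose generators $a_1,\dots,a_m$, so that $A=\sum_i a_iC$. Left multiplication by elements of $A$ is a right $C$-module endomorphism of $A_C$, and this gives an injective ring homomorphism
\[ A\hookrightarrow \End_C(A_C), \]
injective because $a\cdot 1=a$. The surjection $C^m\to A_C$ sending the basis to the $a_i$ makes $A_C$ a quotient of a free module. Hence $\End_C(A_C)$ is a subquotient of $\Mat_m(C)$: it is the quotient of the subring of matrices that preserve the kernel, by those matrices mapping $C^m$ into the kernel. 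There is a lifting point to check here. An endomorphism $\phi$ lifts to a matrix because $C^m$ is free: choose a preimage of each $\phi(a_i)$.

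By Amitsur--Levitzki, $\Mat_m(C)$ satisfies the standard identity $s_{2m}$, since $C$ is commutative. Identities pass to subrings and quotients, so $\End_C(A_C)$ satisfies $s_{2m}$, and therefore so does $A$.

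A cleaner route avoids the lifting check: embed $C$ in a product of fields. This works because $C$ is reduced by Proposition~\ref{pr:minimal-primes}, though Amitsur--Levitzki over any commutative ring already suffices.

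\textbf{Noetherian.} $C$ is Noetherian by Lemma~\ref{le:Cfg/k} and $A$ is a finite right $C$-module, so $A$ is a Noetherian right $C$-module. Right ideals of $A$ are $C$-submodules, so $A$ is right Noetherian. The anti-automorphism of Remark~\ref{rk:anti-aut}, or the left-module half of Theorem~\ref{th:fg}, gives left Noetherian.

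The only mildly delicate step is the subquotient argument showing $\End_C(A_C)$ inherits the identity from $\Mat_m(C)$. It is standard, and I would cite it as the usual fact that a module-finite algebra over a commutative ring is PI.
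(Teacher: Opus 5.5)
Your proof is correct and is essentially the paper's. The paper combines Theorem~\ref{th:fg} (and Lemma~\ref{le:Cfg/k}) with McConnell--Robson, Corollary~13.1.13, which is the fact that a ring module-finite over a commutative subring is PI, and your argument is the standard proof of that fact: embed $A\hookrightarrow\End_C(A_C)$, realise this as a subquotient of $\Mat_m(C)$, and apply Amitsur--Levitzki. (Your aside about embedding $C$ in a product of fields does not actually bypass the lifting step, but that step is immediate from the freeness of $C^m$, so nothing is lost.)
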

\begin{proof}
This follows from Lemma~\ref{le:Cfg/k} and 
Theorem~\ref{th:fg} together with McConnell and
Robson \cite[Corollary~13.1.13]{McConnell/Robson:2001a}. 
\end{proof}

\begin{theorem}\label{th:fd}
Every simple $A$-module is finite dimensional.
\end{theorem}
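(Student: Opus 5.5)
The plan is to deduce this from Corollary~\ref{co:PI}, either via Kaplansky's theorem or, more elementarily, via Lemma~\ref{le:Cfg/k} and Theorem~\ref{th:fg} together with the Nullstellensatz.

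\emph{Standard route.} By Kaplansky's theorem, every primitive PI algebra is a central simple algebra that is finite dimensional over its centre. For an affine PI algebra one then combines this with the fact that the centre of a primitive quotient of an affine PI algebra is a finite extension of $k$ (McConnell and Robson, Chapter~13). Together these show that every simple module is finite dimensional over $k$.

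\emph{Direct route (the one I would write).} Let $M$ be a simple $A$-module and $0\ne m\in M$, so $M=Am$. By Theorem~\ref{th:fg}, $A=\sum_{i=1}^t CY_{w_i}$ as a left $C$-module. Hence $M=\sum_i C\,Y_{w_i}m$ is a finitely generated $C$-module. Since $C$ is commutative and Noetherian (Lemma~\ref{le:Cfg/k}), $M$ has a maximal $C$-submodule $M'$, which exists because $M$ is finitely generated and nonzero. Then $M/M'\cong C/\mfm$ for some maximal ideal $\mfm$ of $C$.

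\begin{enumerate}
\item Since $C$ is finitely generated over $k$, the Nullstellensatz shows $C/\mfm$ is a finite extension of $k$, so $\mfm$ has finite codimension in $C$.
\item The main step is to show $\mfm M\ne M$ forces $\mfm^N M$-type finiteness. The cleanest way is to use a central subalgebra. By Lemma~\ref{le:Ywd}, each generator $Y_i$ satisfies $Y_i^2=0$, so I would instead aim to use that $A$ is finite over a central affine subalgebra, e.g.\ $C^{\sym{n}}$. Since $C^{\sym{n}}$ is central and $C$ is finite over it (finite group invariants of an affine algebra, Noether), $A$ is a finite module over the affine central subalgebra $Z'=C^{\sym{n}}$, which is central by the Corollary after \ref{co:ellZ} transported to $A$ via Lemma~\ref{le:YiZv}.
\item By Schur's lemma and the Nullstellensatz argument for finitely generated modules over a commutative affine ring, $Z'$ acts on the simple module $M$ through a field $Z'/\mfm'$. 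Concretely, $M$ is a finitely generated $Z'$-module. For any maximal ideal $\mfm'$ with $\mfm' M\ne M$ (which exists by Nakayama's lemma), $\mfm'M$ is an $A$-submodule because $\mfm'$ is central. Therefore $\mfm'M=0$.
\item Hence $M$ is a finitely generated module over the field $Z'/\mfm'$, which is finite over $k$. So $\dim_k M<\infty$.
\end{enumerate}

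\emph{Main obstacle.} The key check is that $C^{\sym{n}}$ is genuinely central in $A$. The earlier corollary is stated for the group algebra, and Lemma~\ref{le:YiZv} gives $Y_iZ_v=Z_{\bar w_i(v)}Y_i$, so symmetrised sums commute with each $Y_i$. This holds even when products vanish, since the relation holds termwise. One also needs that $C$ is finite over $C^{\sym{n}}$, which is standard (integral and affine).

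If this is deemed too indirect, I fall back on citing McConnell and Robson's theorem that simple modules over affine PI algebras are finite dimensional.
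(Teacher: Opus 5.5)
Your direct route is correct, but it is not how the paper proves this. The paper's proof is one line: $A$ is affine PI (Corollary~\ref{co:PI}), and McConnell--Robson, Theorem~13.10.3, says simple modules over affine PI algebras are finite dimensional. Your ``standard route'' and your fallback are exactly this.

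Your direct route instead works with the central affine subalgebra $R=C^{\sym{n}}$:
\begin{enumerate}
\item Centrality follows from Lemma~\ref{le:YiZv}, as you say; this is the first half of the proof of Theorem~\ref{th:centre}.
\item $C$ is finite over $R$ by Theorem~\ref{th:Cfg/Csigma}, so $A$ is a finite $R$-module.
\item $\mfm' M$ is then a proper $A$-submodule, hence zero, so $M$ is a finite-dimensional space over the finite extension $R/\mfm'$ of $k$.
\end{enumerate}
This avoids PI theory entirely and yields the central character of Definition~\ref{def:central-character} along the way. The price is that it uses results the paper only proves in Section~\ref{se:centre}. There is no circularity, since those results do not depend on Theorem~\ref{th:fd}.

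The paper also gives an elementary alternative in Section~\ref{se:Amodules}, and it avoids the centre altogether. It begins exactly like the $C$-module argument you abandoned: take a simple quotient $C/\mfm$ of $S{\da}_C$. Since $\mfm$ is not central, $\mfm S$ need not be a submodule, so the paper uses the adjunction
\[ \Hom_C(S{\da}_C,C/\mfm)\cong\Hom_A(S,\Hom_C(A,C/\mfm)). \]
This embeds $S$ in $\Hom_C(A,C/\mfm)$, which is finite dimensional because $A$ is finitely generated over $C$. That route needs only Lemma~\ref{le:Cfg/k} and Theorem~\ref{th:fg}.

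You can delete the confused sentence in your step~2 about ``$\mfm^N M$-type finiteness'' and $Y_i^2=0$, and the appeal to Schur's lemma in step~3. Neither is used in the argument.
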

\begin{proof}
This follows from Corollary~\ref{co:PI}
and~\cite[Theorem~13.10.3]{McConnell/Robson:2001a}. 
\end{proof}

An alternative proof will be given in Section~\ref{se:Amodules}

\begin{lemma}\label{le:primeC}
Let $x=\sum_w\lambda_w Y_w$ is a non-zero element of $A$. Then there
exist $w'$ and $w'' \in W(\tilde A_{n-1})$ and a minimal prime $\mfp$
of $C$ such that $Y_{w'} x Y_{w''}$ is an element of $C$ with non-zero
image in $C/\mfp$.
\end{lemma}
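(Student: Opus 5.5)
The plan is to reduce to a single monomial and then use Corollary~\ref{co:ww'w} to kill every term of $x$ that does not lie in $C$. Throughout, I use that by \eqref{eq:YwYw'} and Lemma~\ref{le:cancel}, left and right multiplication by fixed monomials sends distinct non-zero monomials $Y_u$ to distinct monomials, or to zero. So no cancellation can occur among the surviving terms.

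\emph{Step 1.} Fix $w$ with $\lambda_w\ne 0$. By Corollary~\ref{co:w'''} there is $a$ such that $wa=t\in\bZ^{n-1}$ has pairwise distinct coordinates and $\ell(wa)=\ell(w)+\ell(a)$. Hence $Y_wY_a=Y_t=Z_t\ne 0$. Replacing $x$ by $xY_a$ gives a non-zero element (the $Y_t$ term has coefficient $\lambda_w$ by the no-cancellation remark). We may therefore assume that $x$ has a non-zero coefficient on $Z_t$, where $t=(r_1,\dots,r_n)$ has distinct coordinates. Let $r_{i_1}<\dots<r_{i_n}$, and let $\mfp$ be the minimal prime of Proposition~\ref{pr:minimal-primes} attached to the order $i_1<\dots<i_n$.

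\emph{Step 2.} Choose $v=(r'_1,\dots,r'_n)\in\bZ^{n-1}$ with $r'_{i_1}<\dots<r'_{i_n}$, for instance $v=t$ itself. Consider $Z_vxZ_v=\sum_u\lambda_u Z_vY_uZ_v$.
\begin{itemize}
\item For the term $u=t$, the coordinates of $t$ and $v$ are ordered compatibly. By Corollary~\ref{co:ellZ}, applied twice, $Z_vZ_tZ_v=Z_{t+2v}\ne0$.
\item For any $u$ with $Z_vY_uZ_v\ne 0$, we have $\ell(vuv)=\ell(u)+2\ell(v)$. Corollary~\ref{co:ww'w} then forces $u\in\bZ^{n-1}$ with coordinates non-decreasing in the order $i_1,\dots,i_n$. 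In that case $Z_vY_uZ_v=Z_{u+2v}$ lies in $C$, and $u+2v$ is again non-decreasing in that order.
\end{itemize}
Thus $Z_vxZ_v\in C$, and it is a non-trivial linear combination of distinct basis elements $Z_{v''}$, each non-descending with respect to the order $i_1<\dots<i_n$. By Proposition~\ref{pr:minimal-primes} these form part of a basis of $C/\mfp$, so the image of $Z_vxZ_v$ in $C/\mfp$ is non-zero.

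\emph{Conclusion.} Taking $Y_{w'}=Z_v$ and $Y_{w''}=Y_aZ_v$ gives the statement; here $Y_aZ_v=Y_{av}$ is either a monomial or zero. It cannot be zero, because the product is non-zero. If one prefers to avoid that check, one can state the result with the product $Y_aZ_v$ directly.

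The main obstacle is Step 2: ensuring that every other term of $x$ either vanishes or lands in $C$, and does so non-descending for the chosen order. This is exactly what Corollary~\ref{co:ww'w} supplies, once Step 1 has arranged that the distinguished term has distinct coordinates. That is why Step 1 uses Corollary~\ref{co:w'''} rather than just any completion into $\bZ^{n-1}$.
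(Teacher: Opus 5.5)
Your proof is correct and follows essentially the paper's route: right-multiply by the element from Corollary~\ref{co:w'''} so that a surviving term becomes a translation $t$ with distinct coordinates, then sandwich with $Z_t$ (you allow any $Z_v$ ordered compatibly) and use Corollary~\ref{co:ww'w} to force every other surviving term into $C$, non-descending for the order defining $\mfp$. You also make explicit two points the paper leaves implicit: that no cancellation occurs (via Lemma~\ref{le:cancel}), and that $w''=av$ is a genuine monomial.
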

\begin{proof}
Choose $w$ with $\lambda_w\ne 0$. Multiplying on the right by a
suitable basis element, Corollary~\ref{co:w'''} shows that without loss of
generality we may assume that $w\in\bZ^{n-1}$ with no two coordinates
equal. 
If $z$ is another element with $\lambda_z\ne 0$
then by Corollary~\ref{co:ww'w}, either $Y_wY_zY_w=0$ or $Y_z\in C$ with
non-zero image in $C/\mfp$. Notice that $Y_w^3\neq 0$ by Corollary
\ref{co:ellZ}. Therefore, $Y_wxY_w$ has nonzero image in $C/\mfp$
using Proposition \ref{pr:minimal-primes}. 
\end{proof}

\begin{theorem}\label{th:prime}
The algebra $A$ is prime.
\end{theorem}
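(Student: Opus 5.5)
We must show that for nonzero $x,y\in A$ one has $xAy\ne 0$. The plan is to reduce to a computation in the commutative domain $C/\mfp$ using Lemma~\ref{le:primeC}, and then to move between minimal primes using the $\sym{n}$-symmetry and Lemma~\ref{le:Aw''B}.

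\textbf{Step 1 (reduce $x$ and $y$ into $C$).} Apply Lemma~\ref{le:primeC} to $x$ and to $y$ separately. This gives monomials with $c=Y_{a}xY_{b}\in C$ having nonzero image in $C/\mfp$, and $c'=Y_{a'}yY_{b'}\in C$ having nonzero image in $C/\mfp'$. Here $\mfp$ and $\mfp'$ are minimal primes of $C$, corresponding to total orders $i_1<\dots<i_n$ and $j_1<\dots<j_n$ respectively. It then suffices to produce $w''$ with $c\,Y_{w''}\,c'\ne 0$, since $Y_a(xY_bY_{w''}Y_{a'}y)Y_{b'}\ne0$ forces $xAy\ne0$.

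\textbf{Step 2 (choose $w''$).} Write $c=\sum\lambda_vZ_v$ and $c'=\sum\mu_{v'}Z_{v'}$. By Proposition~\ref{pr:minimal-primes}, the image of $c$ in $C/\mfp$ is $\sum_{v\in\Omega}\lambda_vZ_v$, where $\Omega$ consists of the $v$ non-descending for the order of $\mfp$. Similarly, the image of $c'$ in $C/\mfp'$ is $\sum_{v'\in\Omega'}\mu_{v'}Z_{v'}$, with $\Omega'$ non-descending for the order of $\mfp'$. Take the element $w''$ supplied by Lemma~\ref{le:Aw''B} for these orderings. Then $Z_vY_{w''}Z_{v'}=Y_{vw''v'}\neq 0$ for all $v\in\Omega$ and $v'\in\Omega'$. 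Moreover $Y_{w''}Z_{v'}=Z_{v'''}Y_{w''}$, where $v'''$ has coordinates $t_{c_k}=r'_k$ and is non-descending for the order of $\mfp$; call $\sigma$ the induced coordinate permutation, so that $v'''=\sigma(v')$.

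\textbf{Step 3 (nonvanishing).} Now
\[ cY_{w''}c'=\sum_{v,v'}\lambda_v\mu_{v'}Z_vZ_{\sigma(v')}Y_{w''}. \]
Since right multiplication by $Y_{w''}$ is injective on the nonzero monomials of $C$ (Lemma~\ref{le:cancel}), it suffices to show that $c\,\sigma(c')\neq 0$ in $C$. Here $\sigma$ is the automorphism of $C$ from the $\sym{n}$-action, and $\sigma(\mfp')=\mfp$ by the matching of orders. Reduce modulo $\mfp$: the image of $\sigma(c')$ in $C/\mfp$ is the $\sigma$-image of the image of $c'$ in $C/\mfp'$, hence nonzero. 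Since $C/\mfp$ is an integral domain, the product of the two images is nonzero, so $c\,\sigma(c')\notin\mfp$.

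The main obstacle is Step 3's bookkeeping. One must check that $\sigma$ really carries $\mfp'$ to $\mfp$; this is what the congruences $c_{j_k}\equiv i_k$ in Lemma~\ref{le:Aw''B} encode. One must also make sure that terms $Z_vZ_{\sigma(v')}$ with $v\notin\Omega$ or $v'\notin\Omega'$ cause no trouble. They cannot: all those terms lie in $\mfp$, so they do not affect the image modulo $\mfp$. The identity $Y_{w''}Z_{v'}=Z_{\sigma(v')}Y_{w''}$ itself comes from the conjugation formula in Theorem~\ref{th:ell5}, together with the length additivity in the final clause of Lemma~\ref{le:Aw''B}.
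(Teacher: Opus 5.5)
Your overall strategy is the paper's: reduce into $C$ via Lemma~\ref{le:primeC}, take $w''$ from Lemma~\ref{le:Aw''B}, commute $Y_{w''}$ past $c'$ so that it becomes $\sigma(c')$ with $\sigma(\mfp')=\mfp$, and use that $C/\mfp$ is a domain. However, there is a gap in the final reduction of Step~3. Lemma~\ref{le:cancel} only says that monomials which \emph{survive} right multiplication by $Y_{w''}$ go to distinct monomials. It says nothing about which ones survive. In fact $Z_uY_{w''}=0$ for \emph{every} monomial $Z_u\in\mfp$: this follows from Theorem~\ref{th:ell5}\,(c) together with conditions (1),(2) of Lemma~\ref{le:Aw''B}, and it is the claim ``$Z_vz=0$ for $Z_v\in\mfp$'' in the paper's proof. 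So ``it suffices to show $c\,\sigma(c')\neq 0$'' is false, since any nonzero element of $\mfp$ is annihilated by $Y_{w''}$.

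What closes the argument is the stronger fact you actually proved, $c\,\sigma(c')\notin\mfp$, together with survival of the monomials outside $\mfp$. You already have the needed ingredient. In $c\,\sigma(c')=\sum\lambda_v\mu_{v'}Z_vZ_{\sigma(v')}$, every product with $v\notin\Omega$ or $v'\notin\Omega'$ is $0$ or a monomial in $\mfp$. The remaining products are $Z_{v+\sigma(v')}$ with $v\in\Omega$, $v'\in\Omega'$, and for these $Z_vZ_{\sigma(v')}Y_{w''}=Z_vY_{w''}Z_{v'}\neq 0$ by Lemma~\ref{le:Aw''B}\,(3). Since $\mfp$ is spanned by monomials, some such $Z_u\notin\mfp$ has nonzero coefficient in $c\,\sigma(c')$. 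Its image $Y_{uw''}$ cannot be cancelled, because $uw''$ determines $u$. (The paper instead first discards the components of $x$ and $y$ in $\mfp$ and $\mfp'$, after which every remaining monomial survives and plain nonvanishing suffices.)

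A smaller point: you use $Y_{w''}Z_{v'}=Z_{\sigma(v')}Y_{w''}$ for every $v'$ in the support of $c'$, but Lemma~\ref{le:Aw''B} only gives length additivity for $v'\in\Omega'$. The identity does hold for all $v'$. Iterate Lemma~\ref{le:YiZv} along a reduced word for $w''$, or note that $w''v'=\sigma(v')w''$ and $\ell(\sigma(v'))=\ell(v')$; for $Z_{v'}\in\mfp'$ both sides are zero.
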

\begin{proof}
By definition, we must prove that given non-zero elements $x$ and $y$
of $A$, there exists $z$ such that $xzy\ne 0$. It's clearly enough to
show that there are elements $z_{1},\dots,z_{5}$ such that
$(z_{1}xz_{2})z_{3}(z_{4}yz_{5})\ne 0$.
Lemma~\ref{le:primeC} provides elements $z_1$, $z_2$, $z_4$, $z_5$
and minimal primes $\mfp$ and $\mfp'$ of $C$ 
such that $z_1xz_2$ and $z_4yz_5$  are elements of $C$ with
$z_1xz_2$ non-zero modulo $\mfp$ and $z_4yz_5$ non-zero modulo $\mfp'$.
Replacing $x$ by $z_1xz_2$ and $y$ by $z_4yz_5$, we may assume that
$x$ and $y$ are in $C$ with $x\not\in\mfp$ and $y\not\in\mfp'$.

Let $\Omega$ be the (non-empty) set consisting of $Z_v\in C$ with
$Z_v\not\in \mfp$, such that $Z_v$ appears with nonzero coefficient in $x$. Similarly,
we define the set $\Omega'$ for the prime $\mfp'$ and $y$. By
Proposition~\ref{pr:minimal-primes}, let $\{i_1,\ldots,i_n\}$ and
$\{j_1,\ldots,j_n\}$ be the total orders with respect to $\mfp$ and
$\mfp'$ respectively. Therefore, there exists $w=[c_1,\ldots,c_n]$
satisfying the conclusion of Lemma~\ref{le:Aw''B}. Let $z=Y_{w}$. Then
$Z_vzZ_{v'}\ne 0$ for each $Z_v\in \Omega$ and $Z_{v'}\in\Omega'$. For
$Z_{v'}\in\mfp'$, we claim that $zZ_{v'}=0$. By Proposition
\ref{pr:minimal-primes}, there exist $1\le a<b\le n$ such that
$t_{j_a}>t_{j_b}$ where $v'=(t_1,\ldots,t_n)$. The pair $j_a,j_b$
meets condition (2) in Theorem~\ref{th:ell5}(a). Similarly, $Z_vz=0$
for $Z_v\in\mfp$. Therefore, we may further assume that $\Omega$
(respectively, $\Omega'$) gives us all non-zero summands in $x$
(respectively, $y$). For each $Z_v\in \Omega$ and $Z_{v'}\in\Omega'$,
we use the second assertion in Lemma~\ref{le:Aw''B} to rewrite
$Z_vzZ_{v'}=Z_vZ_{\tilde v'}z$ with $Z_{\tilde v'}\not\in \mfp$
because $\tilde v'$ satisfies the total order
$i_1<\cdots<i_n$. Clearly, $Z_{\tilde v'}$'s are distinct for distinct
$Z_{v'}$'s. Therefore, $xzy=x\tilde yz$ where $\tilde
y\not\in\mfp$. Since $\mfp$ is prime and both $x,\tilde y$ are not in
$\mfp$, we must have $x\tilde y\not\in \mfp$. Lastly, $x\tilde yz\ne
0$ because, for each $Z_v\in\Omega$ and $Z_{v'}\in\Omega'$, we
have  
\begin{equation*}
\ell(vwv')=\ell(v)+\ell(w)+\ell(v')=\ell(v)+
\ell(\tilde v')+\ell(w)=\ell(v\tilde v'w).
\qedhere
\end{equation*} 
\end{proof}

\section{\texorpdfstring{The centre of $A$}{The centre of A}}\label{se:centre}

Recall that $A$ is an affine nilCoxeter algebra of type $\tilde
A_{n-1}$, and $C$ is the commutative subalgebra generated by elements
$Z_v$. The prime ideal $\mfp_0$ is spanned by those $Z_v$ with
$v=(r_1,\dots,r_n)$ that do not satisfy $r_1\le \dots\le r_n$, and the
integral domain $C/\mfp_0$ has a basis consisting of the images of those $Z_v$ that do
satisfy this condition.

In this section, we examine the invariants of the $\sym{n}$ action on
$C$. We show that $R=C^{\sym{n}}$ is an integral domain isomorphic to
  $C/\mfp_0$, and is equal to the centre in $A$.

\begin{definition}
If $v=(r_1,\dots,r_n)$ with $r_1\le \dots \le r_n$ and $\sum_{i=1}^nr_i=0$, we define 
$\hat Z_v$ to be the sum of the distinct images of $Z_v$ under the
action of $\sym{n}$.
\end{definition}

See for example Section~\ref{se:A2tilde} for the elements $Z_v$ and
$\tilde Z_v$ in the case $n=3$.

\begin{theorem}\label{th:Cfg/Csigma}
The algebra $C$ is finitely generated as a module for $R=C^{\sym{n}}$,
and $R$ is a finitely generated algebra.
\end{theorem}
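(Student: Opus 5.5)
This is the classical invariant-theory argument of Noether: a finitely generated commutative algebra is integral over the invariants of a finite group, and the invariants are finitely generated.

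\textbf{Step 1: integrality of $C$ over $R$.} By Lemma~\ref{le:Cfg/k}, $C$ is a finitely generated commutative $k$-algebra; fix generators $Z_{v_1},\dots,Z_{v_m}$. For each generator $c=Z_{v_j}$ form the polynomial
\[ P_c(T)=\prod_{\sigma\in\sym{n}}(T-\sigma(c))\in C[T]. \]
Its coefficients are elementary symmetric functions in the $\sigma(c)$. They are therefore fixed by $\sym{n}$, which acts by algebra automorphisms on $C$, and so they lie in $R$. Since $P_c(c)=0$, each generator is integral over $R$. The polynomial is monic of degree $n!$, regardless of the non-reducedness or the zero-divisors in $C$; only commutativity is used here.

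\textbf{Step 2: a finitely generated subalgebra.} Let $R_0\subseteq R$ be the $k$-subalgebra generated by all coefficients of all the $P_{Z_{v_j}}$. This is a finite set, so $R_0$ is a finitely generated $k$-algebra, hence Noetherian by the Hilbert basis theorem. Now $C$ is generated as an $R_0$-algebra by the $Z_{v_j}$, each integral over $R_0$ of degree at most $n!$. Hence $C$ is spanned as an $R_0$-module by the finitely many monomials $Z_{v_1}^{a_1}\cdots Z_{v_m}^{a_m}$ with $0\le a_j<n!$. So $C$ is a finitely generated $R_0$-module.

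\textbf{Step 3: conclusions.} Since $R_0$ is Noetherian and $C$ is a finite $R_0$-module, the submodule $R\subseteq C$ is also a finitely generated $R_0$-module. Take module generators $r_1,\dots,r_t$; then $R$ is generated as a $k$-algebra by the generators of $R_0$ together with $r_1,\dots,r_t$, so $R$ is a finitely generated algebra. Finally, $C$ is finitely generated over $R_0\subseteq R$, so it is a fortiori finitely generated as an $R$-module.

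\textbf{Main obstacle.} There is essentially none. The only point needing care is that no averaging (Reynolds operator) is used, since $|\sym{n}|$ may be zero in $k$. The argument avoids this: it relies only on Noetherianity of $R_0$ and integrality, both of which are characteristic-free.
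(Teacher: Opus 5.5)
Your proof is correct and follows essentially the same route as the paper. Both arguments go through integrality via $\prod_{\sigma\in\sym{n}}(T-\sigma(c))$, then module-finiteness of $C$ from finitely many integral algebra generators, then the Artin--Tate lemma for finite generation of $R$. The only difference is that you write out the standard proof of Artin--Tate (the Noetherian subalgebra $R_0$ generated by the coefficients) inline, where the paper cites it.
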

\begin{proof}
In general, if a finite group acts as algebra automorphisms on a
finitely generated commutative 
$k$-algebra, then the algebra is finitely generated as a module over
the invariants, and the invariants are also a
finitely generated commutative algebra.
See for example 
Bourbaki~\cite[Ch.~V, \S1, no.~9, Theorem~2]{Bourbaki:1985a}.
The steps in the proof are as follows. First show that $C$ is integral over
$R$, because each $c\in C$ satisfies
$\prod_{\sigma\in\sym{n}}(X-\sigma(c))=0$, and the coefficients are in
$R$. 
Then show using induction on a finite set of algebra
generators, that $C$ is finitely generated as a module over
$R$. Finally use the Artin--Tate
lemma~\cite[Theorem~1]{Artin/Tate:1951a} to prove that 
$R$ is finitely generated as an algebra.
\end{proof}

\begin{theorem}\label{th:Zvhat}
If $v=(r_1,\dots,r_n)$ with $r_1\le \cdots \le r_n$ and
$\sum_{i=1}^nr_i=0$, and
$v'=(r'_1,\dots,r'_n)$ with $r'_1\le \cdots \le r'_n$ and $\sum_{i=1}^nr'_i=0$ then we
have $\hat Z_v\hat Z_{v'}=\hat Z_{v+v'}$.
These elements $\hat Z_v$ span
$R$. The inclusion of $R$ in $C$ induces an
isomorphism between $R$ and $C/\mfp_0$ sending $\hat Z_v$ to $Z_v$.
\end{theorem}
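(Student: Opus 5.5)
To prove the product formula I will expand $\hat Z_v\hat Z_{v'}$ as a sum of products $Z_{\sigma(v)}Z_{\tau(v')}$ over distinct images, and use Lemma~\ref{le:ZvZv'} to decide which terms survive. A product $Z_{\sigma(v)}Z_{\tau(v')}$ is non-zero precisely when $\sigma(v)$ and $\tau(v')$ admit a common non-decreasing order of the coordinates (Corollary~\ref{co:ellZ}). In that case it equals $Z_{\sigma(v)+\tau(v')}$, and $\sigma(v)+\tau(v')$ is then a rearrangement of $v+v'$. The reason is that if a permutation $\pi$ sorts both $\sigma(v)$ and $\tau(v')$ into non-decreasing order, the sorted forms are $v$ and $v'$ themselves, so $\pi(\sigma(v)+\tau(v'))=v+v'$.

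Conversely, every distinct image $u$ of $v+v'$ must arise from exactly one surviving pair. Writing $u=\pi^{-1}(v+v')$, the pair $(\pi^{-1}v,\pi^{-1}v')$ survives. The main obstacle is uniqueness: a different $\pi'$ with $\pi'^{-1}(v+v')=u$ must give the same pair $(\pi'^{-1}v,\pi'^{-1}v')$. I handle this via stabilisers. Since $v$ and $v'$ are both non-decreasing, a coordinate transposition $(i,j)$ fixes $v+v'$ only if $r_i+r'_i=r_j+r'_j$. Monotonicity of both sequences then forces $r_i=r_j$ and $r'_i=r'_j$. Hence $\mathrm{Stab}(v+v')=\mathrm{Stab}(v)\cap\mathrm{Stab}(v')$, because all three stabilisers are Young subgroups of consecutive blocks, and on a block of constant sum both summands are constant.

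One point remains: a surviving pair $(a,b)$ with $a+b=u$ must be of the form $(\pi^{-1}v,\pi^{-1}v')$ for a single $\pi$ sorting both, so the pair is determined by $u$ up to the stabiliser. Therefore each distinct image of $v+v'$ occurs with coefficient exactly one, and $\hat Z_v\hat Z_{v'}=\hat Z_{v+v'}$.

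For spanning, note that $C$ has the monomial basis $Z_u$ permuted by $\sym{n}$. An invariant element therefore has coefficients constant on orbits, so it is a linear combination of orbit sums. Each orbit contains a unique non-decreasing representative $v$, and the orbit sum is $\hat Z_v$. The $\hat Z_v$ are moreover linearly independent, since they have disjoint supports.

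For the isomorphism, consider the composite $R\to C\to C/\mfp_0$. By Proposition~\ref{pr:minimal-primes}, $\mfp_0$ is spanned by the $Z_u$ with $u$ not non-decreasing. In the orbit of $v$ exactly one element, $v$ itself, is non-decreasing, so the map sends $\hat Z_v$ to the image of $Z_v$. These images form a basis of $C/\mfp_0$ by Proposition~\ref{pr:minimal-primes}, so the map is bijective. It is a ring homomorphism because it is the restriction of the quotient map.
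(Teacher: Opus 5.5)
Your proposal is correct and follows essentially the same route as the paper: expand the product of orbit sums, use Lemma~\ref{le:ZvZv'} to keep only commonly ordered pairs, identify each surviving sum with an image of $v+v'$, and then show the composite $R\to C\to C/\mfp_0$ is an isomorphism because it sends $\hat Z_v$ to $Z_v$. Where you go further is in justifying the uniqueness of the surviving pair via $\mathrm{Stab}(v+v')=\mathrm{Stab}(v)\cap\mathrm{Stab}(v')$, which the paper only asserts, and in writing out the orbit-sum spanning and basis-to-basis arguments that the paper leaves implicit.
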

\begin{proof}
Let $\sigma,\sigma'\in\sym{n}$, and consider the product
$Z_{\sigma(v)}Z_{\sigma'(v')}$. For this to be non-zero, by Lemma~\ref{le:ZvZv'}
there is an order for the coordinates for which the coordinates of
$\sigma(v)$ and $\sigma'(v')$ are both in order. So there is a
permutation $\sigma''\in\sym{n}$ such that $\sigma(v)=\sigma''(v)$ and
$\sigma'(v')=\sigma''(v')$. Thus
$\sigma(v)+\sigma'(v')=\sigma''(v+v')$, and
$Z_{\sigma(v)}Z_{\sigma'(v')}=Z_{\sigma''(v+v')}$. Furthermore, no
other pairs of images of $v$ and $v'$ will add to give
$\sigma''(v+v')$. Thus we have $\hat Z_v\hat Z_{v'}=\hat Z_{v+v'}$.
The composite $R\to C \to C/\mfp_0$ sends $\hat Z_v$ to $Z_v$,
and is therefore an isomorphism.
\end{proof}

\begin{corollary}\label{co:domain}
The algebra $R$ is an integral domain.
\end{corollary}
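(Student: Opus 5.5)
The plan is to deduce Corollary~\ref{co:domain} directly from Theorem~\ref{th:Zvhat}. That theorem states that the composite $R\hookrightarrow C\to C/\mfp_0$ is an isomorphism of rings. By Proposition~\ref{pr:minimal-primes}, $\mfp_0$ is a (minimal) prime ideal of $C$, so $C/\mfp_0$ is an integral domain. Transporting this property along the ring isomorphism shows that $R$ is an integral domain.

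To make the argument self-contained, I will also check the isomorphism directly at the level of bases. Write $V^+$ for the set of vectors $v=(r_1,\dots,r_n)\in\bZ^{n-1}$ with $r_1\le\dots\le r_n$.

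\emph{Spanning and independence in $R$.} The $\sym{n}$-orbit sums of the basis elements $Z_v$ span the invariants $R=C^{\sym{n}}$, because $\sym{n}$ permutes the basis $\{Z_v\}$. Every orbit contains exactly one element of $V^+$, so these orbit sums are precisely the $\hat Z_v$ with $v\in V^+$. They are linearly independent, since distinct orbits are disjoint.

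\emph{Injectivity and surjectivity.} For $v\in V^+$, the only element of the orbit of $v$ lying outside $\mfp_0$ is $v$ itself. Hence $\hat Z_v\mapsto Z_v$ in $C/\mfp_0$. Proposition~\ref{pr:minimal-primes} says that the images of the $Z_v$ with $v\in V^+$ form a basis of $C/\mfp_0$, so this map is bijective.

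\emph{Multiplicativity.} This follows from the identity $\hat Z_v\hat Z_{v'}=\hat Z_{v+v'}$ in Theorem~\ref{th:Zvhat}.

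Finally, the domain property can be seen concretely. In $C/\mfp_0$ the basis elements multiply as $Z_vZ_{v'}=Z_{v+v'}$ for $v,v'\in V^+$ (Lemma~\ref{le:ZvZv'}). Thus $C/\mfp_0$ is the semigroup algebra of the cancellative, torsion-free commutative monoid $V^+$, which embeds in the group $\bZ^{n-1}$. It is therefore a subring of the Laurent polynomial ring $k\bZ^{n-1}$, which has no zero divisors.

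No step is a real obstacle, since everything has already been established. The only point needing care is to note that the bijection in Theorem~\ref{th:Zvhat} respects products, which is exactly the content of $\hat Z_v\hat Z_{v'}=\hat Z_{v+v'}$.
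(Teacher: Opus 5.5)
Your proposal is correct and is exactly the paper's argument: $R\cong C/\mfp_0$ by Theorem~\ref{th:Zvhat}, and $\mfp_0$ is prime by Proposition~\ref{pr:minimal-primes}, so $R$ is an integral domain. One small correction: the multiplicativity you single out as the point needing care is automatic, because $R\to C/\mfp_0$ is the restriction of the quotient ring homomorphism $C\to C/\mfp_0$ to the subring $R$; your embedding of $C/\mfp_0$ into the Laurent polynomial ring $k\bZ^{n-1}$ is a nice rigorous check that $\mfp_0$ is prime.
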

\begin{proof}
Since $R$ is isomorphic to $C/\mfp_0$, and $\mfp_0$ is prime,  $R$  is an integral
domain. 
\end{proof}

\begin{lemma}\label{le:stab(p)}
Let $\mfp$ be a prime ideal of $C$ containing $\mfp_0$. Then the stabiliser of $\mfp$ in
the action of $\sym{n}$ on $C$ is a subgroup $H\le \sym{n}$ of the form
$H=\sym{n_1}\times\dots\times\sym{n_s}$, where each
$\sym{n_i}$ is acting on $n_i$ consecutive numbers between $1$ and
$n$. The group $H$ acts trivially on $C/\mfp$.
\end{lemma}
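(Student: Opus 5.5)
Work in $\bar C=C/\mfp_0$, which has basis the images of $Z_v$ with $r_1\le\dots\le r_n$ and multiplication $Z_vZ_{v'}=Z_{v+v'}$ (Proposition~\ref{pr:minimal-primes}). The prime $\mfp\supseteq\mfp_0$ corresponds to a prime $\bar\mfp$ of $\bar C$. First I will show that $\mfp$ is spanned by monomials. $C$ is graded by $\bZ^{n}$ via the vector $v$, with one-dimensional graded pieces. It is not a priori clear that an arbitrary prime is homogeneous, so I will instead describe the stabiliser through the monomials that $\mfp$ contains. Put
\[ I=\{\,i\in\{1,\dots,n-1\} : Z_v\in\mfp \text{ for every } v \text{ with } r_1\le\dots\le r_n \text{ and } r_i<r_{i+1}\,\}. \]
The blocks of consecutive integers separated by the positions $i\notin I$ give a Young subgroup $H=\sym{n_1}\times\dots\times\sym{n_s}$.

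The key observation concerns a transposition $\tau=(i,i+1)$ and an ordered $v$. If $r_i=r_{i+1}$, then $\tau(v)=v$. If $r_i<r_{i+1}$, then $\tau(Z_v)=Z_{\tau v}$ lies in $\mfp_0\subseteq\mfp$, since $\tau v$ is not non-descending. In that case $\tau(Z_v)-Z_v\equiv -Z_v \pmod{\mfp}$. So $\tau$ acts trivially on $C/\mfp$ on the image of $Z_v$ exactly when $Z_v\in\mfp$ or $r_i=r_{i+1}$. Monomials not in $\mfp_0$ span $C/\mfp_0$, hence also $C/\mfp$. It follows that $\tau$ with $i\in I$ acts trivially on $C/\mfp$ and stabilises $\mfp$: indeed $\tau(\mfp_0)$ is another minimal prime, and I need $\tau(\mfp)=\mfp$. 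That in turn follows from $\tau(c)-c\in\mfp$ for all $c\in C$, which I check on all monomials $Z_u$, not just ordered ones. An unordered $u$ has $Z_u\in\mfp_0$, but $\tau(Z_u)$ might be ordered. In that case $\tau u$ is ordered with $(\tau u)_i<(\tau u)_{i+1}$, so $Z_{\tau u}\in\mfp$ as $i\in I$. Hence $H$ acts trivially on $C/\mfp$ and $H\le\operatorname{Stab}(\mfp)$.

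For the converse, let $g\in\operatorname{Stab}(\mfp)$. Then $g(\mfp_0)\subseteq\mfp$. Pick, for each $i\notin I$, an ordered $v^{(i)}$ with $r_i<r_{i+1}$ and $Z_{v^{(i)}}\notin\mfp$. Their sum $v$ (a product in $\bar C$) satisfies $Z_v\notin\mfp$ by primality, and it strictly increases across every boundary $i\notin I$. Because $g^{-1}(\mfp)=\mfp$, we get $Z_{g^{-1}v}\notin\mfp\supseteq\mfp_0$. So $g^{-1}v$ is non-descending, which forces $g^{-1}$ to preserve the coordinates of $v$ in the right sense; since $v$ separates the blocks, $g$ preserves each block, i.e.\ $g\in H$. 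The main obstacle is carrying out this last step cleanly: translating $\sigma(v)=(r_{\sigma^{-1}(1)},\dots)$ being non-descending, with strict jumps exactly at block boundaries, into $\sigma$ mapping each block to itself. I would handle it by noting that a non-descending rearrangement of a vector is unique. So $g^{-1}v=v$, and the permutations fixing a vector whose equal-value classes are unions of blocks lie in the product of the symmetric groups on those classes, which is contained in $H$.
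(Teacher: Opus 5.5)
Your proof is correct and is essentially the paper's argument: a permutation stabilising $\mfp$ must carry each $v$ with $Z_v\notin\mfp$ to an ordered, hence equal, vector, and permutations fixing all such $v$ stabilise $\mfp$ and act trivially on $C/\mfp$. You also supply what the paper leaves terse, namely an explicit block decomposition via $I$, a single witness vector obtained from primality, and a check on all monomials (including unordered ones) that $H$ really stabilises $\mfp$ even though $\mfp$ need not be spanned by monomials.

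Two small fixes. First, delete the opening promise to show that $\mfp$ is spanned by monomials; it is false in general and you never use it. Second, the equal-value classes of your $v$ are contained in the blocks, not ``unions of blocks''. In fact they equal the blocks, since $Z_v\notin\mfp$ with $v$ ordered forces $r_i=r_{i+1}$ for $i\in I$. Containment in blocks is what gives $\operatorname{Stab}(v)\le H$.
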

\begin{proof}
Recall from Definition~\ref{def:p0} that $v=(r_1,\dots,r_n)\in\bZ^{n-1}$ satisfies
$Z_v\not\in\mfp_0$ if and only if $r_1\le\dots\le r_n$. If
$\sigma\in\sym{n}$ stabilises $\mfp$ and $Z_v\not\in\mfp$ then
$Z_v\not\in\sigma^{-1}\mfp$ and so
$\sigma(Z_v)=Z_{\sigma(v)}\not\in\mfp$. It follows that
$r_{\sigma(1)}\le\dots\le r_{\sigma(n)}$. Thus the stabiliser of
$\mfp$ is contained in
\[ H=\{\sigma\in \sym{n}\mid\forall v\in\bZ^{n-1},\ Z_v\not\in\mfp\Rightarrow
  r_{\sigma(1)}\le\dots\le r_{\sigma(n)}\}. \]
This subgroup $H$ has the form $\sym{n_1}\times\dots\times\sym{n_s}$,
where each $\sym{n_i}$ is acting on $n_i$ consecutive numbers between
$1$ and $n$.

Conversely, if $\sigma\in H$ then $\sigma$ fixes each $v\in\bZ^{n-1}$
with $Z_v\not\in\mfp$, and so it fixes which linear combinations of
these $Z_v$ land
in $\mfp$. So $H$ stabilises $\mfp$ setwise and $C/\mfp$ pointwise.
\end{proof}

\begin{theorem}\label{th:SpecC}
The spectrum $\Spec(C)$ is a union of $n!$ irreducible components, all
isomorphic to $\Spec(R)$, and regularly permuted by
$\sym{n}$. The intersections of distinct irreducible components are
equal to
the fixed points of Young subgroups of $\sym{n}$. The primes in $\Spec(C)$ lying
above a given prime in $\Spec(R)$ are transitively permuted
by $\sym{n}$, with stabiliser a Young subgroup. Thus
\[ \Spec(R)\cong\Spec(C)/\sym{n}. \]
\end{theorem}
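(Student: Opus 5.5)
The plan is to assemble the statement from Proposition~\ref{pr:minimal-primes}, Theorem~\ref{th:Zvhat}, Lemma~\ref{le:stab(p)}, and standard facts about invariants of finite groups acting on finitely generated commutative algebras.

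\emph{Irreducible components.} By Proposition~\ref{pr:minimal-primes}, the irreducible components of $\Spec(C)$ are the closed sets $V(\mfp)$ with $\mfp$ running over the $n!$ minimal primes. These are regularly permuted by $\sym{n}$, and each $V(\mfp)\cong\Spec(C/\mfp)$. Since $\mfp=\sigma(\mfp_0)$ for a unique $\sigma$, we get $C/\mfp\cong C/\mfp_0$, and $C/\mfp_0\cong R$ by Theorem~\ref{th:Zvhat}.

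\emph{Intersections of components.} Take two minimal primes $\mfp_0$ and $\sigma\mfp_0$; translating by $\sym{n}$, we may assume one of them is $\mfp_0$. Their intersection is $V(\mfp_0+\sigma\mfp_0)$. A prime $\mfq$ lies in both exactly when $\mfq\supseteq\mfp_0$ and $\sigma^{-1}\mfq\supseteq\mfp_0$. The aim is to show this is equivalent to $\sigma$ lying in the Young subgroup $H$ attached to $\mfq$ by Lemma~\ref{le:stab(p)}, i.e.\ $\sigma\mfq=\mfq$.
\begin{itemize}
\item[($\Leftarrow$)] If $\sigma\in H$, then $\sigma\mfq=\mfq$, so $\mfq\supseteq\sigma\mfp_0$ as well.
\item[($\Rightarrow$)] This direction is the main obstacle. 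Suppose $\mfq$ contains both $\mfp_0$ and $\sigma\mfp_0$. Then for every $Z_v\notin\mfq$, the vector $v$ is non-decreasing with respect to both $1<\dots<n$ and the order transported by $\sigma$. Hence $\sigma$ only permutes coordinates within blocks on which every such $v$ is constant. The set of such blocks should be exactly the partition defining $H$ in Lemma~\ref{le:stab(p)}, so $\sigma\in H$.
\end{itemize}
So the intersection of the components through $\mfp_0$ and $\sigma\mfp_0$ is the set of primes $\mfq\supseteq\mfp_0$ whose stabiliser contains $\sigma$. The orbit of such a $\mfq$ is then contained in the fixed locus of a Young subgroup, which is how I will phrase the statement.

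\emph{Fibres over $\Spec(R)$.} By Theorem~\ref{th:Cfg/Csigma}, $C$ is integral over $R$ and $R=C^{\sym{n}}$. The classical lying-over and transitivity theorem for finite group invariants (Bourbaki, Ch.~V, \S2, no.~2, Theorem~2) then gives three facts:
\begin{itemize}
\item every prime of $R$ has a prime of $C$ above it;
\item $\sym{n}$ acts transitively on the primes over a fixed prime of $R$;
\item the map $\Spec(C)\to\Spec(R)$ is surjective with fibres the $\sym{n}$-orbits.
\end{itemize}
Choose a prime $\mfq$ in a fibre. Every prime contains a minimal prime, so some translate of $\mfq$ contains $\mfp_0$. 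Lemma~\ref{le:stab(p)} then says its stabiliser is a Young subgroup, and the stabiliser of $\mfq$ itself is a conjugate of that Young subgroup.

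\emph{Quotient topology.} To upgrade the set-theoretic bijection $\Spec(C)/\sym{n}\to\Spec(R)$ to a homeomorphism, note that integral extensions satisfy going-up, so the map $\Spec(C)\to\Spec(R)$ is closed. A continuous, closed, surjective map whose fibres are the orbits identifies $\Spec(R)$ with the orbit space, which gives $\Spec(R)\cong\Spec(C)/\sym{n}$.
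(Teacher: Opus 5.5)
Your proof is correct, and its core is the paper's: you use the same inputs (Proposition~\ref{pr:minimal-primes}, Theorem~\ref{th:Zvhat}, Lemma~\ref{le:stab(p)}). The one real divergence is the fibre and quotient part.

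\textbf{Fibres and quotient topology.} You bring in integrality from Theorem~\ref{th:Cfg/Csigma}, Bourbaki's transitivity theorem and going-up. The paper's citation of Theorem~\ref{th:Zvhat} gives all of this directly:
\begin{itemize}
\item Since $R\to C/\mfp_0$ is an isomorphism, exactly one prime containing $\mfp_0$ lies over each prime of $R$.
\item Every prime of $C$ contains some minimal prime $\tau\mfp_0$, and $\tau$ fixes $R$ pointwise. So each fibre is a single $\sym{n}$-orbit, and lying-over needs no integrality.
\item Each component $V(\tau\mfp_0)$ maps homeomorphically onto $\Spec(R)$. Hence $\Spec(C)\to\Spec(R)$ is closed simply because $\Spec(C)$ is a finite union of these closed pieces, with no going-up needed.
\end{itemize}
Your route is more general: it works for any finite group acting on a commutative ring. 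The paper's route is self-contained and makes the picture of $n!$ copies of $\Spec(R)$ glued along fixed loci explicit.

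\textbf{Intersections of components.} Your $(\Rightarrow)$ step is fine once stated precisely. If $v$ and $\sigma^{-1}v$ are both non-decreasing, then $\sigma v=v$, and this is exactly the description of the stabiliser in Lemma~\ref{le:stab(p)}.

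The sentence you propose as the final phrasing is wrong, though. An orbit is not contained in the fixed locus of a single Young subgroup: the translate $\tau\mathfrak{q}$ is fixed by $\tau H\tau^{-1}$, not by $H$. What you actually proved is the following. $V(\mfp_0)\cap V(\sigma\mfp_0)$ is the set of primes $\mathfrak{q}\supseteq\mfp_0$ fixed by $Y$, where $Y$ is the smallest Young subgroup on consecutive blocks containing $\sigma$; this works because stabilisers of primes containing $\mfp_0$ have that form. Translating, $V(\rho\mfp_0)\cap V(g\rho\mfp_0)$ is the fixed locus of $g$ inside the component $V(\rho\mfp_0)$. That is the precise content of the somewhat loosely worded intersection claim, and it is how you should state it.
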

\begin{proof}
This follows from Theorem~\ref{th:Zvhat}  and Lemma~\ref{le:stab(p)}.
\end{proof}

The following theorem is proved as Proposition~9.1
in~\cite{Lam:2006a}. For the convenience of the reader, we rewrite the
proof given there in our notation.

\begin{theorem}\label{th:centre}
The algebra $R=C^{\sym{n}}$ is equal to the centre of $A$.
\end{theorem}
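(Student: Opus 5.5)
Two inclusions are needed: $R$ lies in the centre, and every central element lies in $R$.

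\emph{Step 1: $R$ is central.} Since $A$ is generated by $Y_0,\dots,Y_{n-1}$, it suffices to show that each $\hat Z_v$ (these span $R$ by Theorem~\ref{th:Zvhat}) commutes with each $Y_i$. Lemma~\ref{le:YiZv} gives $Y_iZ_u=Z_{\bar w_i(u)}Y_i$ whenever $Y_iZ_u\neq 0$, and $Y_iZ_u=0$ exactly when $u_i<u_{i+1}$ (indices read cyclically for $i=0$). Likewise $Z_uY_i=0$ exactly when $u_i>u_{i+1}$.

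Write $\hat Z_v=\sum_u Z_u$ over the $\sym{n}$-orbit of $v$; this sum is stable under $u\mapsto \bar w_i(u)$. Then
\[ Y_i\hat Z_v=\sum_{u_i\ge u_{i+1}}Z_{\bar w_i(u)}Y_i=\sum_{u'_i\le u'_{i+1}}Z_{u'}Y_i=\hat Z_vY_i, \]
where the middle equality is the substitution $u'=\bar w_i(u)$, which swaps the $i$th and $(i+1)$st coordinates. The last equality holds because the omitted terms with $u'_i>u'_{i+1}$ vanish by Lemma~\ref{le:YiZv}.

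\emph{Step 2: the centre lies in $R$.} Let $x=\sum_w\lambda_wY_w$ be central. First show $x\in C$. Since $A$ is graded by length and the $Y_i$ are homogeneous, each homogeneous component of $x$ is central, so assume $x$ is homogeneous. Pick $w$ with $\lambda_w\ne0$ and $\bar w\neq 1$. By Corollary~\ref{co:w'''} choose $w''$ with $w''w\in\bZ^{n-1}$, coordinates distinct and lengths adding, so $Y_{w''}Y_w\neq0$ lies in $C$. Then compare $Y_{w''}x$ with $xY_{w''}$ using the monomial basis and Lemma~\ref{le:cancel}; I expect this to produce a contradiction.

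A cleaner route is to commute $x$ with a regular element $Z_u\in C$ whose coordinates are distinct and very spread out. Take $Z_u$ with $u$ ordered compatibly with the $w$-term so that $Z_uY_w\neq0$, and use Theorem~\ref{th:ell5} to write $Z_uY_w=Y_{uw}$ and $Y_wZ_u=Y_{w\,u}$. Here $wu=(w u w^{-1})w$ is a different group element from $uw$ when $\bar w$ does not fix $u$, which holds since the coordinates of $u$ are distinct and $\bar w\neq1$. Because the monomials are a basis, the coefficient of $Y_{uw}$ in $Z_ux$ is nonzero. I must check that this coefficient cannot be cancelled in $xZ_u$. The plan is to choose $u$ generic (Lemma~\ref{le:Aw''B} style, with huge gaps) so that the map $w'\mapsto w'u$ restricted to the finite support of $x$ never hits $uw$. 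This bookkeeping is the main obstacle. I would try first to argue via the image in $\sym{n}$ and translation parts: $uw$ and $w'u$ differ in the $\bZ^{n-1}$-component by $u-\bar w(u)$, which can be made larger than any difference among the finitely many support elements.

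Once $x\in C$, commuting with each $Y_i$ and applying Lemma~\ref{le:YiZv} shows that the coefficient of $Z_u$ equals that of $Z_{\bar w_i(u)}$ whenever $u_i\ge u_{i+1}$. Comparing the terms of $Y_ix$ and $xY_i$ whose coordinates are strictly ordered, and handling the equal-coordinate case (where $\bar w_i(u)=u$ and the condition is trivial), gives $\bar w_i$-invariance of $x$. The $\bar w_i$ generate $\sym{n}$, so $x\in C^{\sym{n}}=R$.
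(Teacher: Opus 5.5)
Your proposal is correct and follows essentially the same route as the paper: centrality of $R$ comes from Lemma~\ref{le:YiZv}, and any support term $Y_w$ with $\bar w\ne 1$ is ruled out by comparing $Z_ux$ with $xZ_u$. For this, $u$ is strictly ordered with $Z_uY_w\ne 0$, which is a condition on the order alone by Theorem~\ref{th:ell5}\,(c), so the gaps can be taken huge; then $w'u=uw$ forces the translation parts to differ by $u-\bar w(u)$, which exceeds the finitely many differences occurring in the support. Finally, matching coefficients in $Y_ix$ and $xY_i$ gives $\sym{n}$-invariance, exactly as in the paper. You can drop the unfinished first route via Corollary~\ref{co:w'''} and the hedged wording, since your second route is already a complete argument.
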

\begin{proof}
It follows from Lemma
\ref{le:YiZv} that for all $v\in \bZ^{n-1}$ and all $0 \le i \le n -1$
we have $Y_i \hat Z_v = \hat Z_vY_i$. Since the $Y_i$ generate $A$,
$R$ belongs in the centre of $A$. 

Conversely, we let $z\in Z(A)$, and we need to show that $z\in
R$. We may assume that $z$ is homogeneous. Suppose that the
coefficient of $Y_{v\sigma}$ in $z$ is non-zero, where $v\sigma\in \bZ^{n-1}\rtimes
\sym{n}$. We claim that $\sigma=1$. Let $v=(v_1,\ldots,v_n)$ so that 
\[v\sigma=[\sigma(1)+nv_{\sigma(1)},\ldots, \sigma(n)+nv_{\sigma(n)}].\] 
Let $i_1,\ldots,i_n$ such that
$\sigma(i_j)+nv_{\sigma(i_j)}<\sigma(i_{j+1})+nv_{\sigma(i_{j+1})}$
for each $1\le j\le n-1$. Given $r=(r_1,\ldots,r_n)\in \bZ^{n-1}$
such that $r_{\sigma(i_1)}<r_{\sigma(i_2)}<\cdots<r_{\sigma(i_n)}$, 
we claim that $\ell(Z_rY_{v\sigma})=\ell(Z_r)+\ell(Y_{v\sigma})$.
Let $1\le i<j\le n$ and
$\sigma(i)+nv_{\sigma(i)}>\sigma(j)+nv_{\sigma(j)}$. Then $i=i_b$ and
$j=i_a$ for some $1\le a<b\le n$. So
$r_{\sigma(j)}<r_{\sigma(i)}$. Similarly, let $1\le i<j\le n$ and
$\sigma(i)+nv_{\sigma(i)}+n<\sigma(j)+nv_{\sigma(j)}$. Then $i=i_a$
and $j=i_b$ for some $1\le a<b\le n$. So
$r_{\sigma(i)}<r_{\sigma(j)}$. By Theorem~\ref{th:ell5}\,(c), we have
the desired claim.
Suppose to the contrary that
$\sigma\neq 1$ and let $2\le m\le n$ be the largest $m$ such that 
$q:=\sigma(i_m)\neq i_m$. Since $z$ is a finite sum, let $N$ be greater
than $v'_{q}-v_{q}$ where $v'\in\bZ^{n-1}$ runs over all
$Y_{v'\sigma}$ which appears in $z$. Notice that $\sigma^{-1}(i_m)=i_c$ for some $c<m$. We pick $r$ so that the $r_i$'s satisfy the order we mentioned above and $r_{\sigma(i_m)}-r_{\sigma(i_c)}>N$. Since $z\in Z(A)$ and $Z_rY_{v\sigma}\neq 0$, we have
$Z_rY_{v\sigma}=Y_{v'\sigma}Z_r$ for some $Y_{v'\sigma}$ which appears
in $z$. Since $rv\sigma=v'\sigma r$, we
have \[(r_1+v_1,\ldots,r_n+v_n)=(v'_1+r_{\sigma^{-1}(1)},\ldots,
  v'_n+r_{\sigma^{-1}(n)}).\] In particular,
$r_q+v_q=v'_q+r_{\sigma^{-1}(q)}$ and hence  
\[r_{\sigma(i_m)}-r_{\sigma(i_c)}=r_q-r_{\sigma^{-1}(q)}= v'_q-v_q<N.\] A contradiction. Therefore, there is no such $m$, i.e., $\sigma=1$. 
Suppose now that $v_i\neq v_{i+1}$ and $v'$ be obtained from $v$ 
by swapping $v_i$ and $v_{i+1}$. By Corollary \ref{co:ell6}, \begin{align*}
Y_iZ_v&=\left \{\begin{array}{ll}Y_{w_iv}&
\text{if $v_i>v_{i+1}$,}\\ 0&\text{otherwise,}\end{array}\right .\\
Z_{v'}Y_i&=\left \{\begin{array}{ll}Y_{v'w_i}
&\text{if $v_i>v_{i+1}$,}\\ 0&\text{otherwise,}\end{array}\right .
\end{align*} Since $v'w_i=w_iv$, we conclude that the coefficients of
$Z_v$ and $Z_{v'}$ in $a$ are the same. Therefore, $z\in
R$. 
\end{proof}

\begin{lemma}
The algebra $A$ is finitely generated as a module over $R$.
\end{lemma}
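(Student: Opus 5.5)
This is a transitivity argument: chain two finite-generation results already proved. Theorem~\ref{th:fg} says $A$ is finitely generated as a right (and as a left) $C$-module. Theorem~\ref{th:Cfg/Csigma} says $C$ is finitely generated as a module over $R=C^{\sym{n}}$.

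First, choose generators. Let $a_1,\dots,a_p\in A$ generate $A$ as a right $C$-module, so $A=\sum_i a_iC$. Let $c_1,\dots,c_q\in C$ generate $C$ as an $R$-module, so $C=\sum_j c_jR$.

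Next, substitute one expansion into the other:
\[ A=\sum_i a_iC=\sum_{i,j} a_ic_jR. \]
Hence the finitely many products $a_ic_j$ generate $A$ as a right $R$-module.

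Finally, remove any left/right distinction. By Theorem~\ref{th:centre}, $R$ is the centre of $A$, so left and right $R$-module structures on $A$ coincide. The same finite set therefore generates $A$ as a left $R$-module, and as an $R$-module in the central sense.

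There is no real obstacle. The one point to check is that the transitivity step really takes place inside $A$. This holds because $C\subseteq A$ and the right $R$-action on $A$ is restricted from the right $C$-action, since $R\subseteq C$. So the products $a_i(c_jr)=(a_ic_j)r$ are computed consistently.
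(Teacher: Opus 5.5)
Your proposal is correct and is the same argument as the paper's: combine Theorem~\ref{th:Cfg/Csigma} ($C$ finitely generated over $R$) with Theorem~\ref{th:fg} ($A$ finitely generated over $C$) by transitivity, taking the products $a_ic_j$ as generators. Your extra step using Theorem~\ref{th:centre} to pass from right to left $R$-modules is valid, though not needed, since Theorem~\ref{th:fg} already gives finite generation on both sides.
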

\begin{proof}
By Theorem~\ref{th:Cfg/Csigma} the algebra $C$ is finitely generated
as a module over $R$, 
and by Theorem~\ref{th:fg}, $A$ is finitely generated as a module over $C$.
\end{proof}

\section{Ring theory for the centre}\label{se:R}

Recall from Theorem~\ref{th:centre} that $R=C^{\sym{n}}$ is the centre
of $A$, and recall from Theorem~\ref{th:Zvhat} that the inclusion of
$R$ into $C$ induces an isomorphism $R\cong C/\mfp_0$ sending $\hat
Z_v$ to $Z_v$, so that $R$ is an integral domain. The structure of $R$
is elucidated by the following theorem.

\begin{theorem}\label{th:R}
The algebra $R$ is isomorphic to the subalgebra of
$k[X_1,\dots,X_{n-1}]$ spanned by those monomials $X_1^{m_1}\dots
X_{n-1}^{m_{n-1}}$ where $m_1+2m_2+\dots+(n-1)m_{n-1}$ is a multiple
of $n$. The isomorphism takes $\hat Z_v$ to
$X_1^{r_2-r_1}X_2^{r_3-r_2}\dots X_{n-1}^{r_n-r_{n-1}}$, where $v=(r_1,\dots,r_n)$.
\end{theorem}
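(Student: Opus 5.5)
By Theorem~\ref{th:Zvhat}, the elements $\hat Z_v$, with $v=(r_1,\dots,r_n)$ ranging over non-decreasing integer vectors with $\sum_i r_i=0$, span $R$. They are moreover linearly independent, since they are sums over disjoint $\sym{n}$-orbits of basis vectors of $C$. They multiply by $\hat Z_v\hat Z_{v'}=\hat Z_{v+v'}$. So $R$ is the semigroup algebra of the monoid
\[ M=\{v\in\bZ^n : r_1\le\dots\le r_n,\ \textstyle\sum_i r_i=0\} \]
under addition. The plan is to identify $M$ with the monoid of exponent vectors $(m_1,\dots,m_{n-1})\in\bZ_{\ge0}^{n-1}$ satisfying $\sum_j jm_j\equiv 0\pmod n$.

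Define $\phi\colon M\to\bZ_{\ge0}^{n-1}$ by $\phi(v)=(r_2-r_1,\dots,r_n-r_{n-1})$. It is additive, and its entries are non-negative because $v$ is non-decreasing. Hence the linear map $\hat Z_v\mapsto X_1^{r_2-r_1}\cdots X_{n-1}^{r_n-r_{n-1}}$ is an algebra homomorphism into $k[X_1,\dots,X_{n-1}]$. It remains to show that $\phi$ is injective and that its image is exactly the stated set.

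For injectivity, $v$ is determined by its successive differences together with one coordinate. Writing $r_i=r_1+\sum_{j<i}m_j$, the condition $\sum_i r_i=0$ gives $nr_1+\sum_{i=2}^n\sum_{j<i}m_j=0$, which determines $r_1$. For the image, count the double sum: $m_j$ appears once for each $i$ with $j<i\le n$, that is $n-j$ times. So the condition becomes
\[ nr_1+\sum_{j=1}^{n-1}(n-j)m_j=0. \]
This has an integer solution $r_1$ if and only if $\sum_j(n-j)m_j\equiv0\pmod n$, which is equivalent to $\sum_j jm_j\equiv0\pmod n$. Conversely, given such $m$, set $r_1=-\frac1n\sum_j(n-j)m_j$ and define the remaining $r_i$ by partial sums. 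The resulting vector is non-decreasing, has zero sum, and maps to $m$. Distinct elements of $M$ therefore go to distinct monomials, so the homomorphism is injective on the basis $\{\hat Z_v\}$, and its image is the span of exactly the stated monomials.

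Nothing here is a serious obstacle. The only points needing care are the linear independence of the $\hat Z_v$, which comes from the disjointness of orbits, and the bookkeeping of the coefficient $n-j$ versus $j$ modulo $n$.
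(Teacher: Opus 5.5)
Your proof is correct and follows essentially the same route as the paper: the rule $\hat Z_v\hat Z_{v'}=\hat Z_{v+v'}$ from Theorem~\ref{th:Zvhat} makes the correspondence multiplicative, and the successive differences together with the zero-sum condition determine $v$. The only difference is cosmetic. The paper anchors at $r_n$ via the identity $\sum_{j=1}^{n-1} j(r_{j+1}-r_j)=nr_n-\sum_i r_i=nr_n$, which gives the congruence $\sum_j jm_j\equiv 0 \pmod n$ directly, whereas you anchor at $r_1$ and convert $n-j$ to $-j$. You also spell out the surjectivity onto the stated monomials and the linear independence of the $\hat Z_v$, both of which the paper leaves implicit.
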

\begin{proof}
It is easy to check that this correspondence of basis elements
preserves multiplication. Note that 
\[ (r_2-r_1)+2(r_3-r_2)+\dots+(n-1)(r_n-r_{n-1})=nr_n-\sum_{i=1}^n r_i
  = nr_n \]
is a multiple of $n$. So the differences $r_{i+1}-r_i$ determine
$nr_n$, and hence determine $v$.
\end{proof}

\begin{corollary}\label{co:Noether-norm}
A Noether normalisation for $R$ is given by the elements $\hat
Z_{v_1},\dots,\hat Z_{v_{n-1}}$ where
$v_i=(i-n,\dots,i-n,i,\dots,i)$ with $i$ copies of $i-n$ and $n-i$
copies of $i$.
For every basis element $\hat Z_v$ of $R$, $\hat Z_v^n$ is a monomial
in the elements $\hat Z_{v_1}\dots,\hat Z_{v_{n-1}}$.
\end{corollary}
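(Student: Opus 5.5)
The plan is to work entirely through the isomorphism of Theorem~\ref{th:R}, which identifies $R$ with the semigroup ring on the monoid
\[ M=\{(m_1,\dots,m_{n-1})\in\bZ_{\ge0}^{n-1}\mid m_1+2m_2+\dots+(n-1)m_{n-1}\equiv 0 \pmod n\}, \]
sending $\hat Z_v$ to $X^{m}$ with $m_i=r_{i+1}-r_i$. First I compute the image of $v_i=(i-n,\dots,i-n,i,\dots,i)$. Its consecutive differences are all zero except at position $i$, where the jump is $i-(i-n)=n$. So $\hat Z_{v_i}\mapsto X_i^n$. It then suffices to show that $k[X_1^n,\dots,X_{n-1}^n]\subseteq R$ is a Noether normalisation.

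Algebraic independence of $X_1^n,\dots,X_{n-1}^n$ is clear, since distinct monomials in them are distinct monomials of the polynomial ring. For finiteness, note that each $X_i$ satisfies $X_i^n-X_i^n=0$. Hence $k[X_1,\dots,X_{n-1}]$ is a finitely generated module over $S=k[X_1^n,\dots,X_{n-1}^n]$, spanned by the monomials $X^m$ with $0\le m_i<n$. Because $S$ is Noetherian and $R$ is an $S$-submodule (indeed $X_i^n\in R$ by the computation above), $R$ is also a finitely generated $S$-module.

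Alternatively, one can argue directly within $R$. Any monomial $X^m\in R$ factors as $\prod_i (X_i^n)^{\lfloor m_i/n\rfloor}\cdot X^{m'}$ with $0\le m'_i<n$. Since $X^{m'}$ has the same weighted degree mod $n$ as $X^m$, it also lies in $R$. So the finitely many $X^{m'}$ in $R$ with $m'_i<n$ generate $R$ as an $S$-module. As a consistency check, the Krull dimension is then $n-1$.

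For the last assertion, let $\hat Z_v\mapsto X^m$. Then $(X^m)^n=\prod_i (X_i^n)^{m_i}$, which translates back to
\[ \hat Z_v^n=\prod_i \hat Z_{v_i}^{\,m_i}, \qquad m_i=r_{i+1}-r_i, \]
using the multiplicativity $\hat Z_v\hat Z_{v'}=\hat Z_{v+v'}$ from Theorem~\ref{th:Zvhat}. This can also be checked directly: $\sum_i (r_{i+1}-r_i)v_i = nv$ coordinatewise, because both sides have the same consecutive differences and both have coordinate sum zero.

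There is no real obstacle here. The only point needing care is the computation of the images of the $v_i$ and the bookkeeping showing that the reduced monomial $X^{m'}$ still lies in $R$.
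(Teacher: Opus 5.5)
Your proposal is correct and follows the same route as the paper, whose proof is simply the observation that $\hat Z_{v_1},\dots,\hat Z_{v_{n-1}}$ correspond to $X_1^n,\dots,X_{n-1}^n$ under the isomorphism of Theorem~\ref{th:R}. You also write out the routine details the paper leaves implicit, namely that $R$ is finite over $k[X_1^n,\dots,X_{n-1}^n]$ and the identity $\sum_i (r_{i+1}-r_i)v_i = nv$, and both check out.
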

\begin{proof}
These elements correspond to the elements $X_1^n,\dots,X_{n-1}^n$
under the correspondence given in Theorem~\ref{th:R}.
\end{proof}

\begin{remark}
Let $v_i=(i-n,\dots,i-n,i,\dots,i)$, as in Corollary~\ref{co:Noether-norm}.
If $i$ and $n-i$ have a common factor $d$ then a smaller normalisation
can be obtained by replacing $v$ by $v/d$, with $i$ copies of
$(i-n)/d$ and $n-i$ copies of $i/d$.
The elements $Z_v\in C$ whose orbit sums are $\hat Z_v$  are
themselves powers, but of
elements not necessarily in $C$:
$Z_v=(Y_{i+1\,i+2\,\dots\,n-1,\,i\ i-1\,\dots\,1\,0})^{i(n-i)}$,
and similarly we have
$Z_{v/d}=(Y_{i+1\,i+2\,\dots\,n-1,\,i\ i-1\,\dots\,1\,0})^{i(n-i)/d}$.
But beware that the elements 
$Y_{i+1\,i+2\,\dots\,n-1,\,i\ i-1\,\dots\,1\,0}$ no longer commute.
\end{remark}

\begin{remark}
From now on, we identify $R$ with the subalgebra of
$k[X_1,\dots,X_{n-1}]$ described in Theorem~\ref{th:R}.
\end{remark}

We can interpret Theorem~\ref{th:R} in terms of finite group schemes. Let $\mu_n$ be
the finite group scheme of $n$th roots of unity, as a subgroup of the
multiplicative group scheme $\bG_m$. The coordinate ring of $\mu_n$ is
$k[\mu_n]=k[x]/(x^n-1)$, with comultiplication $x \mapsto x\otimes x$
and antipode $x \mapsto x^{-1}$.

\begin{theorem}\label{th:mu_n}
The finite group scheme $\mu_n$ acts on $k[X_1,\dots,X_{n-1}]$ via the
map 
\[ \rho^*\colon \mu_n\times \bA^{n-1}(k) \to \bA^{n-1}(k) \] 
given by
\begin{align*}
\rho\colon k[X_1,\dots,X_{n-1}]&\to k[\mu_n]\otimes k[X_1,\dots,X_{n-1}]\\
\rho(X_i)&= x^i\otimes X_i.
\end{align*}
We have $R=k[X_1,\dots,X_{n-1}]^{\mu_n}$ and $\Spec(R)=\bA^{n-1}(k)/\mu_n$.
\end{theorem}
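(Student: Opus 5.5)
To prove Theorem~\ref{th:mu_n}, I would first check that $\rho$ is a coaction, then compute the invariants monomial by monomial, and finally identify the spectrum with the quotient.

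\textbf{Step 1: $\rho$ is a coaction.} Since $k[X_1,\dots,X_{n-1}]$ is a free commutative algebra, $\rho$ is determined as an algebra homomorphism by the images $\rho(X_i)=x^i\otimes X_i$. I will verify the two comodule-algebra axioms on the generators $X_i$; as all maps involved are algebra homomorphisms, this suffices.
\begin{itemize}
\item Coassociativity: $(\Delta\otimes 1)\rho(X_i)=x^i\otimes x^i\otimes X_i=(1\otimes\rho)\rho(X_i)$, because $\Delta(x)=x\otimes x$.
\item Counit: $(\ep\otimes 1)\rho(X_i)=X_i$, because $\ep(x)=1$.
\end{itemize}
This dualises to a morphism of schemes $\rho^*\colon\mu_n\times\bA^{n-1}\to\bA^{n-1}$ satisfying the action axioms.

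\textbf{Step 2: the invariants.} By definition, the invariants are $\{f : \rho(f)=1\otimes f\}$. Since $\rho$ is multiplicative,
\[ \rho\bigl(X_1^{m_1}\cdots X_{n-1}^{m_{n-1}}\bigr)=x^{m_1+2m_2+\dots+(n-1)m_{n-1}}\otimes X_1^{m_1}\cdots X_{n-1}^{m_{n-1}}. \]
Write $f=\sum_m\lambda_m X^m$. The elements $X^m$ are linearly independent, and the elements $1,x,\dots,x^{n-1}$ form a basis of $k[\mu_n]$, with $x^a$ depending only on $a$ modulo $n$. Hence $\rho(f)=1\otimes f$ holds if and only if every monomial with $\lambda_m\ne0$ has $\sum_i i m_i\equiv 0\pmod n$. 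So the invariant ring is exactly the span of monomials with weight divisible by $n$. This needs no hypothesis on the characteristic: even when $p\mid n$ and $\mu_n$ is non-reduced, we work with the coaction directly rather than with $k$-points. By Theorem~\ref{th:R}, this span is $R$.

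\textbf{Step 3: the spectrum.} For a finite group scheme acting on an affine scheme $\Spec S$, the quotient is by definition $\Spec(S^{G})$. Here that gives $\Spec(R)=\bA^{n-1}/\mu_n$. If a geometric statement is wanted, I will note two facts:
\begin{itemize}
\item $R\subseteq S$ is integral, since $X_i^n\in R$.
\item The fibres of $\Spec S\to\Spec R$ are $\mu_n$-orbits, by standard results on quotients by finite group schemes.
\end{itemize}

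\textbf{Main obstacle.} Only Step 2 uses a real argument: we must work in $k[\mu_n]$ with its basis $1,x,\dots,x^{n-1}$, and avoid evaluating at roots of unity, which may not exist in $k$ or may coincide.
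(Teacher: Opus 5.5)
Your proof is correct and follows the same route as the paper. The invariants are computed monomial by monomial from $\rho(X^m)=x^{\sum_i i m_i}\otimes X^m$ and matched with Theorem~\ref{th:R}, and $\Spec(R)=\bA^{n-1}/\mu_n$ is taken as the definition of the quotient; your check of the coaction axioms and your use of the basis $1,x,\dots,x^{n-1}$ of $k[\mu_n]$ to show that the invariants are spanned by invariant monomials make explicit what the paper leaves implicit.
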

\begin{proof}
The invariants are by definition the polynomials $f$ with
$\rho(f)=1\otimes f$. A monomial satisfies this if and only if $f$ is
of the form described in Theorem~\ref{th:R}, and these span the invariants.
\end{proof}

\begin{remark}\label{rk:zeta}
If $k$ contains $n$ distinct $n$th roots of unity $1,\zeta,
\zeta^2,\dots,\zeta^{n-1}$, then $\mu_n$ is
isomorphic to the constant cyclic group scheme $\bZ/n$ after choosing
a primitive root. The action described in 
Theorem~\ref{th:mu_n} is given by the diagonal matrix
\[ \begin{pmatrix} \zeta & & &  0 \\
&\zeta^2 & & \\
&&\ddots & \\
 0& & & \zeta^{n-1}
\end{pmatrix}. \]
and so $\Spec(R)\cong \bA^{n-1}(k)/(\bZ/n)$.
\end{remark}

\begin{corollary}\label{co:oeis}
Let $\mfp$ be a minimal prime of $C$. Then 
the number of minimal generators for $C/\mfp$ is given by sequence 
\href{http://oeis.org/A096337}{\rm A096337} in
the {\rm OEIS}, which begins as follows:
\begin{center} 
\hfill\rm 0,\ 1,\ 3,\ 6,\ 14,\ 19,\ 47,\ 64,\ 118,\ 165,\ 347,\ 366,\ 826,\
  973,\ 1493,\ 2134,\ \dots \hfill \qed
\end{center}
\end{corollary}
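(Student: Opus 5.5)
To prove Corollary~\ref{co:oeis}, I would first reduce to $R$. By Proposition~\ref{pr:minimal-primes} the group $\sym{n}$ permutes the minimal primes of $C$ transitively. Each automorphism of $C$ induces an isomorphism $C/\mfp\cong C/\sigma(\mfp)$, so it suffices to treat $\mfp=\mfp_0$. By Theorem~\ref{th:Zvhat}, $C/\mfp_0\cong R$, and by Theorem~\ref{th:R}, $R$ is the semigroup ring $k[S]$ of the affine semigroup
\[ S=\{(m_1,\dots,m_{n-1})\in\bZ_{\ge0}^{n-1}\mid m_1+2m_2+\dots+(n-1)m_{n-1}\equiv 0 \pmod n\}. \]

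The second step identifies the minimal number of algebra generators of $k[S]$ with the size of the Hilbert basis of $S$. Grade $R$ by total degree $\sum m_i$, which is positive on $S\setminus\{0\}$; the irrelevant ideal is then $\mfm=(X^m\mid m\ne0)$. By graded Nakayama, the minimal number of homogeneous generators equals $\dim_k\mfm/\mfm^2$. Because $R$ is a monomial algebra, $\mfm/\mfm^2$ has as basis the monomials $X^m$ with $m\in S\setminus\{0\}$ that cannot be written as $m'+m''$ with $m',m''\in S\setminus\{0\}$, that is, the irreducible elements of $S$. I would also remark that an arbitrary (not necessarily homogeneous) minimal generating set has the same size, by passing to $\mfm/\mfm^2$ at the maximal ideal $\mfm$.

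The third step matches this count with A096337. That sequence is the number of irreducible (minimal zero-sum) solutions, namely the Hilbert basis of the monoid of nonnegative solutions of $\sum_{i=1}^{n-1} i\,m_i\equiv0 \pmod n$. Equivalently, it counts the minimal zero-sum sequences over $\bZ/n$ that avoid the element $0$. Including $0$ would add only the single atom $(0)$. Since the paper's list begins with $0$ at $n=1$, I would check the indexing against the small cases:
\begin{itemize}
\item $n=2$: only $X_1^2$, giving $1$;
\item $n=3$: $X_1^3$, $X_2^3$, $X_1X_2$, giving $3$;
\item $n=4$: $X_1^4$, $X_2^2$, $X_3^4$, $X_1X_3$, $X_1^2X_2$, $X_2X_3^2$, giving $6$.
\end{itemize}
This confirms that the sequence entries match the atom counts.

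The main obstacle is this last identification, in particular making sure the OEIS definition matches exactly, including the exclusion of the residue $0$ and the offset. The algebraic reduction is routine. I would state the correspondence explicitly: the atom $m$ corresponds to the minimal zero-sum multiset containing $m_i$ copies of $i\in\bZ/n$. I would then cite the OEIS entry rather than attempt a closed formula, since none is known.
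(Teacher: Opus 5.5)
Your proposal is correct and takes essentially the same route as the paper. The paper simply invokes Theorem~\ref{th:R} to identify $C/\mfp$ with the monomial algebra of nonnegative solutions of $\sum_i i\,m_i\equiv 0 \pmod n$, so that the minimal generators are the indecomposable solutions counted by A096337; your reduction to $\mfp_0$ via transitivity, the graded Nakayama argument identifying generators with irreducible elements of the semigroup, and the checks for $n=2,3,4$ fill in details the paper leaves implicit.
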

\begin{proof}
Theorem~\ref{th:R} shows that the number of minimal generators is the number
of indecomposable positive solutions to the integer equation 
$x_1+2x_2+\dots+(n-1)x_{n-1}\equiv 0 \pmod n$, which is the
description for this entry in OEIS.
\end{proof}

\begin{remark}
A (weak) lower bound for this number is
given in Dixmier, Erd\H{o}s and
Nicolas~\cite{Dixmier/Erdos/Nicolas:1987a}, where it is denoted $F(n)$.  See also Harris and 
Wehlau~\cite{Harris/Wehlau:2006a}. 
\end{remark}

\begin{proposition}
The ring $R$ is Cohen--Macaulay. It is Gorenstein if and
only if $n$ is odd or $n=2$.
\end{proposition}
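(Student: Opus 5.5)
By Theorem~\ref{th:R}, $R$ is the semigroup ring $k[S]$ of the affine semigroup
\[ S=\{m\in\bZ_{\ge0}^{n-1}\mid m_1+2m_2+\dots+(n-1)m_{n-1}\equiv 0\pmod n\}, \]
which is the set of lattice points of the cone $\bZ_{\ge0}^{n-1}$ in the sublattice $L=\{m\in\bZ^{n-1}\mid \sum_i im_i\equiv0\pmod n\}$. The plan is to treat $R$ as a normal toric ring and read off both properties combinatorially.

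\textbf{Cohen--Macaulayness.} First, $S=L\cap\mathbb{R}_{\ge0}^{n-1}$ is saturated in the group $L$ it generates; indeed $L$ is generated by $S$, since $L\supseteq n\bZ^{n-1}$ and $S$ contains $ne_i$ together with elements congruent to any class. So $k[S]$ is a normal affine semigroup ring. Hochster's theorem then makes $R$ Cohen--Macaulay in every characteristic. This avoids any appeal to Hochster--Eagon for invariants of a linearly reductive group, which would fail when $p\mid n$; here $\mu_n$ is still linearly reductive as a group scheme, but the toric route is cleaner.

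As an alternative, $R$ is a direct summand of $P=k[X_1,\dots,X_{n-1}]$ as an $R$-module: the monomial projection onto exponents in $S$ is $R$-linear. Also $P$ is finite over the Noether normalisation of Corollary~\ref{co:Noether-norm}, so $R$ is a direct summand of a free module over $k[X_1^n,\dots,X_{n-1}^n]$. A direct summand of a Cohen--Macaulay module over that polynomial ring is again free, so $R$ is Cohen--Macaulay.

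\textbf{Gorenstein criterion.} For a normal semigroup ring, the canonical module $\omega_R$ is spanned by the monomials in the relative interior, namely $\mathrm{int}(S)=\{m\in S\mid m_i\ge1\ \forall i\}$ (Danilov--Stanley). So $R$ is Gorenstein iff $\mathrm{int}(S)=c+S$ for some $c$. Since the cone is the orthant, this forces $c$ to be the minimum, which must be $(1,\dots,1)$ if that lies in $S$. Thus $R$ is Gorenstein iff $(1,1,\dots,1)\in S$, that is, iff
\[ 1+2+\dots+(n-1)=\tfrac{n(n-1)}{2}\equiv0\pmod n, \]
which holds exactly when $n$ is odd. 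The case $n=2$ must be handled separately because there the unique generator gives $R=k[X_1^2]$, a polynomial ring. In that case the group does not act in codimension $\ge2$, and the face $\{m_1=0\}$ yields the exceptional behaviour.

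The step I expect to be the main obstacle is making the generic criterion rigorous when $n\ge3$. One must check that the facets of the cone in the lattice $L$ are exactly the coordinate hyperplanes, with primitive inner normals equal to the coordinate functions. This requires that for each $i$ some element of $L$ has $m_i=1$, for example $e_i+e_j$ with $i+j\equiv 0$, or $e_i+(n-i)e_1$. Granting this, the interior is exactly $\{m_i\ge1\}$, and the argument $\mathrm{int}(S)=c+S\Leftrightarrow c=(1,\dots,1)\in S$ is elementary: if $(1,\dots,1)\notin S$, then elements of $\mathrm{int}(S)$ with minimal coordinate sum are not unique up to $S$-translation. I would exhibit two of them explicitly, such as $(1,\dots,1)+e_j$ for suitable $j$ fixing the residue, to show that $\omega_R$ needs at least two generators.
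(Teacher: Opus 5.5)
Your proof is correct, but it takes a different route from the paper. The paper gets Cohen--Macaulayness from the Hochster--Roberts theorem: $\mu_n$ is diagonalisable, hence linearly reductive, and it acts on a polynomial ring. For the Gorenstein property the paper proceeds in three steps:
\begin{enumerate}
\item it first works over a field containing a primitive $n$th root of unity $\zeta$, where $R$ is the invariant ring of the cyclic group generated by $\mathrm{diag}(\zeta,\zeta^2,\dots,\zeta^{n-1})$;
\item it applies Watanabe's theorem, using that there are no pseudoreflections when $n\ge3$ and that the determinant is $(-1)^{n-1}$;
\item it transfers the conclusion to arbitrary $k$, including characteristics dividing $n$, by Stanley's theorem that for a Cohen--Macaulay graded domain the Gorenstein property is determined by the Hilbert series, which does not depend on $k$.
\end{enumerate}

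Your toric argument is uniform in the characteristic. For Cohen--Macaulayness you use Hochster's theorem for the normal semigroup $S=L\cap\mathbb{R}_{\ge0}^{n-1}$, or, more elementarily, the fact that $R$ is free over $k[X_1^n,\dots,X_{n-1}^n]$ on the monomials $X^a$ with $0\le a_i<n$ and $\sum_i ia_i\equiv 0$. You then apply the Danilov--Stanley description of $\omega_R$. This needs neither roots of unity nor the Hilbert-series transfer. It also identifies the canonical module explicitly: it is spanned by the monomials with all exponents positive, and it is generated by $X_1X_2\cdots X_{n-1}$ when $n$ is odd. What the paper's route buys in return is a conceptual reading of the parity condition as the requirement that the cyclic group lie in $SL_{n-1}$, in line with the description $R=k[X_1,\dots,X_{n-1}]^{\mu_n}$.

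One remark in your last paragraph is wrong and should be dropped. For $n$ even, $(1,\dots,1)$ has residue $n/2$, so $(1,\dots,1)+e_j\in S$ only for $j=n/2$. In fact $(1,\dots,1)+e_{n/2}$ is the \emph{unique} element of $\mathrm{int}(S)$ of minimal coordinate sum, so the two witnesses you propose do not exist.

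You do not need them. If $\mathrm{int}(S)=c+S$, then $c\le m$ coordinatewise for all $m\in\mathrm{int}(S)$. For $n\ge3$ each coordinate takes the value $1$ on $\mathrm{int}(S)$: take your element of $L$ with $m_i=1$ and add large multiples of $ne_j$ for $j\ne i$. Hence $c=(1,\dots,1)$, and this must lie in $S$. If you want two incomparable generators of $\omega_R$ for even $n\ge4$, use $(1,\dots,1)+e_{n/2}$ and $(1,\dots,1)+e_1+e_{n/2-1}$.

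Also, the equality $\mathrm{int}(S)=\{m\in L\mid m_i\ge 1\ \forall i\}$ holds without any primitivity check. Primitivity of $e_i^*$ on $L$ is only what guarantees that the value $1$ is attained. Its failure for $n=2$ is exactly why the generator there is $c=2$.
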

\begin{proof}
In this proof, we make use of Theorems~\ref{th:R} and~\ref{th:Zvhat} to
identify $R$ with the integral domain $C/\mathfrak{p}$ and the
subring stated in Theorem~\ref{th:R}. By Theorem \ref{th:mu_n}, since $\mu_n$ is diagonalisable and $R$ is the invariant ring of a regular ring by $\mu_n$, $R$ is Cohen--Macaulay by the Hochster--Roberts theorem \cite{Hochster/Roberts:1974a}.
The Poincar\'e series is described by the non-negative solutions of
\[ x_1+2x_2+\dots+(n-1)x_{n-1}\equiv 0 \pmod{n} \] 
with the $x_i$ having
weight two. It follows that the Poincar\'e series is 
independent of the field of coefficients. So first assume
that $k$ has $n$ distinct $n$th roots of unity. Then we are looking at
the invariant ring of $\bZ/n$ on $k[X_1,\dots,X_{n-1}]$ with
eigenvalues $\zeta,\zeta^2,\dots,\zeta^{n-1}$, as in
Remark~\ref{rk:zeta}. The determinant of this action is 
$\prod_{i=1}^{n-1}\zeta^i=(-1)^{n-1}$, so
the action is contained in $SL(n-1,k)$ if and only if $n$ is odd. By
the theorem of Watanabe~\cite{Watanabe:1974a,Watanabe:1974b}, as long
as $\bZ/n$ contains no pseudoreflections on the $n-1$ dimensional
representation,  the
invariants are Gorenstein if and only if the action is contained in
$SL(n-1,k)$. For $n\ge 3$ there are no pseudoreflections, and for
$n=2$ the invariants $k[X_1^2]$ are Gorenstein. It follows that $R$ is
Gorenstein if and only if $n$ is odd or $n=2$. Over an arbitrary field, we now argue using Theorem~4.4
of Stanley~\cite{Stanley:1978a} that whether or not a graded integral domain is
Gorenstein can be read off from its Poincar\'e series, and we just
observed that this does not depend on the field.
\end{proof}

\begin{example}
If $n=3$ then the Poincar\'e series for $R$ is
\[ \frac{1+t^4+t^8}{(1-t^6)^2}. \]
The numerator is symmetric, so the algebra is Gorenstein. 

On the other
hand, if $n=4$ then the Poincar\'e series for $R$ is
\[ \frac{1-t^2+t^4+t^6}{(1-t^8)(1-t^4)(1-t^2)}. \]
The numerator is not symmetric, so the algebra is not Gorenstein.
\end{example}

We claim that $R$ is a toric ring, meaning that it is a prime ring
whose variety $\Spec(R)$ is an affine toric variety. This will allow
us to deduce some properties of $R$ using the theory of toric
rings. Our notation will conform with Cox, Little
and Schenck~\cite{Cox/Little/Schenck:2011a}, to which we refer for
background. By definition, an 
affine toric variety $V$ is an irreducible affine variety that
contains a torus $T=(k^\times)^m$ as a dense open subset, such that
the action of $T$ on itself extends to an action on $V$.

Let $R'$ be the subring of the Laurent polynomials
$k[X_1,X_1^{-1},\dots,X_{n-1},X_{n-1}^{-1}]$ spanned by the monomials
$X_1^{m_1}\dots X_{n-1}^{m_{n-1}}$ such that $\sum_{i=1}^{n-1}im_i$ is
divisible by $n$. This is a Laurent polynomial ring generated by the
elements 
\begin{equation}
X_1^2X_2^{-1},\quad X_1^3X_3^{-1},\quad \dots,\quad
X_1^{n-1}X_{n-1}^{-1},\quad X_1^n 
\end{equation}
and their inverses. The ring
$k[X_1,X_1^{-1},\dots,X_{n-1},X_{n-1}^{-1}]$ is a free module of rank
$n$ over this subring, with basis $1$, $X_1$, \dots, $X_{n-1}$.

The inclusion $R\to R'$ gives an injective map 
\[ T=\Spec(R') \to \Spec(R) \] 
as a dense
open subset. The action of $T$ on itself extends to an action
on $\Spec(R)$.
There is a one to one correspondence between
characters of $T$ and monomials in $R'$.
Let $\mcA=\mcA_n=\{\mu_1,\dots,\mu_s\}$ be the set of monomials minimally
generating $R$, where $s$ is described in Corollary~\ref{co:oeis}. 
These correspond to characters
$\bX^{\mu_1},\dots,\bX^{\mu_s}$ of $T$.
Then consider the map 
\[ \Phi_\mcA\colon T\to k^s \]
defined by 
\[ \Phi_\mcA(t)=(\bX^{\mu_1}(t),\dots,\bX^{\mu_s}(t)). \]
The Zariski closure $Y_\mcA$ of the image of this map is isomorphic to
$\Spec(R)$. The map $\Phi_\mcA$ induces a map of character lattices 
\[ \hat\Phi_\mcA\colon \bZ^s\to \bZ\mcA\cong\bZ^{n-1}. \]
By~\cite[Proposition~1.1.9]{Cox/Little/Schenck:2011a}, the \emph{toric ideal} of
relations between the $s$ generators $R$ is generated by binomials,
which are a difference of a pair of monomials corresponding to
positive elements of $\bZ^s$ whose differences go to zero in
$\bZ\mcA$. This is a prime ideal.

\begin{theorem}
The integral domain $R$ is normal (i.e., integrally closed in its
field of fractions).
For $n\ge 3$, the divisor class group $\Cl(R)$ (Weil divisors modulo principal
divisors) is isomorphic to $\bZ/n$. The Picard group (Cartier divisors
modulo principal divisors) is $\Pic(R)=0$, every projective $R$-module is free, and $K_0(R)\cong\bZ$.
\end{theorem}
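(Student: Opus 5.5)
We work throughout with $R$ as the affine semigroup ring $k[S]$ given by Theorem~\ref{th:R}, where
\[ S=\{m\in\bZ_{\ge0}^{n-1}\mid \textstyle\sum_i im_i\equiv 0\pmod n\}, \]
and with the lattice $M=\bZ\mcA=\{m\in\bZ^{n-1}\mid \sum_i im_i\equiv 0\pmod n\}$. The lattice $M$ has index $n$ in $\bZ^{n-1}$, since $\bZ^{n-1}/M\cong\bZ/n$ via $m\mapsto\sum_i im_i$; this map is surjective because $e_1\mapsto1$.

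\textbf{Normality.} By definition $S=M\cap\bZ_{\ge0}^{n-1}=M\cap\sigma^\vee$, where $\sigma^\vee$ is the positive orthant cone. A saturated affine semigroup of this form gives a normal semigroup ring (Cox--Little--Schenck, Theorem~1.3.5). Alternatively, $R=k[X]^{\mu_n}$ is the ring of invariants of a normal domain: if $f\in\operatorname{Frac}(R)$ is integral over $R$, then it is integral over the normal ring $k[X_1,\dots,X_{n-1}]$, so it lies there. Since it is also $\mu_n$-invariant (invariance being checked monomially), it lies in $R$.

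\textbf{Class group.} Take $N=\Hom(M,\bZ)$ and let $\sigma\subset N_{\mathbb R}$ be the cone whose dual is the orthant. Its rays are the primitive vectors of $N$ on the coordinate rays $e_i^*\mathbb{R}_{\ge 0}$. For $n\ge3$ the cone is $(n-1)$-dimensional with $n-1$ rays, so by Cox--Little--Schenck, Theorem~4.1.3, there is an exact sequence
\[ 0\to M\to\bZ^{n-1}\to\Cl(R)\to0, \]
where the map sends $m\mapsto(\langle m,u_i\rangle)_i$ and $u_i$ is the primitive generator of the $i$th ray. The main computation is to determine the $u_i$. Since $M\supseteq n\bZ^{n-1}$, the primitive generator is $u_i=e_i^*/d_i$, where $d_i$ is the largest integer for which $e_i^*/d_i$ takes integer values on $M$. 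Equivalently, $d_i=\gcd\{m_i: m\in M\}$.

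Now $M$ contains $ne_i$, and it also contains the vector $e_i-ie_1$. Hence $m_i$ can equal $1$ for $i\ge2$. For $i=1$, the vector $e_1+(n-1)e_{n-1}$... wait, its weight is $1+(n-1)^2$, which is $\equiv 2\pmod n$; instead use $e_1+e_{n-1}$, whose weight is $n$. So $d_i=1$ for all $i$, and thus $u_i=e_i^*$. (This step needs $n\ge3$, so that $e_1$ and $e_{n-1}$ are distinct coordinates; for $n=2$ the ring is $k[X_1^2]$, which is regular.) The map $M\to\bZ^{n-1}$ is therefore just the inclusion, and $\Cl(R)\cong\bZ^{n-1}/M\cong\bZ/n$.

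\textbf{Picard group, projectives and $K_0$.} The ring $R$ is positively graded with $R_0=k$, since the grading by length is positive on nonidentity monomials. A finitely generated projective module over such a ring is free: tensor down to $R/R_+=k$ and apply the graded Nakayama lemma, using the fact that projective modules over positively graded rings are extended from $R_0$ (Swan's result for graded rings). The same conclusion can also be reached through Gubeladze's theorem for normal affine semigroup rings. Hence every projective $R$-module is free, $\Pic(R)=0$, and $K_0(R)\cong\bZ$ via rank.

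The main obstacle I expect is the careful identification of the rays of the dual cone in the lattice $N$, rather than in $\bZ^{n-1}$. This is where the hypothesis $n\ge3$ enters, and where an index error would change the answer for $\Cl(R)$.
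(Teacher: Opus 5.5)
Your normality argument and your computation of $\Cl(R)$ follow the paper's route: saturation of $S=M\cap\bZ_{\ge0}^{n-1}$ with Cox--Little--Schenck Theorem~1.3.5, then Theorem~4.1.3. Your explicit check that, for $n\ge3$, the primitive ray generators in $N=\Hom(M,\bZ)$ are the $e_i^*$ themselves (via $e_i-ie_1$ and $e_1+e_{n-1}$ in $M$) is correct. It makes $M\to\bZ^{n-1}$ the inclusion with cokernel $\bZ/n$, which supplies the detail the paper leaves implicit. Your alternative normality argument via the $\bZ/n$-grading is also fine.

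The gap is in your last paragraph. It is false that finitely generated projective modules over a positively graded ring with $R_0=k$ are free. For example, $k[t^2,t^3]$ is such a ring, but the conductor square with normalisation $k[t]$ and conductor $t^2k[t]$ gives $\Pic(k[t^2,t^3])\cong (k[t]/(t^2))^\times/k^\times\cong(k,+)\ne0$. Swan's result and the graded Nakayama lemma apply only to \emph{graded} projective modules. The real difficulty is showing that an arbitrary projective module is extended from $R_0$, and that is exactly the content of Gubeladze's theorem (Anderson's conjecture). That theorem genuinely needs the monoid to be seminormal, and the counterexample above, $k[S']$ with $S'=\{0,2,3,4,\dots\}$, fails precisely there.

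So your proof goes through only via the fallback you mention in passing. $S$ is a normal affine monoid, which you proved, so by Bruns--Gubeladze, Theorem~8.4, every finitely generated projective $k[S]$-module is free. This is exactly the paper's argument, and it should be the proof rather than an aside. Then $\Pic(R)=0$ and $K_0(R)\cong\bZ$ follow; the paper also obtains $\Pic(R)=0$ directly from Cox--Little--Schenck, Proposition~4.2.2.
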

\begin{proof}
If a monomial is in $R'$ and some power of it is in $R$, then the
monomial is in $R$. This is the saturation condition needed to
apply~\cite[Theorem~1.3.5]{Cox/Little/Schenck:2011a}, and conclude
that $\Spec(R)$ is normal.
Applying~\cite[Theorem~4.1.3]{Cox/Little/Schenck:2011a} to $R$, 
we have $\Cl(R)\cong\bZ/n$ for $n\ge 3$ (for $n=2$ we have
$\Cl(R)=0$). Applying~\cite[Proposition~4.2.2]{Cox/Little/Schenck:2011a},
we have $\Pic(R)=0$. Every projective
$R$-module is free by Bruns and
Gubeladze~\cite[Theorem~8.4]{Bruns/Gubeladze:2009a}, and so $K_0(R)\cong\bZ$.
In fact, this even extends from field coefficients to PID
coefficients.
\end{proof}

\section{\texorpdfstring{$\mcN(\tilde A_{n-1})$-modules}{𝓝(Ãₙ₋₁)-modules}}%
\label{se:Amodules}

Recall that $A$ is an affine nilCoxeter algebra of type $\tilde
A_{n-1}$, $C$ is the large commutative subalgebra of
Section~\ref{se:C}, admitting an action of
$\sym{n}$,  and the centre of $A$ is $R=C^{\sym{n}}$.

Even if we didn't already know from Theorem~\ref{th:fd}
that the simple $A$-modules are finite dimensional, we have the
following alternative proof. 

\begin{lemma}[Zariski~\cite{Zariski:1947a}]\label{le:Zariski}
If $F$ is a finitely generated commutative $k$-algebra and $F$ is a
field then $F$ is a finite algebraic extension of $k$.
\end{lemma}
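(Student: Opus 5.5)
This is Zariski's lemma, the weak Nullstellensatz. I will prove it by induction on the number $m$ of algebra generators, using the Artin--Tate lemma already cited in the proof of Theorem~\ref{th:Cfg/Csigma}.

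Write $F=k[a_1,\dots,a_m]$ and induct on $m$. The case $m=0$ is trivial.

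For the inductive step, set $K=k(a_1)\subseteq F$. Then $F$ is a field generated as a $K$-algebra by $a_2,\dots,a_m$. Applying the inductive hypothesis over the base field $K$, we get that $F$ is a finite extension of $K$. It therefore remains to show that $a_1$ is algebraic over $k$: then $K=k[a_1]$ is finite over $k$, and finiteness is transitive.

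Suppose, for contradiction, that $a_1$ is transcendental, so that $K\cong k(t)$. We now have $k\subseteq K\subseteq F$ with $F$ finitely generated as a $k$-algebra and finite as a $K$-module. By the Artin--Tate lemma~\cite{Artin/Tate:1951a}, $K$ is then a finitely generated $k$-algebra, say $K=k[f_1/g,\dots,f_r/g]$ after clearing to a common denominator $g\in k[t]$. Every element of $K$ would then have a denominator dividing a power of $g$. But $k[t]$ has infinitely many monic irreducibles (Euclid's argument applied to $g\cdot(\text{product of finitely many})+1$), so we can pick an irreducible $p$ not dividing $g$. Then $1/p\in K$ cannot be written with denominator a power of $g$, which is a contradiction.

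The only substantive step is the Artin--Tate reduction together with the observation that $k(t)$ is not finitely generated as a $k$-algebra. The rest is formal bookkeeping in the induction.

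Alternatively, one can simply cite~\cite{Zariski:1947a}, or Matsumura's treatment of the Nullstellensatz~\cite{Matsumura:1986a}, as the paper evidently intends.
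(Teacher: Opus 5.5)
Your argument is correct and is essentially the Artin--Tate proof of Zariski's lemma in Atiyah--Macdonald (Proposition~7.9), which is the reference the paper's one-line proof points to. The only difference is cosmetic: you induct on the number of generators to reduce to a single transcendental generator $a_1$, so you only need infinitely many irreducibles in $k[t]$, whereas Atiyah--Macdonald choose a transcendence basis among the generators and use irreducibles in the multivariable polynomial ring.
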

\begin{proof}
A proof can be found in Atiyah and
Macdonald~\cite{Atiyah/Macdonald:1969a}.
\end{proof}

\begin{lemma}
If $S$ is a simple module over a finitely generated commutative
$k$-algebra then $S$ is finite dimensional over $k$. If $k$ is
algebraically closed then $S$ is one dimensional over $k$.
\end{lemma}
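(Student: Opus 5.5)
The natural route is to turn the simple module into a field and then apply Lemma~\ref{le:Zariski}. Let $F$ be a finitely generated commutative $k$-algebra and $S$ a simple $F$-module. First I would observe that $S$ is cyclic: choose any nonzero $s\in S$. Then $Fs$ is a nonzero submodule, so $Fs=S$, and hence $S\cong F/\mfm$ where $\mfm=\operatorname{Ann}_F(s)$.

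Next I would show that $\mfm$ is a maximal ideal. Submodules of $F/\mfm$ correspond to ideals of $F$ containing $\mfm$. Since $S$ is simple, the only such ideals are $\mfm$ and $F$, so $\mfm$ is maximal. Because $F$ is commutative, this makes $F/\mfm$ a field. It is also a finitely generated $k$-algebra, being a quotient of $F$.

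Lemma~\ref{le:Zariski} then says $F/\mfm$ is a finite algebraic extension of $k$. In particular $\dim_k S=\dim_k F/\mfm<\infty$. If $k$ is algebraically closed, a finite algebraic extension of $k$ must equal $k$ itself, so $S\cong k$ is one dimensional.

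The only substantive input is Zariski's lemma, which we are allowed to assume. The one point needing care is the identification of $S$ with $F/\mfm$ for a maximal ideal $\mfm$, and this uses commutativity in an essential way. The annihilator of a single element need not be two-sided in general, and the quotient need not be a field. Here commutativity makes the annihilator an ideal, and simplicity makes it maximal, so no real obstacle remains.
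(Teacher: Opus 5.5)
Your proof is correct and matches the paper's argument. In both, a nonzero element gives a surjection from the algebra onto $S$ whose kernel is a maximal ideal $\mfm$, so $S$ is the field obtained by dividing by $\mfm$, and Lemma~\ref{le:Zariski} then gives finite dimensionality, and one-dimensionality when $k$ is algebraically closed.
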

\begin{proof}
Zariski's Lemma~\ref{le:Zariski} implies that if $\mcA$ is a finitely generated
commutative $k$-algebra and $\mfm$ is a maximal ideal then $\mcA/\mfm$ is
a finite algebraic field extension of $k$. In particular, if $k$ is
algebraically closed then $\mcA/\mfm$ is one dimensional over $k$.
If $S$ is a simple $\mcA$-module then choosing a non-zero element of $S$
gives a module homomorphism $\mcA\to S$ whose kernel is a maximal ideal $\mfm$,
so $S$ is isomorphic to $\mcA/\mfm$.
\end{proof}

\begin{lemma}\label{le:C/m}
Let $M$ be a non-zero finitely generated $C$-module. Then there is a surjective 
homomorphism from $M$ to $C/\mfm$ for some maximal ideal $\mfm$ of $C$. 
\end{lemma}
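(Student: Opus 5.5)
Because $C$ is Noetherian (Lemma~\ref{le:Cfg/k}) and $M$ is finitely generated, a Nakayama-style argument is not needed in full; I will use the standard fact that a non-zero finitely generated module over a commutative ring has a maximal proper submodule, and then identify the resulting simple quotient.

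First, since $M\ne 0$ is finitely generated, say by $m_1,\dots,m_t$ with $t$ minimal, the set of proper submodules of $M$ is non-empty (it contains $0$). I will apply Zorn's lemma to it. A chain of proper submodules has proper union: if the union were $M$, all the $m_i$ would lie in a single member of the chain, forcing that member to equal $M$. So there is a maximal proper submodule $M'\subset M$. Alternatively, Noetherianity of $C$ makes $M$ a Noetherian module, so such an $M'$ exists without Zorn.

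Then $S=M/M'$ is a simple $C$-module. Choosing a non-zero $s\in S$ gives a homomorphism $C\to S$, $c\mapsto cs$. It is surjective because $Cs$ is a non-zero submodule of the simple module $S$. Its kernel $\mfm$ is a proper ideal with $C/\mfm\cong S$ simple, so $\mfm$ is maximal: any ideal strictly between $\mfm$ and $C$ would give a proper non-zero submodule of $S$. This is exactly the argument in the preceding lemma. Composing $M\to M/M'\cong C/\mfm$ gives the required surjection.

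I do not expect a real obstacle. The only point needing care is the existence of a maximal proper submodule, which is where finite generation of $M$ is used; without it, the module $\mathbb{Q}$ over $\bZ$ shows the statement can fail.
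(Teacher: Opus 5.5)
Your proposal is correct and is essentially the paper's proof. The paper likewise uses Noetherianity of $C$ (Lemma~\ref{le:Cfg/k}) to get a maximal proper submodule $M'$, then identifies the simple quotient $M/M'$ with $C/\mfm$ by choosing a generator; your Zorn's lemma variant additionally shows that finite generation of $M$ alone suffices.
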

\begin{proof}
Since $C$ is Noetherian by Lemma~\ref{le:Cfg/k}, $M$ is Noetherian and has a
maximal proper submodule $M'$. Then $M/M'$ 
is simple, so choosing a generator, it is isomorphic to $C/\mfm$ for
some maximal ideal $\mfm$. Thus we have a surjective homomorphism $M
\to M/M' \cong C/\mfm$.
\end{proof}

\begin{theorem}
Let $S$ be a simple $A$-module. Then $S$ has finite dimension over
$k$. 
\end{theorem}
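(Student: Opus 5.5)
The plan is to use the lemmas just established: Lemma~\ref{le:C/m}, Zariski's Lemma~\ref{le:Zariski}, and Theorem~\ref{th:fg}. Let $S$ be a simple $A$-module and choose a non-zero $s\in S$. Then $S=As$. Since $A$ is finitely generated as a left $C$-module by Theorem~\ref{th:fg}, say by $a_1,\dots,a_m$, the elements $a_1s,\dots,a_ms$ generate $S$ as a $C$-module. So $S$ is a non-zero finitely generated $C$-module.

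By Lemma~\ref{le:C/m} there is a surjection of $C$-modules $S\to C/\mfm$ for some maximal ideal $\mfm$ of $C$. Let $S'$ be its kernel, a proper $C$-submodule with $S/S'\cong C/\mfm$. By Lemma~\ref{le:Zariski}, $C/\mfm$ is a finite extension field of $k$, since $C$ is finitely generated by Lemma~\ref{le:Cfg/k}. So $S/S'$ is finite dimensional. This alone does not bound $\dim_k S$; we need to pass from $C$ back to $A$.

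To do so we use the centre $R=C^{\sym{n}}$. Put $\mfm_0=\mfm\cap R$, a maximal ideal of $R$, since $C$ is integral over $R$ by Theorem~\ref{th:Cfg/Csigma}. The central element set $\mfm_0 S$ is an $A$-submodule of $S$, because $R$ is central by Theorem~\ref{th:centre}. It is contained in $\mfm S\subseteq S'\neq S$, so by simplicity $\mfm_0S=0$. Hence $S$ is a module over $A/\mfm_0A$. Now $A$ is finitely generated as an $R$-module (the lemma following Theorem~\ref{th:centre}), so $A/\mfm_0A$ is a finitely generated module over $R/\mfm_0$, which is a finite extension of $k$ by Zariski. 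Thus $A/\mfm_0A$ is finite dimensional over $k$, and $S$, being cyclic over it, is a quotient of $A/\mfm_0A$ and hence finite dimensional.

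The main point needing care is the step $\mfm S\subseteq S'$ and the use of centrality: $\mfm$ itself need not act as zero on $S$, since $C$ is not central. Only its contraction to $R$ acts by an $A$-submodule, and it is simplicity together with $\mfm_0S\subseteq\mfm S\subseteq S'\subsetneq S$ that forces $\mfm_0S=0$. One must also check that $\mfm\cap R$ is maximal in $R$, which follows from integrality of $C$ over $R$ via the standard lying-over/going-up facts. Alternatively, $R/(\mfm\cap R)$ embeds in the field $C/\mfm$ of finite dimension over $k$, so it is a domain finite dimensional over $k$, hence a field.
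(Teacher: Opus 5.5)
Your proof is correct, but it follows a different route from the paper's. The paper never uses the centre here. From the nonzero map $S{\da}_C\to C/\mfm$ given by Lemma~\ref{le:C/m}, it applies the adjunction $\Hom_C(S{\da}_C,C/\mfm)\cong\Hom_A(S,\Hom_C(A,C/\mfm))$. This gives a nonzero $A$-map, injective since $S$ is simple, from $S$ into the coinduced module $\Hom_C(A,C/\mfm)$. That module is finite dimensional because $A$ is a finitely generated left $C$-module (Theorem~\ref{th:fg}) and $C/\mfm$ is finite over $k$. The argument uses only Theorem~\ref{th:fg}, Lemma~\ref{le:Cfg/k} and Zariski, so it works for any algebra that is module-finite over a commutative affine subalgebra, central or not.

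Your route needs more input. It uses the inclusion $R\subseteq Z(A)$, the easy half of Theorem~\ref{th:centre} coming from Lemma~\ref{le:YiZv}. It also uses module-finiteness of $A$ over $R$, which rests on the invariant-theoretic Theorem~\ref{th:Cfg/Csigma}. In exchange it gives more. It shows directly that $\mfm\cap R$ annihilates $S$, so $S$ is a quotient of the finite-dimensional algebra $A/(\mfm\cap R)A$. This produces the central character of Definition~\ref{def:central-character} without first needing finite dimensionality and Schur's lemma.

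Once you use $R$, the detour through $C$ is unnecessary. $S$ is a finitely generated module over the Noetherian ring $R$, so the proof of Lemma~\ref{le:C/m} run over $R$ gives a maximal ideal $\mathfrak{n}$ of $R$ with $\mathfrak{n}S\ne S$. Centrality and simplicity then give $\mathfrak{n}S=0$ at once. As a minor point, the paper reserves $\mfm_0$ for the zero-weight maximal ideal of $C$, so choose another name for $\mfm\cap R$.
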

\begin{proof}
Since $S$ is simple, it is isomorphic to a quotient of $A$. So by
Theorem~\ref{th:fg}, $S{\da}_C$ is finitely generated.
By Lemma~\ref{le:C/m}, there is a non-zero homomorphism from
$S{\da}_C$ to some $C/\mfm$. Then we have
\[ 0 \ne \Hom_C(S{\da}_C,C/\mfm)= \Hom_C({_C}A_A\otimes_A S,C/\mfm)
\cong \Hom_A(S,\Hom_C({_C}A_A,C/\mfm)). \]
Thus $S$ is isomorphic to a submodule of some
$\Hom_C({_C}A_A,C/\mfm)$. By Lemma~\ref{le:Zariski}, $C/\mfm$ is
finite dimensional over $k$. 
By Theorem~\ref{th:fg}, $\Hom_C({_C}A_A,C/\mfm)$ is finite dimensional
over $C/\mfm$ and hence over $k$. Therefore 
 $S$ is finite dimensional over $k$.
\end{proof}

\begin{definition} {\bf (Central character)}\label{def:central-character}
Let $S$ be a simple $A$-module. 
By Schur's lemma, $\End_A(S)$ is a 
division ring, finite dimensional over $k$. By Theorem~\ref{th:centre},
the action of $A$ on $S$ induces a map $\Phi\colon R\to
Z\End_A(S)$ given by $\Phi(r)(s)=rs$.  The image is an integral domain which is a finite
extension of $k$, so it is a field. So the kernel is a maximal ideal $\mfm$
of $R$. We call the map $R\to R/\mfm$
the \emph{central character}  $\chi_S$. The restriction of $S$ to
$R$ is a direct sum of copies of $\chi_S$. Note that the
field $R/\mfm$ is a finitely generated algebra over $k$. By
Lemma~\ref{le:Zariski} $R/\mfm$ is therefore a finite dimensional field
extension of $k$, and if $k$
is algebraically closed then $R/\mfm\cong k$.
\end{definition}

\begin{definition}{\bf (Weight spaces)}
If $M$ is an $A$-module, and $\mfm$ is a maximal
ideal in $C$, let $\tilde k=C/\mfm$, a finite extension of $k$, 
and let $\chi\colon C \to \tilde k$ be the corresponding map and $\mfm_\chi=\mfm$.
Then we write $M_\chi$ for the \emph{weight space} of $M$
corresponding to $\chi$, namely
the sum of the copies of $C/\mfm$ in $M{\da_C}$. This is equal to
$\{x\in M\mid \mfm_\chi x = 0\}$. 
Those $\chi$ with $M_\chi\ne 0$ are called the \emph{weights} of
$M$.   
\end{definition}

\begin{remark}
Note that a non-zero finite dimensional $A$-module has at least one
weight, because its restriction to $C$ has a minimal non-zero
submodule. For a simple module, all its weights are maximal ideals
lying above the central character. An infinite dimensional module need
not have any weights.
\end{remark}

\begin{lemma}\label{le:direct-sum}
The sum of the weight spaces of an $A$-module is a direct sum, and
defines a submodule.
\end{lemma}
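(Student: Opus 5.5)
The statement has two parts: the sum of the weight spaces $M_\chi$ is direct, and this sum is an $A$-submodule. For directness we use only the commutative algebra of $C$. For stability under $A$ we use the commutation relations of Lemma~\ref{le:YiZv} together with the $\sym{n}$-action on $C$.

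\emph{Directness.} Let $\chi_1,\dots,\chi_r$ be distinct characters with distinct maximal ideals $\mfm_1,\dots,\mfm_r$ of $C$. We show by induction on $r$ that $M_{\chi_r}\cap\sum_{i<r}M_{\chi_i}=0$.

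1. Since the $\mfm_i$ are distinct maximal ideals, $\mfm_r+\bigcap_{i<r}\mfm_i=C$. So there is $c\in\bigcap_{i<r}\mfm_i$ with $c\equiv 1\pmod{\mfm_r}$.
2. Such a $c$ kills $\sum_{i<r}M_{\chi_i}$, because it lies in every $\mfm_i$ with $i<r$.
3. Such a $c$ acts as the identity on $M_{\chi_r}$, because $c-1\in\mfm_r$.
4. Hence any element of the intersection is both killed by $c$ and fixed by $c$, so it is zero.

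This is the standard Chinese remainder argument.

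\emph{Submodule.} It suffices to show that each generator $Y_i$ sends a weight space into a weight space. Let $x\in M_\chi$ and write $\mfm=\mfm_\chi$.

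1. By Lemma~\ref{le:YiZv}, $Z_vY_i=Y_iZ_{\bar w_i(v)}$, since $\bar w_i$ is an involution.
2. Put $\mfm'=\bar w_i(\mfm)$. For $Z_v\in\mfm'$ we have $\bar w_i(v)$ indexing an element of $\mfm$. This needs $\bar w_i$ to act on $C$ by the coordinate permutation, which is an algebra automorphism.
3. Therefore $Z_vY_ix=Y_iZ_{\bar w_i(v)}x=0$.
4. For a general element $c=\sum\lambda_vZ_v$ of $\mfm'$, linearity gives $cY_i=Y_i\,\bar w_i(c)$, with $\bar w_i(c)\in\mfm$. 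So $cY_ix=0$.
5. Hence $\mfm'Y_ix=0$. The ideal $\mfm'$ is again maximal, with $C/\mfm'\cong C/\mfm$ via $\bar w_i$.
6. So $Y_ix\in M_{\chi\circ\bar w_i}$, which lies in the sum of the weight spaces.

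\emph{Main obstacle.} The delicate point is step 4: applying the relation $Z_vY_i=Y_iZ_{\bar w_i(v)}$ to arbitrary linear combinations rather than basis elements. It is fine because the relation holds for every basis element $Z_v$, including the cases where both sides vanish. If some care is needed about whether $\bar w_i$ acts on $v$ or on $v^{-1}$, check directly from Lemma~\ref{le:YiZv} by swapping $v$ and $\bar w_i(v)$.
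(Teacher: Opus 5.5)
Your proof is correct and essentially the paper's argument. For directness, both you and the paper use an element of $C$ lying in the other maximal ideals but not in $\mfm_\chi$: you normalise it to be $\equiv 1$ modulo $\mfm_\chi$, while the paper uses that it acts on $M_\chi$ by a nonzero scalar. For stability under $A$, the paper's proof of this lemma writes out only the directness part, but your computation with $Z_vY_i=Y_iZ_{\bar w_i(v)}$ from Lemma~\ref{le:YiZv} is exactly the one the paper gives in Lemma~\ref{le:permute-weights}.
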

\begin{proof}
Let $V$ be an $A$-module, with weight spaces $V_\chi$.
We must show that $V_\chi\cap \sum_{\chi'\ne\chi}V_{\chi'}=0$. Suppose
that 
\begin{equation}\label{eq:vchi}
v_\chi=\sum_{\chi'}v_{\chi'}
\end{equation} 
with $v_\chi\in
V_\chi$ and a finite number of non-zero $v_{\chi'} \in V_{\chi'}$. For
each of these $\chi'$, choose an element that
is in $\mfm_{\chi'}$ but not in $\mfm_\chi$. Multiplying these
together, we obtain an element $x\in C$ with $x\not\in\mfm_\chi$ but
for each $\chi'$ we have $x\in\mfm_{\chi'}$. Then each $xv_{\chi'}=0$,
and so multiplying \eqref{eq:vchi} by $x$, we have $xv_\chi=0$ and
hence $v_\chi=0$.
\end{proof}

\begin{lemma}\label{le:chiZvne0}
Given a maximal ideal $\mfm$ of $C$, 
we can choose $v\in\bZ^{n-1}$ such that $Z_v\not\in\mfm$.
If $v=(r_1,\dots,r_n)$ satisfies $r_i\ne r_{i+1}$ then $\bar
w_i(\mfm)\ne\mfm$. Conversely, if $\bar w_i(\mfm)\ne\mfm$ then there exists
such a $v=(r_1,\dots,r_n)$ with $Z_v\not\in\mfm$ and $r_i\ne r_{i+1}$.
\end{lemma}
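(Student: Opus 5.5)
The plan is to treat the three assertions separately, working throughout with the monomial structure of $C$ from Lemma~\ref{le:ZvZv'}. Note that $\bar w_i$ acts on $C$ by permuting coordinates, so $\bar w_i(Z_v)=Z_{\bar w_i(v)}$, and $\bar w_i(\mfm)$ is again a maximal ideal. (For $i=0$ read $r_n\ne r_1$ throughout, as in Lemma~\ref{le:YiZv}.)

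\textbf{Step 1: some $Z_v\notin\mfm$.} Since $\mfm$ is a proper ideal, $1=Z_0\notin\mfm$, so $v=0$ works. It may be more useful to record a stronger fact. Because $C$ is reduced with finitely many minimal primes (Proposition~\ref{pr:minimal-primes}), $\mfm$ contains some minimal prime $\mfp=\sigma(\mfp_0)$. Then $C/\mfm$ is a quotient of $C/\mfp\cong R$, a domain spanned by monomials. If every $Z_v$ with all coordinates distinct and ordered compatibly with $\mfp$ lay in $\mfm$, we would get a contradiction. Indeed, choose such a $v$ strictly increasing in the order of $\mfp$. For any $v'$ non-descending in that order, $v'+Nv$ is again strictly ordered, and $Z_{v'}Z_{Nv}=Z_{v'+Nv}$. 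This shows only that $Z_{v'+Nv}\in\mfm$, which is not yet a contradiction. So Step 1 will simply use $Z_0=1$, and the real content is in the other two parts.

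\textbf{Step 2: $Z_v\notin\mfm$ with $r_i\ne r_{i+1}$ implies $\bar w_i(\mfm)\ne\mfm$.} The key computation is
\[ Z_vZ_{\bar w_i(v)}=0, \]
which holds by Lemma~\ref{le:ZvZv'}, since the coordinates $i,i+1$ appear in opposite orders in $v$ and $\bar w_i(v)$. Suppose $\bar w_i(\mfm)=\mfm$. Since $Z_v\notin\mfm$, we get $Z_{\bar w_i(v)}=\bar w_i(Z_v)\notin\bar w_i(\mfm)=\mfm$. Then the product of two elements outside the prime $\mfm$ is $0\in\mfm$, a contradiction.

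\textbf{Step 3: the converse.} Assume every $v$ with $Z_v\notin\mfm$ has $r_i=r_{i+1}$; we show $\bar w_i(\mfm)=\mfm$. Under this assumption, every monomial $Z_v$ with $r_i\ne r_{i+1}$ lies in $\mfm$. Any $c\in C$ can be written as $c=c_0+c_1$, where $c_0$ is a combination of $Z_v$ with $r_i=r_{i+1}$ and $c_1$ is a combination of $Z_v$ with $r_i\ne r_{i+1}$. The transposition $\bar w_i$ fixes each monomial in $c_0$, so $\bar w_i(c_0)=c_0$. It also permutes the set of monomials with $r_i\ne r_{i+1}$, all of which lie in $\mfm$, so $c_1\in\mfm$ and $\bar w_i(c_1)\in\mfm$. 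Hence
\[ c\in\mfm \iff c_0\in\mfm \iff \bar w_i(c)\in\mfm, \]
which gives $\bar w_i(\mfm)=\mfm$.

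The main subtlety is this decomposition argument in Step 3. It relies on $\mfm$ containing all monomials of one type, which holds because the hypothesis is a statement about monomials. This avoids needing $\mfm$ to be a monomial ideal, which in general it is not.
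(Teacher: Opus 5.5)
Your proof is correct and follows essentially the same route as the paper: Step~2 uses $Z_vZ_{\bar w_i(v)}=0$ together with primeness of $\mfm$, and Step~3 is the paper's observation that $\mfm$ is the span of the monomials with $r_i\ne r_{i+1}$ plus its intersection with the $\bar w_i$-fixed span. The abandoned digression about minimal primes in Step~1 is unnecessary and can be deleted, since $Z_0=1\notin\mfm$ already suffices.
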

\begin{proof}
Since $\mfm$ is a proper subspace of $C$, and the $Z_v$ span $C$,
there exists $v=(r_1,\dots,r_n)\in\bZ^{n-1}$ with $Z_v\not\in\mfm$. If $r_i\ne
r_{i+1}$ then by Lemma~\ref{le:ZvZv'} we have $Z_vZ_{\bar
  w_i(v)}=0$. Since $Z_v\not\in\mfm$, it follows that $Z_{\bar
  w_i(v)}\in\mfm$. But $Z_{\bar w_i(v)}\not\in\bar w_i(\mfm)$, and so
$\bar w_i(\mfm)\ne\mfm$. Conversely, suppose that for all
$v=(r_1,\dots,r_n)\in\bZ^{n-1}$ with $Z_v \not\in\mfm$, we have
$r_i=r_{i+1}$. Then the elements $Z_v$ with $r_i\ne r_{i+1}$ are in
$\mfm$. So $\mfm$ has a $k$-basis consisting of the elements $Z_v$
with $r_i\ne r_{i+1}$, together with some linear combinations of the
$Z_v$ with $r_i=r_{i+1}$. The former basis elements are permuted by $\bar w_i$, while
the latter are fixed by $\bar w_i$, so $\bar w_i(\mfm)=\mfm$.
\end{proof}

\begin{lemma}\label{le:permute-weights}
Let $M$ be a finite dimensional $A$-module, $\mfm$ be a maximal
ideal in $C$ corresponding to $\chi\colon C \to \tilde k$. Then for
$0\le i\le n-1$ we have
$Y_iM_\chi\subseteq M_{\bar w_i(\chi)}$ (cf.\ Definition~\ref{def:wbar}).
\end{lemma}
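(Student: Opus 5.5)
We need to show that if $x\in M_\chi$, i.e.\ $\mfm x=0$, then $Y_ix$ is annihilated by $\bar w_i(\mfm)$. Here $\bar w_i(\chi)=\chi\circ\bar w_i^{-1}$, whose kernel is $\bar w_i(\mfm)$; since $\bar w_i$ is an involution, $\bar w_i^{-1}=\bar w_i$. The plan is to use the commutation rule of Lemma~\ref{le:YiZv}, $Y_iZ_v=Z_{\bar w_i(v)}Y_i$, which extends linearly to
\[ Y_i c=\bar w_i(c)Y_i\qquad\text{for all } c\in C, \]
where $\bar w_i$ acts on $C$ as an algebra automorphism permuting the basis $\{Z_v\}$.

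The argument then has three steps.

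First, take $c'\in\bar w_i(\mfm)$ and write $c'=\bar w_i(c)$ with $c\in\mfm$.

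Second, apply the commutation rule to get
\[ c'Y_ix=\bar w_i(c)Y_ix=Y_icx. \]

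Third, since $c\in\mfm=\mfm_\chi$ and $x\in M_\chi$, we have $cx=0$, so $c'Y_ix=0$. Hence $Y_ix$ is annihilated by $\bar w_i(\mfm)$.

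It remains to check that $\bar w_i(\mfm)$ really is the maximal ideal $\mfm_{\bar w_i(\chi)}$. Because $\bar w_i$ is an automorphism of $C$, the image $\bar w_i(\mfm)$ is a maximal ideal, and $C/\bar w_i(\mfm)\cong C/\mfm=\tilde k$ via $\bar w_i$. Using the characterisation $M_\chi=\{x\in M\mid \mfm_\chi x=0\}$ from the definition of weight spaces, we conclude that $Y_ix\in M_{\bar w_i(\chi)}$.

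The only delicate point is the bookkeeping of conventions. One must confirm that $Y_iZ_v=Z_{\bar w_i(v)}Y_i$ holds as an identity even when one side vanishes. Lemma~\ref{le:YiZv} asserts exactly this identity, and it follows from $w_iv=\bar w_i(v)w_i$ in $W(\tilde A_{n-1})$ together with equal lengths, via~\eqref{eq:YwYw'}. One must also interpret $\bar w_i(\chi)$ as the character with kernel $\bar w_i(\mfm)$. Since $\bar w_i$ has order two, there is no ambiguity between $\bar w_i$ and its inverse, so this causes no real obstacle.
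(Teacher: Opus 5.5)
Your proof is correct and is essentially the paper's argument: both rest on the commutation rule $Y_iZ_v=Z_{\bar w_i(v)}Y_i$ of Lemma~\ref{le:YiZv}, which the paper applies to each basis element to get $Z_vY_ix=Y_iZ_{\bar w_i(v)}x=\bar w_i(\chi)(Z_v)Y_ix$. Your ideal-theoretic version, $\bar w_i(\mfm)Y_ix=Y_i\mfm x=0$, is the same computation, and it has the minor advantage of not treating $\chi(Z_{\bar w_i(v)})\in\tilde k$ as a scalar passing through $Y_i$ when $\tilde k\ne k$.
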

\begin{proof}
For $x\in M_\chi$ and $v=(r_1,\dots,r_n)\in\bZ^{n-1}$, using
Lemma~\ref{le:YiZv} we have  
\begin{equation*} 
Z_vY_ix=Y_iZ_{\bar w_i(v)}x=\chi(Z_{\bar w_i(v)})Y_ix = \bar w_i(\chi)(Z_v)Y_ix,
\end{equation*}
and so $Y_ix$ is in the $\bar w_i(\chi)$ weight space $M_{\bar w_i(\chi)}$.
\end{proof}

\begin{lemma}\label{le:vwiwiv}
Given $i$ with $0\le i\le n-1$ and $v=(r_1,\dots,r_n)\in\bZ^{n-1}$
with $\bar w_i(v)\ne v$, i.e.,
\[ \begin{cases} r_i\ne r_{i+1} & 1\le i\le n-1 \\ r_n\ne r_1 &
    i=0 \end{cases} \]
we have $Y_{vw_i}Y_{w_iv}=Z_v^2$.
\end{lemma}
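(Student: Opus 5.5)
The identity should follow directly from the multiplication rule \eqref{eq:YwYw'} together with Proposition~\ref{pr:w'w''}. Set $w'_i=vw_i$ and $w''_i=w_iv$, viewed as elements of $W(\tilde A_{n-1})$. Here $v$ is regarded as an element of the normal subgroup $\bZ^{n-1}$, so that $Y_v=Z_v$. The notation $Y_{vw_i}$ means the basis element of $A$ indexed by the group element $vw_i$. It does \emph{not} mean the product $Z_vY_i$, which may well be zero.

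The plan is to compute the product of these two basis elements using \eqref{eq:YwYw'}:
\[ Y_{vw_i}Y_{w_iv}=\begin{cases} Y_{(vw_i)(w_iv)} & \ell(vw_iw_iv)=\ell(vw_i)+\ell(w_iv),\\ 0 & \text{otherwise.}\end{cases} \]
Under the hypothesis $\bar w_i(v)\ne v$, Proposition~\ref{pr:w'w''} gives exactly the length additivity $\ell(w'_iw''_i)=\ell(w'_i)+\ell(w''_i)$. It also gives $w'_iw''_i=vw_i^2v=v^2=(2r_1,\dots,2r_n)$. Hence the product equals $Y_{v^2}=Z_{2v}$ in vector notation.

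It remains to identify $Z_{2v}$ with $Z_v^2$. By Lemma~\ref{le:ZvZv'}, or equivalently Corollary~\ref{co:ellZ}, the product $Z_vZ_v$ is nonzero. The reason is that there is no pair of coordinates with $r_i<r_j$ and simultaneously $r_i>r_j$. Therefore $Z_v^2=Z_{v+v}=Z_{2v}$, which proves the claim.

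The only real content is the length additivity, and it is already supplied by Proposition~\ref{pr:w'w''}. If I wanted to avoid relying on it, I would argue as follows. Corollary~\ref{co:ell6} gives $\ell(vw_i)=\ell(v)\pm1$ and $\ell(w_iv)=\ell(v)\mp1$, with opposite signs, because $r_i\ne r_{i+1}$ (respectively $r_n\ne r_1$ when $i=0$). The two lengths therefore sum to $2\ell(v)=\ell(2v)$, using the formula $\ell(v)=\sum_{i<j}|r_i-r_j|$. The one point that needs care is the sign bookkeeping for the case $i=0$, and that is handled uniformly by Corollary~\ref{co:ell6}.
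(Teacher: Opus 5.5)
Your proof is correct and is essentially the paper's argument: the paper simply says the lemma follows immediately from Proposition~\ref{pr:w'w''}. You have spelled out the two implicit steps. First, you apply the multiplication rule \eqref{eq:YwYw'} to the basis elements $Y_{vw_i}$ and $Y_{w_iv}$. Second, you identify $Y_{v^2}=Z_{2v}$ with $Z_v^2$ via Lemma~\ref{le:ZvZv'}.
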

\begin{proof}
This follows immediately from Proposition~\ref{pr:w'w''}.
\end{proof}

\begin{theorem}\label{th:weights}
If $\chi$ is a weight of an $A$-module then so is $\sigma(\chi)$ for
every $\sigma\in\sym{n}$, and the weight spaces for $\chi$ and
$\sigma(\chi)$ have the same dimension over $C/\mfm= \tilde k$ and
$C/\bar
w_i(\mfm)\cong\tilde k$ respectively. In particular, they have the same dimension
over $k$.
\end{theorem}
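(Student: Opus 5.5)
Since $\sym{n}$ is generated by the $\bar w_i$ with $0\le i\le n-1$ (indeed by $\bar w_1,\dots,\bar w_{n-1}$), it suffices to treat $\sigma=\bar w_i$ for a single $i$. Fix a weight $\chi$ with maximal ideal $\mfm$. If $\bar w_i(\mfm)=\mfm$ there is nothing to prove. Otherwise, by Lemma~\ref{le:chiZvne0} we can choose $v=(r_1,\dots,r_n)\in\bZ^{n-1}$ with $Z_v\notin\mfm$ and $r_i\ne r_{i+1}$ (resp.\ $r_n\ne r_1$ when $i=0$). Put $\lambda=\chi(Z_v)\in\tilde k^\times$. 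The plan is to use the elements $Y_{vw_i}$ and $Y_{w_iv}$ to give mutually inverse (up to the scalar $\lambda^2$) maps between $M_\chi$ and $M_{\bar w_i(\chi)}$.

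First, $Y_{vw_i}$ and $Y_{w_iv}$ map weight spaces to weight spaces. Each is a product of $Y_i$ with a basis element of $C$ possibly times $Y_i$ in a reduced word: writing $Y_{w_iv}$, by Lemma~\ref{le:YiZv} and~\eqref{eq:YwYw'} it equals either $Y_iZ_v$ or, if that product vanishes, we instead use the equality $Y_{w_iv}=Z_{\bar w_i(v)}Y_i$ (one of $Y_iZ_v$, $Z_{\bar w_i(v)}Y_i$ is nonzero and equals $Y_{w_iv}$ by length additivity, as in Proposition~\ref{pr:w'w''}). Since elements of $C$ preserve each weight space and $Y_i$ maps $M_\chi$ into $M_{\bar w_i(\chi)}$ by Lemma~\ref{le:permute-weights}, we get $Y_{w_iv}M_\chi\subseteq M_{\bar w_i(\chi)}$, and similarly $Y_{vw_i}M_{\bar w_i(\chi)}\subseteq M_\chi$. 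The only care needed is to check that the factorisation of $Y_{w_iv}$ and $Y_{vw_i}$ as (element of $C$)$\cdot Y_i$ or $Y_i\cdot$(element of $C$) is valid, which follows from the length computation in Proposition~\ref{pr:w'w''}: exactly one of $\ell(w_iv),\ell(vw_i)$ is $\ell(v)+1$ and the other $\ell(v)-1$, and in the shorter case one writes $v=w_i\cdot(w_iv)$ reduced, so $Y_{w_iv}$ is a left factor of $Z_v$; I would handle this by noting $w_iv=\bar w_i(v)w_i$ and applying Lemma~\ref{le:YiZv}.

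Next, by Lemma~\ref{le:vwiwiv}, $Y_{vw_i}Y_{w_iv}=Z_v^2$, which acts on $M_\chi$ as the nonzero scalar $\lambda^2$. Hence $Y_{w_iv}\colon M_\chi\to M_{\bar w_i(\chi)}$ is injective. Symmetrically, applying the same argument with $v$ replaced by $\bar w_i(v)$ (whose $Z$ is not in $\bar w_i(\mfm)$, and for which $w_i\bar w_i(v)= v w_i$ and $\bar w_i(v)w_i=w_iv$) gives $Y_{w_iv}Y_{vw_i}=Z_{\bar w_i(v)}^2$ acting on $M_{\bar w_i(\chi)}$ as $\bar w_i(\chi)(Z_{\bar w_i(v)})^2=\lambda^2\ne0$, so $Y_{vw_i}$ is injective the other way. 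Thus both weight spaces are nonzero simultaneously and the maps are bijections.

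Finally, for dimensions: the map $Y_{w_iv}$ is additive and semilinear with respect to the isomorphism $C/\mfm\to C/\bar w_i(\mfm)$ induced by $\bar w_i$ (since $Y_{w_iv}c=\bar w_i(c)Y_{w_iv}$ for $c\in C$ by Lemma~\ref{le:YiZv}), so it is a semilinear bijection and preserves dimension over $\tilde k$; equivalently it is $k$-linear bijective, giving equal $k$-dimensions directly. The main obstacle I expect is the bookkeeping in the first step, ensuring $Y_{w_iv}$ genuinely intertwines the weights correctly, rather than the injectivity, which is immediate from Lemma~\ref{le:vwiwiv}.
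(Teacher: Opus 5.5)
Your strategy is the paper's: use $Y_{w_iv}$ and $Y_{vw_i}$, whose composites are $Z_v^2$ and $Z_{\bar w_i(v)}^2$. These act by the nonzero scalar $\chi(Z_v)^2$ on $M_\chi$ and on $M_{\bar w_i(\chi)}$ respectively. The flaw is in your first step.

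Lemma~\ref{le:YiZv} gives $Y_iZ_v=Z_{\bar w_i(v)}Y_i$ unconditionally, so these two products vanish together. It is not the case that one of them is nonzero and equals $Y_{w_iv}$. Take $1\le i\le n-1$ and $r_i<r_{i+1}$. Then Corollary~\ref{co:ell6} gives $\ell(w_iv)=\ell(v)-1$, both products are zero, and $Y_{w_iv}$ is not of the form $Y_ic$ or $cY_i$ with $c\in C$ at all: $w_iv=w_iu$ forces $u=v$, and $w_iv=uw_i$ forces $u=\bar w_i(v)$, both of the wrong length.

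Your fallback does not repair this. The factorisation $Z_v=Y_iY_{w_iv}$ (so $Y_{w_iv}$ is a right factor) says nothing about where $Y_{w_iv}$ sends $M_\chi$. Rewriting $w_iv=\bar w_i(v)w_i$ does not help either, because $Z_{\bar w_i(v)}Y_i=0$ here.

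Nor can this case be dodged by choosing $v$ differently. If $\mfm\supseteq\mfp_0$, every $v$ with $Z_v\notin\mfm$ satisfies $r_1\le\dots\le r_n$. So for $1\le i\le n-1$ the map out of $M_\chi$ is always the short one. The only map your factorisation controls is then $Y_{vw_i}=Z_vY_i\colon M_{\bar w_i(\chi)}\to M_\chi$, and that map cannot prove $M_{\bar w_i(\chi)}\ne 0$.

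The missing step is simply to use a reduced word. Write $Y_{w_iv}=Y_{j_1}\cdots Y_{j_m}$ and apply Lemma~\ref{le:permute-weights} $m$ times; this gives $Y_{w_iv}M_\chi\subseteq M_{\overline{w_iv}(\chi)}=M_{\bar w_i(\chi)}$, whatever the length of $w_iv$. This is exactly how the paper argues.

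Equivalently, iterating Lemma~\ref{le:YiZv} along a reduced word gives $Y_wZ_u=Z_{\bar w(u)}Y_w$ for all $w\in W(\tilde A_{n-1})$ and $u\in\bZ^{n-1}$. That is the intertwining relation you already invoke in your last paragraph, where it likewise needs this justification rather than Lemma~\ref{le:YiZv} for a single generator. It yields the weight-space statement at once.

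With that in place, the rest of your argument is correct and matches the paper: injectivity from the two composites, and the semilinear bijection for the dimension count.
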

\begin{proof}
It suffices to show that each $\bar w_i(\chi)$ is
a weight, since the $\bar w_i$ generate $\sym{n}$. 
If $\bar w_i(\chi)=\chi$, this is obvious, so suppose that 
$\bar w_i(\chi)\ne \chi$. Using Lemma~\ref{le:chiZvne0}, choose
$v=(r_1,\dots,r_n)$ such that $r_i\ne r_{i+1}$ and
$Z_v\not\in\mfm_\chi$, so that $\chi(Z_v)\ne 0$. By
Lemma~\ref{le:permute-weights}, the operations 
$Y_{vw_i}$ and $Y_{w_iv}$ swap the spaces $M_{\chi}$ and
$M_{\bar w_i(\chi)}$, because 
\[ Y_{w_iv}M_\chi\subseteq M_{\overline{w_iv}(\chi)}
=M_{\bar w_i(\chi)},\qquad
Y_{vw_i}M_{\bar w_i(\chi)}\subseteq M_{\overline{vw_i}\bar
  w_i(\chi)}=M_\chi, \]
and by Lemma~\ref{le:vwiwiv} the composite
equals the action of $Z_v^2$. 
This is equal to multiplication by
$\chi(Z_{v})^2$, which is a non-zero scalar in $\tilde k$ by
choice of $v$. Similarly, the composite $Y_{w_iv}Y_{vw_i}$ is equal
to multiplication by $\bar w_i(\chi)(Z_{\bar w_i(v)})^2$.  It follows that they induce an 
isomorphism $M_{\chi}\cong M_{\bar w_i(\chi)}$. Thus all the
weight spaces $M_{\sigma(\chi)}$ have the same dimension over $C/\mfm\cong\tilde
k$ and over $C/\bar w_i(\mfm)=\bar w_i(C/\mfm)\cong \tilde k$
respectively, and hence also the same dimension over $k$. 
\end{proof}

\begin{corollary}\label{co:weights}
Let $S$ be a simple $A$-module. Then $S$ is the direct sum of its
weight spaces, and these form a single orbit of $\sym{n}$,
lying above the central character of the module.
\end{corollary}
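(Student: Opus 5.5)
The plan is to combine the finite dimensionality of $S$ with Lemma~\ref{le:direct-sum}, Theorem~\ref{th:weights} and simplicity. The argument has three steps.

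\textbf{Step 1: $S$ is the sum of its weight spaces.} By Theorem~\ref{th:fd}, $S$ is finite dimensional and non-zero. By the remark following the definition of weight spaces, $S$ therefore has at least one weight $\chi$. Lemma~\ref{le:direct-sum} says that the sum $\bigoplus_\chi S_\chi$ of all weight spaces is a direct sum and is an $A$-submodule of $S$. Since it is non-zero and $S$ is simple, it equals $S$. So $S$ is the direct sum of its weight spaces.

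\textbf{Step 2: the weights form a single $\sym{n}$-orbit.} Fix a weight $\chi$. Theorem~\ref{th:weights} shows that every $\sigma(\chi)$ is also a weight. I would then consider $N=\sum_{\sigma\in\sym{n}}S_{\sigma(\chi)}$. Lemma~\ref{le:permute-weights} gives $Y_iS_{\sigma(\chi)}\subseteq S_{\bar w_i\sigma(\chi)}$, so $N$ is stable under every generator $Y_i$. It is also stable under $C$, since each weight space is a $C$-submodule. Hence $N$ is a non-zero $A$-submodule, and simplicity forces $N=S$. Because the sum of weight spaces is direct, no weight outside the orbit of $\chi$ can occur. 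Thus the weights are exactly the orbit $\sym{n}\chi$.

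\textbf{Step 3: the weights lie over the central character.} Let $\mfm$ be the maximal ideal of $R$ that is the kernel of $\chi_S$, as in Definition~\ref{def:central-character}. The restriction of $S$ to $R$ is a sum of copies of $R/\mfm$, so $\mfm$ annihilates $S$. For each weight $\chi'$, the ideal $\mfm_{\chi'}\cap R$ is a prime of $R$. The image of $R$ in $\End_k(S_{\chi'})$ is the field $R/\mfm$, because $R$ acts through $\chi_S$. On the other hand, $R$ acts on $S_{\chi'}$ through $C/\mfm_{\chi'}$. Comparing kernels gives $\mfm_{\chi'}\cap R=\mfm$, so every weight lies above $\mfm$. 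This agrees with Theorem~\ref{th:SpecC}, which says the primes over a fixed prime of $R$ form a single $\sym{n}$-orbit.

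I expect the main point needing care to be Step 2. One must check that the candidate submodule is genuinely $A$-stable, and this is exactly what Lemma~\ref{le:permute-weights} provides, since the $Y_i$ generate $A$. One must also rule out extra weights outside the orbit, which the directness in Lemma~\ref{le:direct-sum} handles.
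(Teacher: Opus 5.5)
Your proposal is correct and follows essentially the paper's argument: Theorem~\ref{th:weights} makes the set of weights closed under $\sym{n}$, Lemma~\ref{le:permute-weights} makes the sum over the orbit of a weight an $A$-submodule (hence all of $S$ by simplicity), and Lemma~\ref{le:direct-sum} gives directness and excludes weights outside that orbit. Your Step~3 spells out a point the paper leaves implicit, namely $\mfm_{\chi'}\cap R=\mfm$, obtained by comparing the kernels of the actions of $R$ and $C$ on $S_{\chi'}$. Your Step~1 is redundant, since Step~2 already gives it.
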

\begin{proof}
By Theorem~\ref{th:weights}, the weights of a simple module $S$ are a
union of orbits of $\sym{n}$.
If $\chi$ is a weight of $S$, then by Lemma~\ref{le:permute-weights}
the sum of the weight spaces for weights in the orbit of $\chi$ forms
a submodule. By Lemma~\ref{le:direct-sum}, the sum is direct.
\end{proof}

Next, we discuss the two extreme cases: the trivial orbit and regular orbits.

\begin{definition}
Let $V$ be an $A$-module. We say that $V$ is a \emph{trivial module}
if $V\cong A/J$ is one dimensional and all the $Y_i$ act as zero. This 
has just one weight space, corresponding to the \emph{zero weight}, 
where the maximal ideal $\mfm_0$ is the one generated by monomials in 
$C$ of positive degree. 
\end{definition}

\begin{theorem}{\bf (Trivial module)}\label{th:trivial}
\begin{enumerate}
\item 
$C/\mfm_0C$ is a finite dimensional local algebra with residue field $k$. 
\item
$A/A\mfm_0A$ is a finite dimensional local algebra with residue field $k$.
\item
If $S$ be a simple $A$-module with a zero 
weight, then $S$ is a trivial module.
\end{enumerate}
\end{theorem}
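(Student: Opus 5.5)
For (1), I will show that $C/\mfm_0C$ is finite dimensional; since $\mfm_0$ is a maximal ideal of $C$, this is the same as showing that $\mfm_0$ is a nilpotent-modulo-itself situation. More usefully: $\mfm_0$ is spanned by the $Z_v$ with $v\ne 0$, so $C/\mfm_0\cong k$. Reading the statement as concerning $C/\mfm_0C$ with $\mfm_0$ possibly meaning the ideal of $R$ of positive-degree invariants, the key step is to show $C/(R_+C)$ is finite dimensional. Here $R_+$ is spanned by the $\hat Z_v$ with $v\ne0$.

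By Theorem~\ref{th:Cfg/Csigma}, $C$ is a finitely generated $R$-module. Nakayama's graded version (everything is graded by length) then gives that $C/R_+C$ is finite dimensional. It is graded with degree-zero part $k$, so it is local with residue field $k$.

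Concretely, I would check nilpotence via Corollary~\ref{co:Noether-norm}. For $v$ non-descending in some total order, $Z_v^n$ is a monomial in the $Z_{\sigma(v_i)}$. Moreover $Z_{\sigma(v_i)}\equiv \hat Z_{v_i}$ modulo the other orbit terms, and those terms annihilate it. So $Z_{\sigma(v_i)}^2=Z_{\sigma(v_i)}\hat Z_{v_i}\in R_+C$. Hence every positive-degree monomial is nilpotent modulo $R_+C$, and finite generation of $C$ finishes the argument.

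For (2), Theorem~\ref{th:fg} together with Theorem~\ref{th:Cfg/Csigma} shows that $A$ is finitely generated over $R$. Hence $A/A\mfm_0A$ is finite dimensional, since $\mfm_0$ is central and $A\mfm_0A=A\mfm_0$. It is graded by length with degree-zero part $k$. The positive-degree part, which is the image of $J$, is therefore a nilpotent ideal, because the grading is bounded in a finite dimensional algebra. The quotient by it is $k$, so the algebra is local with residue field $k$.

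For (3), the weights of $S$ lie over its central character $\mfm$ by Corollary~\ref{co:weights}. A zero weight means that the central character is $\mfm_0\cap R=R_+$. So $S$ is a simple module for $A/A R_+A$, which is local by (2); its unique simple module is $A/J$, the trivial module.

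The main obstacle is pinning down which ideal $\mfm_0$ denotes and making the graded Nakayama/nilpotence step precise. In particular, I need to verify that the augmentation ideal of $A/AR_+A$ is nilpotent, which follows from finite dimensionality plus the nonnegative grading.
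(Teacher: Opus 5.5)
Your proof is correct and follows essentially the paper's route. Finite generation over the commutative subalgebras (Theorems~\ref{th:fg} and~\ref{th:Cfg/Csigma}) gives finite dimensionality, and boundedness of the length grading in a finite dimensional graded quotient makes the augmentation ideal nilpotent. Reading $\mfm_0$ as $R_+=\mfm_0\cap R$ is sensible and gives a slightly stronger statement, since the versions with the $C$-ideal $\mfm_0$ are quotients of yours.

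The only real deviation is in (3). There you use the central character (Schur) to see that $R_+$ kills all of $S$, whereas the paper observes via Lemma~\ref{le:permute-weights} that the $\sym{n}$-fixed zero weight space is already a submodule, hence equal to $S$. Both arguments work.
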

\begin{proof}
(1) This follows from Theorem~\ref{th:Cfg/Csigma}.

(2) By Theorem~\ref{th:fg}, $A/A\mfm_0A$ is a finitely generated
$C$-module with $\mfm_0$ acting trivially. Therefore $A/A\mfm_0A$ is a
finitely generated $C/\mfm_0C$-module, and hence $A/A\mfm_0A$ is
finite dimensional over $k$ by (1). 
By Lemma~\ref{le:Ywd}, if $w$ is a word of positive length, some power
of $Y_w$ is in $\mfm_0C\subseteq A\mfm_0A$. Let $Q$ be the ideal of $A/A\mfm_0A$ spanned by $Y_w+A\mfm_0A$ with $w\neq 1$. Since $A\mfm_0A$ is a homogeneous ideal of $A$, $Q$ inherits the grading and hence $Q=\bigoplus_{i\ge 1} Q_i$. Since $Q$ is finite-dimensional over $k$, there exists $N$ such that  $Q_i=0$ for all $i>N$. So $Q^{N+1}=0$. Thus any maximal ideal of
$A/A\mfm_0A$ contains all $Y_w+A\mfm_0A$ and hence it is
$J/A\mfm_0A$. Thus $A/A\mfm_0A$ is local.

(3) By Lemma~\ref{le:permute-weights}, the zero weight space 
of $S$ is a nonzero submodule of $S$, and is hence the whole of $S$. Thus
$S$ is a simple module for $A/A\mfm_0A$, and then by (2), it is 
isomorphic to the trivial module for $A$.
\end{proof}

\begin{theorem}\label{th:n!}
Suppose that $\mfm$ is a maximal ideal  of $C$,
with corresponding map 
$\chi\colon C\to \tilde k$, and suppose that the stabiliser of $\chi$
in $\sym{n}$ is trivial. If $S$ is a simple $A$-module and $\chi$ is a weight of $S$
then each weight space $S_{\bar w(\chi)}$ is one dimensional, and $S$ has
dimension $n!$ over $\tilde k$.
\end{theorem}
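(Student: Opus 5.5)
By Corollary~\ref{co:weights}, $S$ is the direct sum of the weight spaces $S_{\sigma(\chi)}$ for $\sigma\in\sym{n}$. Since the stabiliser of $\chi$ is trivial, these are $n!$ distinct weights. By Theorem~\ref{th:weights}, all of these weight spaces have the same dimension $d$ over $\tilde k$. Hence $\dim_{\tilde k}S=n!\,d$, and it suffices to show that $d=1$.

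The plan is to show that the $\chi$ weight space $S_\chi$ is a simple module over some subalgebra that acts on it through a commutative quotient. Concretely, consider the subspace $e A e$ of operators preserving $S_\chi$. Here $e$ is understood as the projection of $S$ onto $S_\chi$ along the other weight spaces. This projection is realised by an element of $C$: using Lemma~\ref{le:direct-sum}, since the weights are distinct maximal ideals, the Chinese remainder theorem gives an element of $C$ acting as $1$ on $S_\chi$ and as $0$ on the other weight spaces. By a standard argument, $S_\chi=eS$ is a simple $eAe$-module, since $S$ is simple. So the task reduces to showing that $eAe$ acts on $S_\chi$ through $\tilde k=C/\mfm$, i.e.\ that every element of $A$ that maps $S_\chi$ into itself acts there as an element of $C$.

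Take a basis element $Y_w$ with $w=u\sigma$, where $u\in\bZ^{n-1}$ and $\bar w=\sigma$. By Lemma~\ref{le:permute-weights}, $Y_w$ maps $S_\chi$ into $S_{\bar w(\chi)}$. Because the stabiliser is trivial, $Y_wS_\chi\subseteq S_\chi$ forces either $\bar w=1$, in which case $Y_w\in C$ acts by a scalar of $\tilde k$, or else $Y_w$ acts as zero on $S_\chi$ (its image lies in a different weight space, so the component in $S_\chi$ vanishes). Therefore $eAe$ acts on $S_\chi$ through the image of $C$, which is the field $\tilde k$. A simple module over a field is one dimensional, so $d=1$ and $\dim_{\tilde k}S=n!$.

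The main obstacle is making the reduction to $eAe$ precise over a non-closed field, since $\tilde k$ depends on $\mfm$ and the various $C/\bar w(\mfm)$ are isomorphic but not equal. I would handle this by working with $k$-subspaces, and checking that for $0\ne x\in S_\chi$ we have $S_\chi=eAx$. This holds because $Ax=S$ by simplicity and $e$ is a projection, and by the previous paragraph $eAx=Cx=\tilde k x$. This avoids needing idempotents in $A$ itself.
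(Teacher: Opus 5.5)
Your proposal is correct and is essentially the paper's proof. The paper likewise takes the projection $\pi\colon S\to S_\chi$ along the other weight spaces, uses Lemma~\ref{le:permute-weights} and the trivial stabiliser to get $\pi(Y_wS_\chi)=0$ unless $\bar w=1$ (i.e.\ unless $Y_w\in C$), deduces $\pi(Ax)=Cx=\tilde k x$ for $0\ne x\in S_\chi$, and concludes by simplicity and Theorem~\ref{th:weights}; your realisation of the projection by an element of $C$ via the Chinese remainder theorem and the $eAe$ framing are harmless but unnecessary, since the projection coming from the direct sum decomposition of Corollary~\ref{co:weights} suffices.
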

\begin{proof}
By Corollary~\ref{co:weights}, $S$ is the direct sum of its weight
spaces, and the weight spaces form a regular orbit of $\sym{n}$.
Let $\pi\colon S \to S_{\chi}$ be the projection onto the $\chi$
weight space. Then
given any reduced word $w$ in the $w_i$, by
Lemma~\ref{le:permute-weights} we have $Y_wS_\chi\subseteq S_{\bar
  w(\chi)}$, and so unless $\bar w=1$ we have $\pi(Y_wS_\chi)=0$.
The words $w$ with $\bar w=1$ are the elements of $\bZ^{n-1}$, so the
conclusion is that $\pi(Y_wS_\chi)=0$ unless $Y_w\in C$. Thus
for $x\in S_\chi$ we have $\pi(A.x)=C.x=\tilde k.x$.
In other words, for $0\ne x\in S_\chi$, the $\chi$ weight space of the submodule of $S$
generated by $x$ is one dimensional over $\tilde k$. Since $S$ is
simple, we conclude that the weight spaces of $S$ are one dimensional
over $\tilde k$, using Theorem~\ref{th:weights}. Their sum is direct, so $S$ has dimension $n!$ over
$\tilde k$.
\end{proof}

\begin{theorem}\label{th:n!2}
The algebra $A$ is a prime affine PI algebra of PI degree $n!$.
The simple $A$-modules have dimension at most $n!$ over a finite
extension field $\tilde k$ of $k$. For 
a dense open subset of characters of $R$, there is exactly one 
simple module with that central character, and its dimension over
$\tilde k$ is $n!$.
\end{theorem}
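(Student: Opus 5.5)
The first assertion is assembled from earlier results. By Theorem~\ref{th:prime}, $A$ is prime. By Corollary~\ref{co:PI}, $A$ is affine PI. Moreover $A$ is a finitely generated module over its centre $R$, and $R$ is an affine normal domain (Theorems~\ref{th:Cfg/Csigma}, \ref{th:centre} and Section~\ref{se:R}). So $A$ is a prime Noetherian PI ring, finite over its centre, and we can use the standard structure theory from McConnell and Robson, Chapter~13.

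Write $d$ for the PI degree of $A$. By Posner's theorem, $Q(A)=A\otimes_R F$ is central simple of dimension $d^2$ over $F=\mathrm{Frac}(R)$. By Kaplansky's theorem applied to primitive quotients, every simple $A$-module $S$ with central character $\mfm$ (Definition~\ref{def:central-character}) is a module over a central simple algebra over $R/\mfm$ of PI degree at most $d$. Hence $\dim S\le d$ over a suitable finite extension $\tilde k$ of $R/\mfm$. The Artin--Procesi theorem says the Azumaya locus $U_1\subseteq\mathrm{Max}(R)$ is a non-empty open set. For $\mfm\in U_1$ we have $A/\mfm A\cong\Mat_d(R/\mfm)$ after a finite extension, so there is exactly one simple module with central character $\mfm$, and it has dimension exactly $d$. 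The whole statement therefore reduces to proving $d=n!$.

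To compute $d$, we construct a second non-empty open set $U_2\subseteq\mathrm{Max}(R)$. It consists of those maximal ideals all of whose preimages in $\Spec(C)$ have trivial stabiliser in $\sym{n}$. By Theorem~\ref{th:SpecC} and Lemma~\ref{le:stab(p)}, a point of $\Spec(C)$ has non-trivial stabiliser exactly when it lies in the intersection of two distinct irreducible components. The union $Z$ of these pairwise intersections is closed and proper in each component, and each component maps isomorphically onto $\Spec(R)$. Since $C$ is finite over $R$ (Theorem~\ref{th:Cfg/Csigma}), the image of $Z$ in $\Spec(R)$ is closed. This image is proper because it is a finite union of proper closed subsets of the irreducible space $\Spec(R)$. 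So $U_2$, its complement, is non-empty open.

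Since $\Spec(R)$ is irreducible, $U_1\cap U_2\ne\emptyset$. Choose $\mfm$ in this intersection and let $S$ be the unique simple module with central character $\mfm$. By Corollary~\ref{co:weights}, $S$ has a weight $\chi$ lying over $\mfm$, and $\chi$ has trivial stabiliser because $\mfm\in U_2$. Theorem~\ref{th:n!} then gives $\dim_{\tilde k}S=n!$, and since $\mfm\in U_1$ this dimension also equals $d$. Hence $d=n!$. The same argument shows that $U_1\cap U_2$ is the required dense open set of central characters.

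The main obstacle is bookkeeping over the fields. Theorem~\ref{th:n!} measures dimension over $\tilde k=C/\mfm_\chi$, whereas Azumaya theory measures it over $R/\mfm$ or a splitting field. I would handle this by first extending scalars to the algebraic closure $\bar k$. This is harmless because $A$, $C$ and $R$ are defined by monomial and combinatorial data and their bases are compatible with base change. Over $\bar k$ all residue fields are $\bar k$ and the two dimension counts coincide. The bound for arbitrary $k$ then follows by restricting back to $k$.
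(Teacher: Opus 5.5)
Your proposal is correct and follows essentially the same route as the paper. You intersect an Azumaya open set of $\Spec(R)$ supplied by Artin--Procesi (the paper takes $\Spec(R[c^{-1}])$ for a nonzero value $c$ of a central polynomial) with the open set of central characters whose weights have trivial stabiliser, where Theorem~\ref{th:n!} gives dimension $n!$, and conclude that the PI degree is $n!$. You also justify that the trivial-stabiliser locus is open and dense, which the paper only asserts. Your field bookkeeping is lighter than you expect: every maximal ideal $\mfm_\chi$ of $C$ contains a minimal prime $\mfp$ with $R\cong C/\mfp$, so $C/\mfm_\chi\cong R/(\mfm_\chi\cap R)$ already.
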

\begin{proof}
We refer the reader to Chapter~13 of McConnell and
Robson~\cite{McConnell/Robson:2001a}, Chapter~6 of Rowen~\cite{Rowen:1988b}, and
Chapter~III.1 of Brown and Goodearl~\cite{Brown/Goodearl:2002a},
especially Theorem~III.1.6 there,
for the general theory underlying this theorem.
 
By Theorem~\ref{th:prime} and Corollary~\ref{co:PI}, $A$ is a prime
affine PI algebra. Let $m$ be its PI degree, let $g_m$ be the
central polynomial defined in~\cite[\S13.5.11]{McConnell/Robson:2001a}.
Recall from~\cite[\S13.5.2]{McConnell/Robson:2001a} that a central
polynomial is a polynomial with integer coefficients whose values
lie in the centre but are not constant. Thus there exists
$0\ne c\in g_m(A)$, and such a $c$ is in the centre of $A$ but not in
$k$. Recall from
Theorem~\ref{th:centre} that the centre of $A$ is equal to $R$, which
by Corollary~\ref{co:domain} is an integral domain. It follows that
$\Spec(R)$ is an irreducible affine variety, and
the hypersurface complement
$\Spec(R[c^{-1}])$ is a dense open subset of $\Spec(R)$. 

By~\cite[Proposition~13.7.4]{McConnell/Robson:2001a}, 
all primes of $A[c^{-1}]$ are regular. The ring $A[c^{-1}]$ is a prime
affine PI algebra, so by the Artin--Procesi theorem
(Artin~\cite{Artin:1969a}, Procesi~\cite{Procesi:1972a}, see also
Amitsur~\cite{Amitsur:1973a} and
\cite[Theorem~13.7.14]{McConnell/Robson:2001a}),  $A[c^{-1}]$ is an
Azumaya algebra of PI degree $m$.
It follows that for $\mfp$ in the dense subset $\Spec(R[c^{-1}])$ of
$\Spec(R)$, the PI degree of $A_\mfp$ is 
equal to $m$. But by 
Theorem~\ref{th:n!}, there is a dense open 
subset of the maximal ideals $\mfm$ of $R$ such that the PI
degree of $A/\mfm A$ is equal to $n!$. These dense open subsets must
intersect, so it follows that $m=n!$.
Finally, if $\mfm$ is any maximal ideal of $R$, $A/\mfm A$ is an
affine PI algebra of degree at most $n!$, and so its simple modules
have dimension at most $n!$.
\end{proof}

\begin{remark}\label{rk:Azumaya-locus}
The set of prime ideals $\mfp$ of $R$ such that $A/\mfp A\cong
\Mat_{n!}(R/\mfp)$ is the Azumaya locus of $A$, see for example
Brown and Goodearl~\cite{Brown/Goodearl:1997a,Brown/Goodearl:2002a}. 
This is a dense open subset, 
equal to the set of primes where the map $\Spec(C)\to\Spec(R)$ is
unramified. These are the images of points in $\Spec(C)$ with
trivial stabiliser in $\sym{n}$.
\end{remark}

\begin{corollary}
Let $K$ be the field of fractions of the integral domain $R=Z(A)$, and
let $A_K=K\otimes_RA$. Then
$A_K$ is isomorphic to $\Mat_{n!}(K)$, and $A$ is an order in $A_K$.
\end{corollary}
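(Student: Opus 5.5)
The plan is to use the standard structure theory of prime PI algebras that are finite over their centres, together with Theorem~\ref{th:n!2}.

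\textbf{Step 1: $A$ is torsion-free over $R$.} By Theorem~\ref{th:prime}, $A$ is prime, and by Theorem~\ref{th:centre}, $R$ is its centre. In a prime ring every nonzero central element is a non-zero-divisor. Indeed, if $c\in R\setminus\{0\}$ and $ca=0$, then $cAa=Aca=0$, so primeness forces $a=0$. Hence $A$ is a torsion-free $R$-module, and the natural map $A\to A_K=K\otimes_R A$ is injective.

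\textbf{Step 2: $A_K$ is a central simple $K$-algebra of dimension $(n!)^2$.} Since $A$ is finitely generated over $R$, $A_K$ is finite dimensional over $K$. Every element of $A_K$ has the form $c^{-1}a$ with $c\in R\setminus\{0\}$, so $A_K$ is the central localisation of $A$. It is therefore prime with centre $K$, and the latter holds because a central element $c^{-1}a$ forces $a$ to be central in $A$.

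A finite dimensional prime algebra is simple Artinian, so $A_K\cong \Mat_m(D)$ with $D$ a division algebra central over $K$. Then $\dim_K A_K=m^2\dim_K D$, and by Posner's theorem (e.g.\ \cite[Theorem~13.6.5]{McConnell/Robson:2001a}) this dimension is the square of the PI degree. By Theorem~\ref{th:n!2} the PI degree is $n!$, so $\dim_K A_K=(n!)^2$.

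\textbf{Step 3: $D=K$.} This is the main obstacle. I would show that $A_K$ contains a maximal commutative subfield that is split, namely $L=K\otimes_R C$, the fraction ring of $C$ localised at $R\setminus\{0\}$.

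By Proposition~\ref{pr:minimal-primes} and Theorem~\ref{th:Zvhat}, $C$ is reduced with $n!$ minimal primes, and each $C/\mfp\cong R$. Every element of $R\setminus\{0\}$ lies outside every minimal prime of $C$. Indeed, $\hat Z_v$ maps to the nonzero element $Z_v$ in $C/\mfp_0$, and the $\sym{n}$-symmetry extends this to all minimal primes. It follows that
\[ L\cong \prod_{\mfp} K\otimes_R C/\mfp\cong K^{n!}, \]
a split commutative semisimple subalgebra of $A_K$ of dimension $n!$ over $K$.

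A $K$-algebra $\Mat_m(D)$ containing $K^{n!}$ with $m^2\dim D=(n!)^2$ must have $D=K$. To see this, note that $K^{n!}$ yields $n!$ orthogonal nonzero idempotents, so $m\ge n!$, while $m\le n!$ since $m^2\le (n!)^2$. Thus $m=n!$ and $A_K\cong\Mat_{n!}(K)$.

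As an alternative route, one could invoke the Artin--Procesi argument in Theorem~\ref{th:n!2}: $A[c^{-1}]$ is Azumaya of rank $(n!)^2$, and tensoring up to $K$ gives a central simple algebra containing the split torus $L$.

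\textbf{Step 4: $A$ is an order in $A_K$.} Since $A$ embeds in $A_K$ and every element of $A_K$ is $c^{-1}a$ with $c$ a central regular element, $A$ is an order in $A_K$.
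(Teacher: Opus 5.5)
Your proposal is correct, and it is more complete than the paper's proof. Steps 1, 2 and 4 unpack what the paper does in one line: it cites Posner's theorem together with the PI degree $n!$ from Theorem~\ref{th:n!2}.

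The genuine difference is your Step 3. Posner's theorem plus PI degree $n!$ only shows that $A_K$ is central simple over $K$ of degree $n!$, i.e.\ $A_K\cong\Mat_m(D)$ with $m^2\dim_K D=(n!)^2$. The paper's citation does not by itself exclude a non-trivial division algebra $D$. Nor does the fact that the closed fibres on the Azumaya locus are matrix algebras force the generic fibre to split. Your embedding of the split commutative algebra $K\otimes_R C\cong K^{n!}$ into $A_K$ supplies exactly what is needed to get $D=K$. Counting its $n!$ orthogonal idempotents against the composition length $m$ of $A_K$ as a right module over itself gives $m\ge n!$, and with $m^2\dim_K D=(n!)^2$ this forces $m=n!$ and $D=K$.

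A few points in Step 3 deserve an explicit line.

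\begin{itemize}
\item \emph{$R\cap\mfp=0$ for every minimal prime $\mfp$.} Your sentence about $\hat Z_v\mapsto Z_v$ only treats basis elements. The clean statement is that $R\to C\to C/\mfp_0$ is an isomorphism by Theorem~\ref{th:Zvhat}. Since $R$ is fixed by $\sym{n}$, it follows that $R\cap\sigma(\mfp_0)=\sigma(R\cap\mfp_0)=0$ for each $\sigma$.

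\item \emph{The isomorphism $L\cong\prod_{\mfp}K\otimes_R C/\mfp$.} Your ``it follows that'' hides a surjectivity statement. Injectivity is clear: $C$ is reduced, and $K$ is flat over $R$. Flatness also gives the embedding $L\hookrightarrow A_K$. Surjectivity is the Chinese remainder theorem. It applies because $L$ is finite dimensional over $K$ (Theorem~\ref{th:Cfg/Csigma}), hence zero-dimensional. So the $n!$ distinct proper primes $K\otimes_R\mfp$ are maximal and pairwise comaximal, and each quotient $K\otimes_R C/\mfp\cong K\otimes_R R=K$.

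\item \emph{Terminology.} $L$ is a maximal commutative subalgebra of $A_K$, not a subfield.
\end{itemize}

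For the order statement in Step 4, if you use the classical quotient ring definition, note also that every regular element of $A$ is a unit in $A_K$. It is regular in $A_K$ because $A_K$ is a central localisation of $A$, and regular elements of the Artinian ring $A_K$ are units.
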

\begin{proof}
This follows from Theorem~\ref{th:n!2} using Posner's theorem~\cite{Posner:1960a}. 
See also~\cite[Theorem~13.6.5]{McConnell/Robson:2001a}.
\end{proof}

\begin{corollary}
The nilCoxeter algebra $A$ is a bounded Goldie ring.
\end{corollary}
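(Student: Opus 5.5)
We need to show $A$ is a bounded Goldie ring. The plan is to assemble standard facts about prime Noetherian PI rings rather than argue from the combinatorics directly.

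First, $A$ is Noetherian on both sides. By Theorem~\ref{th:fg}, $A$ is finitely generated as a left and as a right module over $C$, and $C$ is Noetherian by Lemma~\ref{le:Cfg/k}. So every left ideal of $A$ is a left $C$-submodule of a finitely generated module over a Noetherian ring. Hence it is finitely generated over $C$, and a fortiori over $A$. The same argument works on the right. Alternatively, by the lemma following Theorem~\ref{th:centre}, $A$ is finitely generated over the Noetherian centre $R$, which gives the same conclusion. A Noetherian ring is in particular Goldie: it has ACC on annihilators, and it has no infinite direct sums of one-sided ideals. So $A$ is a (prime, by Theorem~\ref{th:prime}) Goldie ring.

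Second, for boundedness I would use that $A$ is a prime PI ring (Theorem~\ref{th:n!2}). The cleanest route is the preceding corollary: $A$ is an order in $A_K\cong\Mat_{n!}(K)$, with $K$ the fraction field of the centre $R$. A prime PI ring satisfies Posner's theorem, so its classical quotient ring is obtained by inverting the nonzero central elements. Given an essential right ideal $I$, the ideal $IA_K=A_K$, so $1=\sum x_i c^{-1}$ for some $x_i\in I$ and a single nonzero $c\in R$ (using a common denominator). Then $c\in I$. Thus $I$ contains the nonzero two-sided ideal $cA=Ac$, because $c$ is central. The left case is symmetric, or follows from the anti-automorphism of Remark~\ref{rk:anti-aut}. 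Equivalently, one can cite \cite[Proposition~13.6.6]{McConnell/Robson:2001a} or Rowen, which states that prime PI rings are bounded, and in fact that every essential one-sided ideal meets the centre nontrivially.

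The only step needing care is the claim that an essential right ideal $I$ satisfies $IA_K=A_K$. This holds because, in a semiprime Goldie ring, an essential right ideal contains a regular element $s$ (Goldie's theorem). Then $s$ is invertible in $A_K$, so $IA_K\supseteq sA_K=A_K$. Everything else is a citation.
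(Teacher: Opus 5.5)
Your proof is correct, but it is organised differently from the paper's, which is a one-line citation: $A$ is a prime PI ring by Theorem~\ref{th:n!2}, and \cite[Corollary~13.6.6]{McConnell/Robson:2001a} says every such ring is a bounded Goldie ring.

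You instead get the Goldie property from two-sided Noetherianity, using the concrete finiteness results Theorem~\ref{th:fg} and Lemma~\ref{le:Cfg/k}. You get boundedness by essentially reproving the cited corollary. An essential right ideal contains a regular element, which becomes a unit in the central localisation $A_K$. Clearing central denominators then puts some $0\ne c\in R$ into $I$, so $I\supseteq cA=Ac$.

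Your route buys self-containment and makes explicit the Noetherian property asserted in the introduction. It also gives the sharper conclusion that every essential one-sided ideal meets $R\setminus\{0\}$.

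It does not avoid PI theory, though. The boundedness step still rests on Posner's theorem through the preceding corollary. Once you have that corollary, the Noetherian detour is optional: being an order in the simple Artinian ring $A_K$ already makes $A$ a prime (left and right) Goldie ring by Goldie's theorem.

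The paper's route is shorter and uses only that $A$ is prime PI. (The paper cites 13.6.6 as a Corollary, not a Proposition.)
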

\begin{proof}
This follows from Theorem~\ref{th:n!2} 
using~\cite[Corollary~13.6.6]{McConnell/Robson:2001a}.
\end{proof}

\section{\texorpdfstring{Example: The case $\tilde A_1$}
{Example: The case Ã₁}}\label{se:A1tilde}

The nilCoxeter algebra $A=\mcN(\tilde A_1)$ of type $\tilde A_1$ has two generators, $Y_0$
and $Y_1$, subject only to the relations $Y_0^2=Y_1^2=0$. It is the
algebra whose representation theory is described in
Ringel~\cite{Ringel:1975a}, so its finite dimensional modules are
strings and bands.

The subalgebra $C$ is generated by $Z_v=Y_{10}$ and
$Z_{-v}=Y_{01}$ where $v=(-1,1)$, swapped by the action of $\sym{2}$, and
annihilating each other. Recall from Notation~\ref{not:Y} that
$Y_{10}$ denotes $Y_1Y_0$, and so on. The
$\sym{2}$-invariants are $R=C^{\sym{2}}=k[z]$ where $z=Y_{10}+Y_{01}=Z_v+Z_{-v}$.
There are two minimal primes, $\mfp=(Y_{10})$ and $\mfp'=(Y_{01})$ in
$C$, which consist respectively of the circles to the right and to the left of
the identity of $A$  (the star) in the following diagram.
\[ \xymatrix@=0.5cm{&\cdot\ar@{..}[l]&
\smash{\circ}\ar@[red][l]&\cdot\ar@[blue][l]&\smash{\circ}\ar@[red][l]&
\cdot\ar@[blue][l]&\smash{\bigstar}\ar@[red][l]\ar@[blue][r]&
\cdot\ar@[red][r]&\smash{\circ}\ar@[blue][r]&\cdot\ar@[red][r]
&\smash{\circ}\ar@[blue][r]&\cdot\ar@{..}[r]&} \]
In this diagram, the vertices represent monomial basis elements. The
blue arrows are for left multiplication by $Y_0$, red for $Y_1$.\medskip

Let $M$ be a non-trivial simple $A$-module. Since the non-trivial
characters of $C$ are permuted regularly by $\sym{2}$, by
Theorems~\ref{th:weights} and~\ref{th:trivial} the weights of $M$ are
$\chi$ and $-\chi$ for some non-trivial character $\chi$ of $C$. By
Theorem~\ref{th:n!} the weight spaces are one dimensional over 
$\tilde k=C/\mfm_\chi\cong C/\mfm_{-\chi}\cong
R/R\cap\mfm_\chi$, and $\tilde k\cong k[z]/(f(z))$ for some irreducible monic polynomial
$f(z)\ne z$. The module $M$ may therefore be depicted by the following
diagram:
\begin{equation}\label{eq:2dim-simple-tilde}
\xymatrix{k[z]/(f(z)) \ar@/^{4ex}/[r]^{Y_0}_1 &
    k[z]/(f(z))\ar@/^{4ex}/[l]^{Y_1}_z} 
\end{equation}
If $k$ is algebraically closed then the irreducible monic polynomials
other than $z$ are $z-\lambda$ with $\lambda\ne 0$. Thus $M$ is two
dimensional over $k$, with diagram
\begin{equation}\label{eq:2dim-simple}
\xymatrix{k \ar@/^{3ex}/[r]^{Y_0}_1 &
    k\ar@/^{3ex}/[l]^{Y_1}_\lambda} 
\end{equation}
Thus every central character corresponds to a two dimensional simple
module, except the trivial character, corresponding to the trivial one
dimensional module.

Another way to see this with less machinery is as follows.
Let $M$ be a non-trivial simple $A$-module.
Let $V_0=\Ker(Y_1)$ and $V_1=\Ker(Y_0)$. Notice that these are
non-zero, because $V_0$ contains the image of $Y_1$, so if $V_0$ is
zero then $Y_1$ acts trivially, and then it follows that $Y_0$ also
acts trivially because $M$ is simple. Then $Y_0V_0\subseteq V_1$ and
$Y_1V_1\subseteq V_0$. Thus $V_0$ and $V_1$ span a submodule of $M$,
and if $M$ is simple then they span $M$. Furthermore, this sum is
direct since $V_0\cap V_1$ is a submodule of $M$.
The maps $Y_0\colon V_0\to V_1$ and
$Y_1\colon V_1\to V_0$ are injective, because $Y_0$ and $Y_1$ both
act as zero on the kernels, producing a trivial submodule if the
intersection of the
kernels were non-zero. If $Y_0\colon V_0\to V_1$ is not
surjective, then $V_0\oplus Y_0V_0$ is a proper submodule, and
similarly for $Y_1$. Thus $Y_0\colon V_0\to V_1$ and $Y_1\colon V_1\to
V_0$ are isomorphisms. The
composite $Y_{10}\colon V_0\to V_0$ is an automorphism. In fact it is
the automorphism induced by the central element $z=Y_{10}+Y_{01}$ since $Y_{01}$ acts as zero on $V_0$.

The  action of $z$ on $V_0$ makes it a simple
$k[z]$-module with non-zero action of $z$. The simple $k[z]$-modules are finite
dimensional, and are
parametrised by irreducible polynomials $f(z)$. 
So the simple $A$-modules are as given
in~\eqref{eq:2dim-simple-tilde}, as $f(z)$ runs over the irreducible
polynomials other than $z$.
If $k$ is
algebraically closed, the irreducible polynomials are linear, and the
simples form a one parameter family of two dimensional simples indexed
by the possible non-zero eigenvalues of $z$, so the simples are as
given in~\eqref{eq:2dim-simple}.\bigskip

We remark that from the above, Conjecture~\ref{conj:multinomial} is easily seen to be true in the
example of $\tilde A_1$. In the next two sections, we shall see that
it is also true in the cases of $\tilde A_2$ and $\tilde A_3$.

\section{\texorpdfstring{Example: The case $\tilde A_2$}
{Example: The case Ã₂}}\label{se:A2tilde}

The nilCoxeter algebra $A$ of type $\tilde A_2$ has three generators,
$Y_0$, $Y_1$ and $Y_2$, subject to the relations
\[ Y_0^2=Y_1^2=Y_2^2=0,\quad Y_{010}=Y_{101},\quad
  Y_{121}=Y_{212},\quad Y_{202}=Y_{020} \]
(Notation~\ref{not:Y}). 
A picture to enable the reader to visualise 
this algebra may be found in Figure~\ref{fig:A2tilde} on
page~\pageref{fig:A2tilde}. In this figure, the vertices represent
monomial basis elements. The
blue arrows are for left multiplication by $Y_0$, red for $Y_1$
and green for $Y_2$. The star is the identity element. 

\begin{figure}[h]
\centering
\scalebox{0.9}{
\begin{tikzpicture}[yscale={sqrt(3)/2}]
    \clip (-8.5,-7.5) rectangle (10,10);
    
    \tikzset{
    every path/.style={
      shorten <=4pt,
      shorten >=4pt
    }}
    %horizontal arrows on right half
    \foreach \i in {0,3,6}
    \foreach \j in {-6,0,6}
    {\draw [->,red] (0.5+\i,3+\j)--(1.5+\i,3+\j);
    \draw [->,green] (0.5+\i,1+\j)--(1.5+\i,1+\j);
    \draw [->,blue] (0.5+\i,-1+\j)--(1.5+\i,-1+\j);
    \draw [->,red] (2+\i,\j)--(3+\i,\j);
    \draw [->,blue] (2+\i,2+\j)--(3+\i,2+\j);
    \draw [->,green] (2+\i,4+\j)--(3+\i,4+\j);}
    
    %horizontal arrows on left half
    \foreach \i in {-9,-6,-3}
    \foreach \j in {-6,0,6}
    {\draw [<-,red] (0.5+\i,3+\j)--(1.5+\i,3+\j);
    \draw [<-,green] (0.5+\i,1+\j)--(1.5+\i,1+\j);
    \draw [<-,blue] (0.5+\i,-1+\j)--(1.5+\i,-1+\j);
    \draw [<-,red] (2+\i,\j)--(3+\i,\j);
    \draw [<-,blue] (2+\i,2+\j)--(3+\i,2+\j);
    \draw [<-,green] (2+\i,4+\j)--(3+\i,4+\j);}
    
    %upper half of \ell_2
    \foreach \i in {-2,-1,0,1,2}
    \foreach \j in {0,1,2,3}
    {\draw [->,red] (0.5+4.5*\i-1.5*\j,1+3*\i+3*\j)-- (0+4.5*\i-1.5*\j,2+3*\i+3*\j);
    \draw [->,green] (2+4.5*\i-1.5*\j,2+3*\i+3*\j)-- (1.5+4.5*\i-1.5*\j,3+3*\i+3*\j);
    \draw [->,blue] (-1+4.5*\i-1.5*\j,0+3*\i+3*\j)-- (-1.5+4.5*\i-1.5*\j,1+3*\i+3*\j);
    \draw [->,red] (2+4.5*\i-1.5*\j,4+3*\i+3*\j)-- (1.5+4.5*\i-1.5*\j,5+3*\i+3*\j);
    \draw [->,green] (3.5+4.5*\i-1.5*\j,5+3*\i+3*\j)-- (3+4.5*\i-1.5*\j,6+3*\i+3*\j);
    \draw [->,blue] (0.5+4.5*\i-1.5*\j,3+3*\i+3*\j)-- (0+4.5*\i-1.5*\j,4+3*\i+3*\j);}
    
    %lower half of \ell_2
    \foreach \i in {-3,-2,-1,0,1,2}
    \foreach \j in {-3,-2,-1}
    {\draw [<-,red] (0.5+4.5*\i-1.5*\j,1+3*\i+3*\j)-- (0+4.5*\i-1.5*\j,2+3*\i+3*\j);
    \draw [<-,green] (2+4.5*\i-1.5*\j,2+3*\i+3*\j)-- (1.5+4.5*\i-1.5*\j,3+3*\i+3*\j);
    \draw [<-,blue] (-1+4.5*\i-1.5*\j,0+3*\i+3*\j)-- (-1.5+4.5*\i-1.5*\j,1+3*\i+3*\j);
    \draw [<-,red] (2+4.5*\i-1.5*\j,4+3*\i+3*\j)-- (1.5+4.5*\i-1.5*\j,5+3*\i+3*\j);
    \draw [<-,green] (3.5+4.5*\i-1.5*\j,5+3*\i+3*\j)-- (3+4.5*\i-1.5*\j,6+3*\i+3*\j);
    \draw [<-,blue] (0.5+4.5*\i-1.5*\j,3+3*\i+3*\j)-- (0+4.5*\i-1.5*\j,4+3*\i+3*\j);}
    
    %upper half of \ell_3
    \foreach \i in {-2,-1,0,1,2}
    \foreach \j in {0,1,2,3}
    {\draw [->,red] (-1.5-4.5*\i+1.5*\j,1+3*\i+3*\j)-- (-1-4.5*\i+1.5*\j,2+3*\i+3*\j);
    \draw [->,blue] (-4.5*\i+1.5*\j,3*\i+3*\j)-- (0.5-4.5*\i+1.5*\j,1+3*\i+3*\j);
    \draw [->,green] (1.5-4.5*\i+1.5*\j,-1+3*\i+3*\j)-- (2-4.5*\i+1.5*\j,3*\i+3*\j);
    \draw [->,red] (1.5-4.5*\i+1.5*\j,1+3*\i+3*\j)-- (2-4.5*\i+1.5*\j,2+3*\i+3*\j);
    \draw [->,blue] (3-4.5*\i+1.5*\j,3*\i+3*\j)-- (3.5-4.5*\i+1.5*\j,1+3*\i+3*\j);
    \draw [->,green] (4.5-4.5*\i+1.5*\j,-1+3*\i+3*\j)-- (5-4.5*\i+1.5*\j,3*\i+3*\j);}
    
    %lower half of \ell_3
    \foreach \i in {-2,-1,0,1,2}
    \foreach \j in {-3,-2,-1}
    {\draw [<-,red] (-1.5-4.5*\i+1.5*\j,1+3*\i+3*\j)-- (-1-4.5*\i+1.5*\j,2+3*\i+3*\j);
    \draw [<-,blue] (-4.5*\i+1.5*\j,3*\i+3*\j)-- (0.5-4.5*\i+1.5*\j,1+3*\i+3*\j);
    \draw [<-,green] (1.5-4.5*\i+1.5*\j,-1+3*\i+3*\j)-- (2-4.5*\i+1.5*\j,3*\i+3*\j);
    \draw [<-,red] (1.5-4.5*\i+1.5*\j,1+3*\i+3*\j)-- (2-4.5*\i+1.5*\j,2+3*\i+3*\j);
    \draw [<-,blue] (3-4.5*\i+1.5*\j,3*\i+3*\j)-- (3.5-4.5*\i+1.5*\j,1+3*\i+3*\j);
    \draw [<-,green] (4.5-4.5*\i+1.5*\j,-1+3*\i+3*\j)-- (5-4.5*\i+1.5*\j,3*\i+3*\j);}

    \draw [dashed,-] (0,-10)--(0,10);
    \draw [dashed,-] (-13.5,9)--(13.5,-9);
    \draw [dashed,-] (-13.5,-9)--(13.5,9);
    \node [fill=white] at (-.1,-7) {$H_1$};
    \node [fill=white] at (9.5,-6.3) {$H_0$};
    \node [fill=white] at (-8,-5.5) {$H_2$};
    \node at (0,0) {$\bigstar$};
    
    %insert dots
    \foreach \x in {-9,-6,-3,0,3,6,9}
    \foreach \y in {-12,-6,0,6,12}
    {\node at (\x+2,\y) {$\cdot$};
    \node at (\x+.5,\y-3) {$\cdot$};
    \node at (\x+.5,\y+1) {$\cdot$};
    \node at (\x+1.5,\y+1) {$\cdot$};
    \node at (\x,\y+2) {$\cdot$};
    \node at (\x-1,\y+2) {$\cdot$};
    \node at (\x+.5,\y-3) {$\cdot$};
    \node at (\x+.5,\y+5) {$\cdot$};
    \node at (\x+1.5,\y+5) {$\cdot$};
    \node at (\x,\y+4) {$\cdot$};
    \node at (\x-1,\y+4) {$\cdot$};}
    
    %insert circles
    \foreach \x in {-9,-6,-3,0,3,6,9}
    \foreach \y in {-12,-6,0,6,12}
    {\node at (\x,\y) {$\circ$};
    \node at (\x+1.5,\y+3) {$\circ$};}
    
    %insert bullets
    \foreach \x/\y/\label in {1.5/3/c, 4.5/3/a, 0/6/b, 3/6/2c, 6/6/a+c, 9/6/2a, 1.5/9/b+c, 4.5/9/a+b=3c, 7.5/9/b+2c}
    {\node at (\x-.2,\y-.3) {$\label$};
    \node at (\x,\y) {$\bullet$};}
    
  \end{tikzpicture}}
\caption{The affine nilCoxeter algebra of type $\tilde A_2$}\label{fig:A2tilde}
\end{figure}

The commutative subalgebra $C$ may be described as follows.
The quotient by the prime $\mfp_0$ has a basis
consisting of the images of the $Z_v$ of
Section~\ref{se:C} 
where $v=(r_1,r_2,r_3)$ satisfies $r_1\le
r_2\le r_3$. Let
\[ a=(-2,1,1),\qquad b=(-1,-1,2),\qquad c=(-1,0,1) \]
and notice that $a+b=3c$. 
Then 
\[
Z_a=Y_{120}^2, \qquad
Z_b=Y_{210}^2, \qquad
Z_c=Y_{1210}=Y_{2120}.
\] In Figure \ref{fig:A2tilde}, the reflection about the line $H_i$ indicate the action of the symmetric group $\sym{3}$ on $C$ given by the transposition $\bar w_i$ for $i=0,1,2$. For example, $\bar w_1Z_a=Y_{201}^2$. The images of these generate $C/\mfp_0$, and satisfy $Z_aZ_b=Z_c^3$.
These and their images under $\sym{3}$ generate $C$. There are six minimal
primes in $C$, which are the six images of $\mfp_0$ under
$\sym{3}$.

The centre $R=C^{\sym{3}}$ is spanned by the elements
\begin{align*} 
\alpha=\hat Z_a&=Y_{120}^2+Y_{201}^2+Y_{012}^2
=(Y_{120}+Y_{201}+Y_{012})^2, \\
\beta=\hat Z_b&=Y_{210}^2+Y_{021}^2+Y_{102}^2
=(Y_{210}+Y_{021}+Y_{102})^2, \\
\gamma=\hat Z_c&=Y_{1210}+Y_{2021}+
Y_{0102}+Y_{0121}+Y_{1202}+Y_{2010}\\
&=(Y_{12}+Y_{20}+Y_{01})(Y_{21}+Y_{02}+Y_{10}). 
\end{align*}
The factors in the factorisation of $\gamma$ here commute.
We have 
\[ Z(A)=R=k[\alpha,\beta,\gamma]/(\alpha\beta-\gamma^3). \]
This is a normal integral domain of Krull dimension two. According to 
Theorems~\ref{th:R} and \ref{th:mu_n}, it can be identified with the ring
$k[X_1^3,X_2^3,X_1X_2]=k[X_1,X_2]^{\mu_3}$. 

Now let $M$ be a simple $A$-module. There are four cases for the
nature of the central character $\chi:R\to R/\mfm=\tilde k$ associated to $M$
(Definition~\ref{def:central-character}): 
\begin{enumerate}
\item $\alpha$ and $\beta$ both go to zero, 
\item $\beta$ goes to zero but $\alpha$ does not,
\item $\alpha$ goes to zero but $\beta$ does not,
\item neither $\alpha$ nor $\beta$ goes to zero.
\end{enumerate}
Note that $\gamma$ goes to zero in the first three cases but not in
the last case.

In case (1), where $\alpha$ and $\beta$ both go to zero, this corresponds to the
origin in $\Spec(R)$, which is the singular point. 
This is the zero central character, and the only simple module 
in this case is the trivial module $k$. By a Gr\"obner--Shirshov basis computation using Y{\i}lmaz, \"Ozel and
Ustao\u{g}lu~\cite[Theorem~21]{Yilmaz/Ozel/Ustaoglu:2014a}, and checked
by {\sc Magma} in several characteristics for reassurance,
$A/(\alpha,\beta,\gamma)A$ is a local
algebra of dimension $60$ whose
socle series and radical series are equal, with
layers of dimension 
\[ 1,\quad 3,\quad 6,\quad 9,\quad 11,\quad 12,
\quad 10,\quad 6,\quad 2. \]
In particular, this algebra is not self-injective. We shall see that
this contrasts with 
the algebras $A/\mfm A$ for $\mfm$ not equal to the origin.

In case (4), where $\gamma$ does not go to zero, the weights of $M$ form a regular orbit under the action of $\sym{3}$, so we are in the
situation of Theorem~\ref{th:n!}. So $M$ has six weight spaces, all of
dimension one over $\tilde k$, and $M$ is a $6$-dimensional $\tilde
k$-module. In this case, $A/\mfm A\cong \Mat_6(\tilde k)$, see
Remark~\ref{rk:Azumaya-locus}. So case (4) corresponds to the Azumaya
locus of $A$.

This leaves cases (2) and (3). These are similar, so we shall treat case (2). In this case, the stabilisers of the weights are conjugates of $\sym{2}=\langle\bar w_2\rangle$. Thus there are three
weight spaces, permuted transitively by $\sym{3}$.

Let us examine the general $A$-module $M$ on which $R$ acts via a
central character that is non-zero on $\alpha$ and zero on $\beta$ and
$\gamma$, so that there are three weight spaces lying above the
central character, permuted transitively by $\sym{3}$.

Let $M_1$ be the weight space stabilised by $\bar w_2=(2, 3)$ so that
$M_1$ corresponds to $C/\mfp_0$, acted on by $Z_a$, $Z_b$ and
$Z_c$. Thus $Z_a$ acts on $M_1$ as a non-zero scalar $\lambda$,
and $Z_b$ and $Z_c$ act as zero. Since $Y_1Z_a=0$, we have $Y_1M_1=0$. By Lemma~\ref{le:permute-weights}, we have $Y_2M_1\subseteq M_1$ and
$Y_0M_1\subseteq M_0$, where $M_2$ is the image of $M_1$ under $\bar w_1=(1,2)$, namely the weight space stabilised by $\bar w_0=(1,3)$, and $M_0$ is the image of $M_1$ under $\bar w_0$, namely the weight space stabilised by $\bar w_1$. Then for $i=0,1,2$, read modulo 
three, we have $Y_iM_i=0$, 
$Y_{i+1}M_i\subseteq M_i$, and $Y_{i-1}M_i\subseteq M_{i-1}$. The maps $Y_{i+1}\colon M_i\to M_i$
square to zero. Since $\hat Z_a$ acts as multiplication by $\lambda$, the maps $Y_{i-1}\colon M_i\to M_{i-1}$ are isomorphisms. We can use the
isomorphisms $M_2\xrightarrow{Y_1}M_1\xrightarrow{Y_0}M_0$ to adjust
bases so that they are the identity, and then $Y_2\colon M_0\to M_2$
acts as an automorphism $u$ that squares to multiplication by
$\lambda$. Next, since $Y_{010}=Y_{101}$, the action $v$ of $Y_0$ on $M_2$
is the same as the action of $Y_1$ on $M_0$ via these identities, and
this action squares to zero. Now
the central element $\gamma$ has to act zero. It acts on $M_1$ as 
$Y_{1210}+Y_{1202}$. Since $Y_{12}\colon M_0\to M_1$ is the isomorphism $u$, it follows that $Y_{10}+Y_{02}\colon M_1\to M_0$ acts
as zero. Therefore $Y_2$ acts as $-v$ on $M_1$. Finally, since
$Y_{212}=Y_{121}\colon M_0\to M_1$, we have $uv=-vu$. Thus we have the
following diagram of weight spaces for any module with central
character of type (2).

{\small
\begin{equation}\label{eq:3spaces} 
\vcenter{\xymatrix@R=20mm@C=10mm{M_2\ar[rr]^{Y_{1}}_I\ar@(l,u)[]^{Y_0}_v & 
& M_1\ar[dl]^{Y_{0}}_I\ar@(u,r)[]^{Y_2}_{-\!v\!} \\
&M_0\ar[ul]^{Y_{2}}_{u}\ar@(dr,dl)[]^{Y_1}_v}}
\end{equation}
}%
Let $U$ be the vector space obtained by identifying $M_0$, $M_1$ and
$M_2$ using the above identity maps. Then $U$ is a representation of
the algebra
\[ B=\tilde k\langle u,v\rangle/( u^2-\lambda,v^2,uv+vu), \]
Conversely, given any representation $U$ of the algebra $B$, we obtain
an $A$-module with the correct central character via the 
diagram~\eqref{eq:3spaces}, using three copies of $U$.

It follows that we have $A/\mfm A\cong \Mat_3(B)$ via the isomorphism
\[ Y_0 \mapsto \left(\begin{smallmatrix}
0&1&0\\ 0&0&0 \\ 0&0&v 
\end{smallmatrix}\right)\qquad 
Y_1 \mapsto \left(\begin{smallmatrix}
v&0&0\\ 0&0&1 \\ 0&0&0 
\end{smallmatrix}\right)\qquad 
Y_2 \mapsto \left(\begin{smallmatrix}
0&0&0\\ 0&-v&0 \\ u&0&0 
\end{smallmatrix}\right) \]
and $B$ is the basic algebra of $A/\mfm A$.

There are now
three cases for the structure of $B$ and hence the representation
theory of the corresponding block, according to
the field $\tilde k$. First suppose that $\lambda$ is a square in
$\tilde k$.
If the characteristic is not two, then the block has
two simple modules of dimension three,  corresponding to the
two square roots of $\lambda$, as follows:
\[ \xymatrix{M_2\ar[r]_{Y_1}& M_1\ar[r]_{Y_0}&
    M_0\ar[r]_{Y_2}^{\pm\sqrt\lambda}& M_2}. \]
Here, the maps not labelled by a 
scalar act as the identity.
The two distinct simple modules $M$ and $M'$ extend each
other to make projective indecomposable $A/\mfm A$-modules of dimension six over $\tilde
k$. These are as follows.
\[ \xymatrix{M'_2\ar[r]^{Y_1}\ar[d]_{Y_0}&
M'_1\ar[r]^{Y_0}\ar[d]_{Y_2}^{-1}&
M'_0\ar[r]^{Y_2}_{\mp\sqrt\lambda}\ar[d]_{Y_1} & M'_2\ar[d]_{Y_0} \\
M_2\ar[r]_{Y_1} &M_1\ar[r]_{Y_0}& M_0\ar[r]_{Y_2}^{\pm\sqrt\lambda}&M_2} \]
with the left side identified with the right side. Here, all the six
spaces are isomorphic to $\tilde k$.

In characteristic two, there is only one simple module of dimension
three, corresponding to the one square root of $\lambda$, 
and its projective cover has length four and Loewy length three.

If $\lambda$ is not a square in $\tilde k$, in any characteristic, then there is
one simple module, of dimension three over $\tilde k(\sqrt\lambda)$,
and it extends itself to form the projective indecomposable. In all three cases, the
block has dimension $4 \times 9 = 36$.\bigskip

This completes the verification of Conjecture~\ref{conj:multinomial}
in the case of $\mcN(\tilde A_2)$.

\section{\texorpdfstring{Example: The case $\tilde A_3$}
{Example: The case Ã₃}}

The nilCoxeter algebra $A$ of type $\tilde A_3$ has four generators,
$Y_0$, $Y_1$, $Y_2$, $Y_3$, subject to the relations
\begin{gather*} Y_0^2=Y_1^2=Y_2^2=Y_3^2=0,\quad Y_{010}=Y_{101},\quad
Y_{121}=Y_{212},\\ Y_{323}=Y_{232},\quad
Y_{030}=Y_{303},\quad Y_{02}=Y_{20},\quad Y_{13}=Y_{31}. 
\end{gather*}

The commutative subalgebra $C$ may be described as follows. The
quotient by the prime ideal $\mfp_0$ has a basis consisting of the
images of the $Z_v$ of Section~\ref{se:C} where
$v=(r_1,r_2,r_3,r_4)$ satisfies $r_1\le r_2\le r_3\le r_4$. Let 
$a=(-3,1,1,1)$, $b=(-1,-1,1,1)$, $c=(-1,-1,-1,3)$, $d=(-2,0,1,1)$,
$e=(-1,-1,0,2)$, $f=(-1,0,0,1)$. The corresponding elements of $C$ are 
\begin{align*}
Z_a&=Y_{1230}^3&Z_d&=Y_{21230}^2\\
Z_b&=Y_{2310}^2& Z_e&=Y_{23210}^2\\
Z_c&=Y_{3210}^3&Z_f&=Y_{123210}=Y_{321230} 
\end{align*}
The ring $C/\mfp_0$ is generated by the images of $Z_a,\dots,Z_f$, so
that it is isomorphic to
\[ k[Z_a,Z_b,Z_c,Z_d,Z_e,Z_f]/(Z_d^2-Z_aZ_b,
Z_e^2-Z_bZ_c,Z_f^4-Z_aZ_c,Z_bZ_f^2-Z_dZ_e). \]
We write $\alpha,\beta,\gamma,\delta,\ep,\phi$ for the
orbit sums $\hat Z_a,\dots,\hat Z_f$ under $\sym{4}$, satisfying the same
relations. 
\begin{align*} 
\alpha=\hat Z_a &=Y_{1230}^3+Y_{2301}^3+Y_{3012}^3+Y_{0123}^3
=(Y_{1230}+Y_{2301}+Y_{3012}+Y_{0123})^3\\
\beta=\hat Z_b
&=Y_{2310}^2+Y_{3021}^2+Y_{0132}^2+Y_{1203}^2+Y_{3120}^2+Y_{2031}^2\\
\gamma=\hat Z_c&=Y_{3210}^3+Y_{0321}^3+Y_{1032}^3+Y_{2103}^3
=(Y_{3210}+Y_{0321}+Y_{1032}+Y_{2103})^3\\
\delta=\hat Z_d&=Y_{21230}^2+Y_{32301}^2+Y_{03012}^2+Y_{10123}^2
+Y_{12320}^2+Y_{23031}^2+Y_{30102}^2+Y_{01213}^2\\
&\quad+Y_{12030}^2+Y_{23101}^2+Y_{30212}^2+Y_{01323}^2\\
\ep=\hat Z_e&=Y_{23210}^2+Y_{30321}^2+Y_{01032}^2+Y_{12103}^2
+Y_{32120}^2+Y_{03231}^2+Y_{10302}^2+Y_{21013}^2\\
&\quad+Y_{32010}^2+Y_{03121}^2+Y_{10232}^2+Y_{21303}^2\\
\phi=\hat  Z_f&=Y_{123210}+Y_{230321}+Y_{301032}+Y_{012103}
+Y_{012321}+Y_{123032}+Y_{230103}+Y_{301210}\\
&\quad+Y_{212030}+Y_{323101}+Y_{030212}+Y_{101323}
\end{align*}
Then we have
\[ Z(A)=R=k[\alpha,\beta,\gamma,\delta,\ep,\phi]/
(\delta^2-\alpha\beta,\ep^2-\beta\gamma,\phi^4-\alpha\gamma,
\beta\phi^2-\delta\ep). \]
This is a normal integral domain of Krull dimension three. By
Theorem~\ref{th:R}, it can be thought
of as the ring 
\[ k[X_1^4,X_2^2,X_3^4,X_1^2X_2,X_2X_3^2,X_1X_3]
=k[X_1,X_2,X_3]^{\mu_4} . \]

The dense open subset of $\Spec(R)$ giving the Azumaya locus of $A$ is
the set of maximal ideals $\mfm$ such that all the variables
have non-zero images in $R/\mfm$. For these central characters, there is one
simple module of dimension $24$, and we have $A/\mfm
A\cong\Mat_{24}(\tilde k)$. At the other extreme, the maximal ideal
generated by all the variables corresponds to the one dimensional
trivial module. Between these extremes, the possible stabilisers of
the weight spaces are the subgroups $\sym{3}$, 
$\sym{2}\times\sym{2}$ and $\sym{2}$. In the following table, the
eight possibilities are determined by which ones of $\alpha$, $\beta$
and $\gamma$ have non-zero image in $R/\mfm$; we write $*$ to denotes
non-zero image in $R/\mfm$ and $0$ to denote zero image in $R/\mfm$:
\[ \begin{array}{|c|c|cccccc|}
\hline
\text{Stabiliser}&\text{\# Wt spaces}&\alpha&\beta&\gamma&\delta&\ep&\phi\\ \hline
\sym{4}&1&0&0&0&0&0&0\\
1\times\sym{3}&4&*&0&0&0&0&0\\
\sym{3}\times1&4&0&0&*&0&0&0\\
\sym{2}\times\sym{2}&6&0&*&0&0&0&0\\
1\times1\times\sym{2}&12&*&*&0&*&0&0\\
1\times\sym{2}\times 1&12&*&0&*&0&0&*\\
\sym{2}\times 1\times 1&12&0&*&*&0&*&0\\
1\times1\times1\times1&24&*&*&*&*&*&*\\ \hline
\end{array} \]
We shall investigate in detail the cases $\sym{4}$, $1\times \sym{3}$,
$\sym{2}\times\sym{2}$ and $1\times 1 \times \sym{2}$, and trivial
stabiliser. Cyclic 
permutations of the generators of $A$ then deal with the cases
$\sym{3}\times 1$, $1\times \sym{2} \times 1$,  $\sym{2}\times
1\times 1$. These cyclic permutations fix $C$ setwise, while
cyclically permuting the coordinates in vector notation. This changes
the choice of $\mfp_0$, but preserves the algebra structure of $A/\mfp
A$ and hence the structure of the basic algebra. 
For simplicity we shall assume that $k$ is algebraically closed.\bigskip

In the case with 
trivial stabiliser, there is not much to say, because $A/\mfm
A\cong\Mat_{24}(k)$. In the case with stabiliser $\sym{4}$, $A/\mfm A$ is a
local algebra with just one simple module, the trivial module. At
least in small characteristics (via {\sc Magma}), and probably in all characteristics,
the dimension of $A/\mfm A$ is $1440$, with radical layers of dimension
\[ 1,\ 4,\ 10,\ 20,\ 34,\ 52,\ 73,\ 96,\ 119,\ 140,\ 156,\ 164,\ 160,\
  142,\ 113,\ 78,\ 46,\ 22,\ 8,\ 2, \]
 but the
radical layers are not equal to the socle layers, which have dimension 
(in reverse order to match the above)
\[  1 ,\ 4,\ 10,\ 20,\ 33,\ 48,\ 64,\ 80,\ 96,\ 112,\ 128,\ 
144,\ 155,\ 156,\ 142,\ 112,\ 75,\ 40,\ 16,\ 4. \]  
We shall not say more about its
structure.\bigskip

Let us next consider the case with stabiliser $1\times\sym{3}$, where
$\alpha=\hat Z_a$ has non-zero image in $R/\mfm$, and the remaining variables
have zero image. Then 
\[ \mfm=(\hat Z_a-\lambda,\hat Z_b,\hat Z_c,\hat Z_d,\hat
  Z_e,\hat Z_f)\]
for some non-zero constant $\lambda\in k$. Let $M_i$ be the weight space stabilised by the
conjugate of $\sym{3}$ in $\sym{4}$ that stabilises $i$. Then $M_1$
corresponds to $C/\mfp_0$.  We have
$Y_1Z_a=0$, so $Y_1M_1=0$. By Lemma~\ref{le:permute-weights}, we have
$Y_2M_1\subseteq M_1$, $Y_3M_1\subseteq M_1$, and 
$Y_0M_1\subseteq M_0$. We obtain a diagram as follows, by 
using the braid relations to relate the self-maps at the four vertices.
\[ \vcenter{\xymatrix@=20mm{
M_3\ar[r]^{Y_{2}}_I\ar@(dl,ul)[]^{Y_1}_{v}\ar@(ul,ur)[]^{Y_0}_{w}  
& M_2\ar[d]^{Y_{1}}_I\ar@(ul,ur)[]^{Y_0}_{w}\ar@(ur,dr)[]^{Y_3}_{x} \\
M_4\ar[u]^{Y_3}_{u}\ar@(dr,dl)[]^{Y_2}_{v}\ar@(dl,ul)[]^{Y_1}_{w}
&M_1\ar[l]^{Y_{0}}_{I}\ar@(ur,dr)[]^{Y_3}_{x}\ar@(dr,dl)[]^{Y_2}_{v}}} \]
Then $u^3$ acts as multiplication by the scalar $\lambda$, so $u$ is an isomorphism. By the braid relations we also have $uv=xu$, $uw=vu$, and $ux=wu$.
Now the action of $\hat Z_d$ on $M_1$
has to be zero. All terms in $\hat Z_d$ except for $Y_{21230}^2$,
$Y_{12320}^2$ and $Y_{12030}^2$
already act as zero, and these terms acts as $(vu)^2+(uv)^2+(wu)^2=(vx+xw+wv)u^2$. Since $u$ is
an isomorphism, we deduce that $vx+xw+wv=0$. Similarly, $\hat Z_f$ acts on $M_1$ as $uvw+uxv+uwx$, so
we have $vw+xv+wx=0$. Since $\hat Z_a$ acts invertibly, $\hat Z_d$
and $\hat Z_f$ acting as zero already force $\hat Z_b$ and $\hat Z_c$
to act as zero. In the algebra
\begin{multline*} 
B=k\langle u,v,w,x\rangle
/(u^3-\lambda,v^2,w^2,x^2, uv-xu,uw-vu,ux-wu,\\
vx+xw+wv,vw+xv+wx) 
\end{multline*}
the action of $\hat Z_e$ is already zero. The
ideal in $B$ generated by $v$, $w$ and $x$ is nilpotent. It follows that for
a simple $A/\mfm A$-module, $v$, $w$ and $x$ act as zero. So the
simple $A/\mfm A$-modules are four dimensional.

Thus $B$ is the basic algebra of $A/\mfm A$, and $A/\mfm A\cong
\Mat_4(B)$. As long as the characteristic is not three, the algebra $B$ has dimension $36$, is self-injective, with three
simple modules corresponding to the cube roots of $\lambda$. The
radical layers of the
projective indecomposable $B$-modules have dimensions $1$, $3$, $4$,
$3$, $1$. Thus $A/\mfm A$ has dimension $16 \times 36 = 576=4!^2$.\bigskip

Next we consider the case with $\sym{2}\times\sym{2}$
stabiliser. This corresponds to a maximal ideal of the form
\[ \mfm = (\hat Z_a,\hat Z_b-\lambda,\hat Z_c,\hat Z_d,\hat Z_e,\hat Z_f)\] with $0\neq \lambda \in k$. The diagram of $6$ weight spaces (corresponding to the row standard tabloids of shape $(2,2)$) labelled with unordered pairs in this case is as follows. 
\[ \xymatrix@R=18mm@C=8mm{&&&M_{13}\ar[dlll]_{Y_2}\ar[dr]^(.6){Y_0}\\
M_{12}\ar[drrr]_{Y_0}\ar@(d,l)[]^{Y_1}_v\ar@(l,u)^{Y_3}_x&&
M_{23}\ar[ur]^(.4){Y_1}\ar@(d,l)^(.6){Y_0}_y\ar@(l,u)^(.4){Y_2}_v&&
M_{34}\ar[dl]^(.4){Y_2}\ar@(u,r)^(.6){Y_1}_y\ar@(r,d)^(.4){Y_3}_w&&
M_{14}\ar[ulll]_(.55){Y_3}^(.55)u\ar@(u,r)^{Y_0}_v\ar@(r,d)^{Y_2}_y\\
&&&M_{24}\ar[ul]^(.6){Y_3}_u\ar[urrr]_{Y_1}} \]
with $u^2=\lambda$, $v^2=0$, $w^2=0$, $x^2=0$, $y^2=0$, $uv=wu$, $vu=uw$, $xu=uy$, $yu=ux$, $xv=vx$, $xw=wx$,
$yv=vy$, $yw=wy$. The fact that $\hat Z_d$, $\hat Z_e$ and $\hat
Z_f$ act as zero on $M_{12}$ imply that $vw+wv=0$, $xy+yx=0$ and
$(v+w)(x+y)=0$, and also imply that $\hat Z_a$ and $\hat Z_c$ act as
zero. So we have $A/\mfm A\cong \Mat_6(B)$, where 
\begin{multline*} 
B = k\langle  u,v,w,x,y\rangle/(u^2-\lambda,v^2,w^2,x^2,y^2,
uv-wu,uw-vu,ux-yu,uy-xu,\\
vx-xv,wx-xw,vy-yv,wy-yw,vw+wv,xy+yx,(v+w)(x+y)) 
\end{multline*}
The generators $v$, $w$, $x$ and $y$ generate a nilpotent ideal in
$B$, so they act as zero on any simple $A/\mfm A$-module. As long as the characteristic is not two, there are
two simple $B$-modules, and therefore there are two six-dimensional simple $A$-modules.
The radical layers of the projective
indecomposable $B$-modules have dimensions $1$, $4$, $5$, $2$. 
Thus $A/\mfm A$ has dimension $2\times 12 \times 36 = 864 > 4!^2$ and it is not self-injective.

This and the $\sym{4}$ stabiliser case are the
only non self-injective cases in type $\tilde A_3$.\bigskip

Next, the case with stabiliser $1\times 1 \times \sym{2}$. This
corresponds to a maximal ideal of the form
\[ \mfm=(\hat Z_a-\lambda,\hat Z_b-\mu,\hat Z_c,\hat
  Z_d-\nu,\hat Z_e,\hat Z_f) \]
with $\nu^2=\lambda\mu$. We obtain  the
following diagram by using the braid relations to relate the self-maps
at the vertices. In  this diagram of 12 weight spaces (corresponding to the row standard tabloids of shape $(1,1,2)$) labelled by ordered pairs, the left and right sides are identified.
\[ \xymatrix@=9mm{ &
M_{12}\ar[dr]^{Y_0}\ar@(ur,ul)[]_{Y_3}^v&&
M_{41}\ar[dr]^{Y_3}_\mu\ar@(ur,ul)[]_{Y_2}^w&&
M_{21}\ar[dr]^{Y_0}\ar@(ur,ul)[]_{Y_3}^w\ar@/_8ex/[llll]_(.6){Y_1}^(.6)u&&
M_{14}\ar[dr]^{Y_3}\ar@(ur,ul)[]_{Y_2}^v\ar@/_8ex/[llll]_(.4){Y_0}^(.4)u\\
M_{13}\ar[ur]^{Y_2}\ar[dr]_{Y_0}&&
M_{42}\ar[ur]^{Y_1}\ar[dr]_{Y_3}^\mu&&
M_{31}\ar[ur]^{Y_2}\ar[dr]_{Y_0}&&
M_{24}\ar[ur]^{Y_1}\ar[dr]_{Y_3}&&
M_{13}\\ &
M_{43}\ar[ur]_{Y_2}\ar@(dl,dr)[]_{Y_1}^w\ar@/_8ex/[rrrr]_(.4){Y_3}^(.4){\mu u}&&
M_{32}\ar[ur]_{Y_1}\ar@(dl,dr)[]_{Y_0}^v\ar@/_8ex/[rrrr]_{Y_2}^u&&
M_{34}\ar[ur]_{Y_2}\ar@(dl,dr)[]_{Y_1}^v&&
M_{23}\ar[ur]_{Y_1}\ar@(dl,dr)[]_{Y_0}^w} \]
with $u^2=\nu/\mu^2$, $uv=wu$, $uw=vu$. The vanishing of $\hat Z_f$
then forces $w=-v$, so we have $uv=-vu$. If we add this relation, then
$\hat Z_c$ and $\hat Z_e$ vanish and we are done. Thus the basic algebra is
\[ B=k\langle u,v\rangle/(u^2-\nu/\mu^2, v^2, uv+vu) \]
of dimension four. We have $A/\mfm A\cong \Mat_{12}(B)$, of dimension
$4\times 12^2=576=24^2$.\bigskip

This completes the verification of Conjecture~\ref{conj:multinomial}
in the case of $\mcN(\tilde A_3)$.

\bibliographystyle{amsplain}
\bibliography{../repcoh}

\end{document}